\theoremstyle{Theorem}
\newtheorem{theorem}{Theorem} [section]
\newtheorem{proposition}[theorem]{Proposition} 
\newtheorem{claim}[theorem]{Claim} 
\newtheorem{lemma}[theorem]{Lemma}
\newtheorem{corollary}[theorem]{Corollary}
\theoremstyle{definition}
\newtheorem{remark}[theorem]{Remark}
\theoremstyle{remark}
\newlist{enumlemma}{enumerate}{3} 
\setlist[enumlemma]{label*=\emph{(\roman*)}, ref= {\emph{(\roman*)} }}
\newlist{enum2}{enumerate}{3} 
\setlist[enum2]{label*={(\alph*)}, ref= {(\alph*)} }
\def\DPP{d}
\newcommand{\hol}[1][\beta]{ \, \mathrm{H\ddot{o}l}^{{#1}}}
\newcommand{\lochol}[1][\beta]{ \, \mathrm{H\ddot{o}l}^{{#1}}_{\mathrm{loc}}}
\def\Hol{\hol}
\newcommand{\Lip}{\mathrm{Lip}}
\def\lip{\Lip}
\renewcommand{\bar}{\overline}
\newcommand{\R}{\mathbb {R}}
\newcommand{\Z}{\mathbb {Z}}
\newcommand{\N}{\mathbb {N}}
\newcommand{\inv}{^{-1}}
\newcommand{\td}{\tilde}
\title[Smoothness of  stable holonomies]{Smoothness of  stable holonomies inside  center-stable manifolds and the $C^2$ hypothesis in Pugh--Shub and Ledrappier--Young theory} 
\date{\today}
\author[A.~Brown]{Aaron Brown}
\address{University of Chicago, Chicago, IL 60637, USA}
\email{awb@uchicago.edu}
\long\def\symbolfootnote[#1]#2{\begingroup\def\thefootnote{\fnsymbol{footnote}}
\footnote[#1]{#2}\endgroup}
\begin{document}
%\symbolfootnote[0]{\it Preliminary version.  Last updated: \today}
\maketitle
\begin{abstract}
Under  a suitable bunching condition, we establish  that stable holonomies inside center-stable manifolds   for $C^{1+\beta}$ diffeomorphisms are uniformly bi-Lipschitz and in fact $C^{1+\text{H\"older}}$.  This verifies that the Pugh--Shub theory for ergodicity holds for suitably center-bunched, $C^{1+\beta}$, essentially accessible, partially hyperbolic diffeomorphism and verifies that the Ledrappier--Young  entropy formulas hold for $C^{1+\beta}$ diffeomorphisms of compact manifolds.  
\end{abstract}
%\section*{Questions}
%\begin{enumerate}
%\item Regularity of pseudo-stable manifolds when varying intervals
%\item $C^{1+\beta}$-norm of maps between submanifolds given by Holder size of metric
%\item $C^1$ versus $C^{1+\beta}$ estimates in Proposition \ref{prop:HPS} correct?  
%\end{enumerate}
\section{Introduction}
The purpose of this   paper is twofold.  First, in \cite{MR2630044}, Burns and Wilkinson establish the ergodicity (and $K$-property) of  partially hyperbolic, center-bunched, essentially accessible $C^2$-diffeomorphisms.  This extends a number of earlier results including \cite{MR1298715} and \cite{MR1750453}.   A similar result (with stronger center-bunching conditions) was announced for $C^{1+\delta}$-diffeomorphisms.  However, it seems that the bunching condition  given in \cite[Theorem 0.3]{MR2630044} is possibly too weak for the method of proof.  % to work.  %Moreover, while the outline of 
A proof of the technical result needed to establish \cite[Theorem 0.3]{MR2630044}
was circulated as an unpublished note in \cite{Burns_anote}.  It seems some of the details of the proof in \cite{Burns_anote} are incorrect.   
% without
%some details in \cite{Burns_anote} are incorrect.  
%We correct here the proof \cite{Burns_anote} in this note, 
%The technical results needed to establish \cite[Theorem 0.3]{MR2630044} was circulated in \cite{Burns_anote}.  
We correct the details of the proof of the main result in  \cite{Burns_anote} and obtain a proof of \cite[Theorem 0.3]{MR2630044} under somewhat stronger bunching hypotheses.  
%We  correct here the proof in  \cite{Burns_anote} and  

Secondly, in two seminal papers \cite{MR819556, MR819557} Ledrappier and Young prove remarkable results relating the metric entropy of a $C^2$ diffeomorphism $f\colon M\to M$  of a compact manifold $M,$ its Lyapunov exponents, and the geometry of conditional measures along unstable manifolds.  In \cite{MR819556},  the SRB property of   measures satisfying the Pesin entropy formula  is established   for $C^2$ diffeomorphisms and   measures  with zero Lyapunov exponents.  This extends Ledrappier's result from \cite{MR743818} which established the SRB property for hyperbolic measures  invariant under $C^{1+\beta}$ diffeomorphisms satisfying the Pesin entropy formula. In \cite{MR819557}, a more general formula for the entropy of arbitrary measures invariant under a $C^2$ diffeomorphism is derived.  

 As remarked in  \cite{MR819556}, there is one crucial step in which the $C^2$ hypothesis rather than the $C^{1+\beta}$ hypothesis on the dynamics is used: establishing the Lipschitzness of unstable holonomies inside center-unstable sets. %which is needed to showing that the entropy of $f$ is carried entirely by the unstable manifolds.  
 In \cite{MR819557}, the corresponding estimate is the Lipschitzness of the holonomies along intermediate unstable foliations inside the total unstable manifolds.  
In the case of hyperbolic measures, the entropy formula from \cite{MR819557} is known to hold for $C^{1+\beta}$ diffeomorphisms as it is sufficient to establish the Lipschitzness of  $W^i$ holonomies inside the $W^{i+1}$ manifolds (corresponding to Lyapunov exponents $\lambda_i>\lambda_{i+1} >0$) on Pesin sets.  This Lipschitzness of holonomies along  intermediate unstable manifolds was established in \cite[Appendix]{MR1709302}.   
However, the proof in \cite[Appendix]{MR1709302} fails to establish Lipschitzness of unstable holonomies     inside center-unstable sets which is needed to show  the main result of \cite{MR819556}:  that the entropy of $f$ is ``carried entirely by the unstable manifolds.''
The results of this note establish the  Lipschitzness of unstable holonomies     inside center-unstable sets 
which confirms that the results of  \cite{MR819556, MR819557} hold  for $C^{1+\beta}$ diffeomorphisms and invariant measures with zero Lyapunov exponents.  

In Section \ref{Sec:2} we   establish under certain bunching conditions that the stable holonomies for certain globalized $C^{1+\beta}$  dynamics  are Lipschitz and, in fact, $C^{1+\text{H\"older}}$.    
The main result, Theorem \ref{thm:difflholonomies}, is formulated  in a sufficiently abstract setting  so that it may be applied to a number of settings.   In Sections \ref{sec:3} and  \ref{sec:4} we briefly justify how the results discussed above   follow from Theorem \ref{thm:difflholonomies}.

%\section{$C^{1+\text{H\"older}}$ holonomies inside center-stable manifolds }
%In this section we formulate and prove our main technical result.  The result is stated with sufficient abstraction so as to be applicable to a number of settings.  
\section{Statement of main theorem}
\label{sec:1}
\subsection{Setup}
%Fix constants $\eta'>\kappa'>0$.  
For every $n\in \Z$, let $$L_n = \left(\begin{array}{ccc}A_n & 0 & 0 \\0 & B_n & 0 \\0 & 0 & C_n\end{array}\right)$$ be an invertible linear map  where  each   $A_n$, $B_n$, and $C_n$ is a square matrix with dimension constant in $n$.  We assume the matrices are non-trivial  though the results can be formulated (with fewer conditions) in the case that  $C_n$ vanishes.
  We assume there are constants $$ -\mu<  \eta'_n<  \kappa'_n<   \gamma'_n\le \hat \gamma'_n<  \hat \kappa' _n<\hat \eta'_n <\mu $$ such that for every $n$
\begin{enumerate}
\item $e^{\eta '_n}\le m(A_n) \le \|A_n\| \le e^{\kappa '_n}$;
\item $e^{ \gamma'_n}\le m(B_n) \le \|B_n\| \le e^{ \hat\gamma'_n}$;  % {\red This isn't correct... either need an $\gamma\pm \epsilon$ or $\gamma,\hat \gamma$}

\item $e^{\hat \kappa'_n}\le m(C_n) \le \|C_n\| \le e^{ \hat \eta'_n}.$
\end{enumerate}
Here $\|\cdot \|$ is the operator norm induced by the standard   norm on the corresponding Euclidean spaces  and $m(A)$ denotes the conorm of $A$.  
Throughout, we will always assume that $\sup\{ \kappa'_n\}<0$. 
We do not impose any assumptions on the signs of $\gamma'_n$, $\hat \gamma'_n, \hat \kappa'_n$ and $\hat \eta'_n$.  % other than what follow from the above inequalities.  
%If $C_n$ is trivial, we may take any   $\hat \eta'_n=  \hat \kappa'_n>   \hat \gamma'_n +\bar \epsilon$ for any $\bar \epsilon>0$.  
We moreover assume  that   $$\inf \{ |\kappa'_n-\gamma'_n|, \hat \kappa'_n-\hat \gamma'_n ,     | \kappa_n'|  \}>0.$$
%With  \framebox{$0<\eps<{\red....}$} and any $\epsilon'< \eps/10.$
Take any $$0<\epsilon'\le \inf \{|\kappa'_n-\gamma'_n|,\hat \kappa'_n-\hat \gamma'_n , |\kappa'_n|, 1 \}/100$$ 
and let $\kappa_n= \kappa'_n+ 2\epsilon'$, $\hat \kappa_n=\hat  \kappa'_n- 2\epsilon'$, $\eta_n= \eta'_n- 2 \epsilon'$, $\hat\eta_n= \hat \eta'_n+ 2 \epsilon'$,  $ \gamma_n= \gamma'_n- 2\epsilon'$, $\hat  \gamma_n= \hat \gamma'_n+ 2\epsilon'$, and $\epsilon = 4\epsilon'.$
Given a fixed $0<\beta<1$   we moreover assume that $ \hat\gamma' _n-\gamma '_n$ and $\epsilon'$ are sufficiently small so that there exist   $0<\bar \theta<\beta $ satisfying   %\note{use global bound on $\eta'_n$}
\begin{equation}\label{eq:holder} 
\sup\dfrac{e^{\kappa_n}}{e^{\eta_n \bar \theta +\gamma_n}} <1 \text{ and } \sup\dfrac{e^{-\hat \kappa_n}}{e^{-\hat \eta_n \bar \theta -\hat \gamma_n}} <1
\end{equation}
 and  $\theta\le \bar \theta$  with %\begin{equation}\label{eq:bunch}
%{\red \max\left\{  \left(\frac{4(\hat \gamma - \gamma)}{\kappa \beta}\right)^{1/3},\left(\frac{4{(\hat \gamma - \gamma)}}{\kappa \beta^2}\right)^{1/2}   \right\}< \theta < \min\left\{ \frac{\kappa-\gamma -\epsilon}{\eta}, \frac{\hat \kappa-\hat \gamma - \epsilon}{\hat \eta}, \beta\right\}.}\end{equation}
\begin{equation}\label{eq:coarsbunch}
\sup\dfrac{e^{\beta \kappa_n}}{e^{\kappa_n \theta +\beta \gamma_n}}  <1\end{equation}
and 
\begin{equation}\label{eq:bunch}
\sup \frac{ \hat \gamma _n-   \gamma_n}{-\kappa_n }< \theta. \end{equation}
%and $\theta <\beta$ satisfies
%\begin{equation}\label{eq:holder}  \theta {  \eta_n}  \le {\kappa_n-\gamma_n -\epsilon}    \text {  and  }       {\hat \kappa_n-\hat \gamma _n +  \epsilon} \le   \theta {\hat \eta_n}  \end{equation}
%%\begin{equation}\label{} {\red  \theta {  \eta_n}  \ge  {\kappa_n-\gamma_n }    \text {  and  }       {\hat \kappa_n-\hat \gamma _n  } \ge   \theta {\hat \eta_n} } \end{equation}
%%
%\begin{equation}\label{eq:holder}  
%\sup\dfrac{e^{\kappa_n}}{e^{\eta_n \theta +\gamma_n}} <1 \text{ and } \sup\dfrac{e^{-\hat \kappa_n}}{e^{-\hat \eta_n \theta -\hat \gamma_n}} <1.  
%\end{equation}
%\note{right hand inequality is ok... didn't divide by any negatives... but issues with supremum} 
% \note{don't need the hat because only gonna prove for unstables}
Condition \eqref{eq:bunch} is a bunching condition.  Condition \eqref{eq:holder}  ensures the tangent distributions defined below are $\bar \theta$-H\"older.
Note that with $\theta =\bar \theta$, \eqref{eq:bunch}  is the bunching condition stated in \cite[Theorem 0.3]{MR2630044}.  Our proof however requires the extra bunching   imposed by  \eqref{eq:coarsbunch}.  Note from \eqref{eq:holder} that $\theta = \beta\bar \theta$ satisfies \eqref{eq:coarsbunch}.

We fix such $0<\theta\le \bar \theta<\beta$ for the remainder. 
Also fix $\alpha>0$ and $\hat \theta< \theta \le \bar \theta< \hat \beta<\beta$ for the remainder with     \begin{equation}\label{eq:3} %$$%\frac{ (1+\alpha)(\gamma _n- \hat \gamma_n)}{\kappa_n \beta}<
\sup \left\{ \frac{ (1+\alpha)(\hat\gamma _n-  \gamma_n)}{-\kappa_n }\right\}<\hat  \theta, \text{ and } 
\sup\left\{\dfrac{  \kappa_n}{\kappa_n- \gamma_n}\theta\right\}  <\hat \beta.\end{equation} 
Set $\bar \kappa = \sup  \{\kappa_n\} <0 $ and $$  \omega = \sup\{\kappa_n   \theta + (1+\alpha)(\hat\gamma _n-  \gamma_n)\}<0, \quad 
 \hat \omega = \sup\{\kappa_n\hat  \theta + (1+\alpha)(\hat\gamma _n-  \gamma_n)\}<0 .$$

%\note{Here is probably the only place where we use the upper bounds $\eta$.}
%Note that for a fixed $\beta>0$, taking $\epsilon'>0$ sufficiently small, we can ensure some  $\theta$ exists.

\def\PRN{\mathbb{PR}^N}
%\note{probably use the 3-split norm here} 

We decompose $\R^k$ into subvector spaces $\R^k= \R^s\oplus  \R^c\oplus  \R^u$ according to the block decomposition preserved by each $L_n$.
We let $\|\cdot\|$ denote the standard Euclidean norm on $\R^k$ %such that writing $v= v^u+v^c+v^s$ according to the decomposition $\R^k = \R^u\oplus \R^c\oplus \R^s$ we have $\|v\|= \max \{ \|v^u\|, \|v^c\|, \|v^s\|\}$ where $\|v^\star\|$ is the standard norm on $\R^\star$.  
and write $d$ for the induced  distance. %in $\R^k$ induced by the  norm $\|\cdot \|$. 
 Given a subspace $U\subset \R^k$ we write $S U$ for the unit sphere in $U$ relative to the Euclidean norm $\|\cdot\|$.   % and for any subspace let $d_{\P}(\cdot, \cdot)$ denote the  distance induced on $SU$ by the standard inner product.
%  Similarly, if $E\to \R^k$ is a vector bundle, $S E\to \R^k$ is its projectivization. 
 If $T\colon U\to V$ is linear we write $ T_*\colon S U\to S V$ for the induced map.  We recall that if $T\colon U\to V$ is a  linear isomorphism with  $a\le m(T)\le \|T\|\le  b$ then $ T_* $ is bi-Lipschitz %\note{\red This is true in both the angle metric and the projective metric as is shown by a simple trigonometry exercise} 
 with constants $b\inv a$ and  $ba\inv$.  
Finally, if $N\subset \R^k$ is an embedded submanifold we write $SN:= STN$ for the sphere bundle over $N$.  
Given $g\colon N_1\to N_2$ a diffeomorphism we write $g_*\colon SN_1 \to S N_2$ to be the renormalized derivative map $$g_*(x,v) = \left(g(x), \frac{1}{ \|D_xg v\| }D_xg(v)\right).$$

%Recall that 
In what follows, we consider    $C^{1+\beta}$ diffeomorphism $f\colon \R^k\to \R^k$ with uniform estimates: namely, viewing    $Df$ as a map from  $\R^k$ to the space of linear maps we assume  $ \sup _{x\in \R^k } \|D_x f\|<\infty$ and  that $Df$ is  H\"older continuous with $$\Hol(Df) := \sup_{x \neq y
 } \left\{\dfrac{ \| D_xf- D_y f \|} {d(x,y) ^\beta}\right\}
  <\infty .$$   
%  Note that this implies $Df$ satisfies  locally a  H\"older  condition:
%$$\lochol ( Df) := \sup_{%\substack{(x,v)\neq (y,u) \in SN_1  \\ 
%0<d\left((x,v), (y,u)\right) \le 1 %}
% }\left\{ \dfrac{ d\left(Df(x,v),Df (y,u)\right) }{d\left((x,v), (y,u)\right) ^\beta}\right\} <\infty$$
%where $d((x,u),(y,v))= \max\{d(x,y),(u,v)\}$ for $(x,u), (y,v)\in T\R^k$.  

%\sup_{x\neq y%: d\left(x,y\right) \le 1
% } \left\{\dfrac{ \| D_xf_n- D_y f_n \|} {d(x,y) ^\beta}\right\}
%\right\}  <\infty $  
%
%$Df$ sat
%\begin{enumerate} \item $\|f\|_{C^1} := \sup _{x\in \R^k } \|D_x f\|<\infty$ and 
%\item $ \|f\|_{C^{1+\beta}} := \max \left  \{\|f\|_{C^1} , 
%\sup_{x\neq y%: d\left(x,y\right) \le 1
% } \left\{\dfrac{ \| D_xf_n- D_y f_n \|} {d(x,y) ^\beta}\right\}
%\right\}  <\infty $  
%\end{enumerate}
%We set $\|f\|_{C^{1+\beta}} := \max \{\|f\|_{C^1} , \Hol (Df)\}
%We equip $T\R^k$ with the metric $$d\left((x,v), (y,u)\right) = \max\{d(x,y), d(v,u)\}.$$  
Given submanifolds $N_1$ and $N_2$ and a diffeomorphism $h\colon N_1\to N_2$ then, % we may similarly define the $C^1$ norm $\|h\|_{C^1}$ of $h$.  A
as  the linear maps $D_{x} h$ and $D_y h$ have different domains for $x\neq y\in N_1$, we define the H\"older variation of $Dh$ and $h_*$ as functions between metric spaces.  
Assuming $N_1$ has bounded diameter, define the   $\beta$-H\"older variation of $Dh \colon TN_1\to TN_2$ to be   $$\hol (D h) := \sup_{\substack{(x,v)\neq (y,u) \in SN_1  } %\\ d\left((x,v), (y,u)\right) \le 1}
 }\left\{ \dfrac{ d\left(Dh(x,v),Dh (y,u)\right) }{d\left((x,v), (y,u)\right) ^\beta}\right\} $$
 where, given $(x,v)$ and $(y,u)$ in $T\R^k$, we write 
	$$d\left((x,v), (y,u)\right) = \max\{d(x,y), d(v,u)\}.$$
Similarly define $\hol ( h_*)$.  
% where,  given $(x,v)$ and $(y,u)$ in $T\R^k$, we write 
%	$$d\left((x,v), (y,u)\right) = \max\{d(x,y), d(v,u)\}.$$
%We similarly define  
%$\hol ( Dh ) :=  \sup_{\substack{(x,v)\neq (y,u) \in SN_1 } %\\  d\left((x,v), (y,u)\right) \le 1}
% }\left\{ \dfrac{ d\left(Dh (x,v),Dh (y,u)\right) }{d\left((x,v), (y,u)\right) ^\beta}\right\}. $
%If $N_1$ and $N_2$ are uniformly $C^{1+\beta}$ embedded manifolds, we say $h$ is $C^{1+\beta}$ if $\|h\|_{C^1}$  and $\hol ( Dh ) $ are finite and 
The    $C^{1+\beta}$-norm of $h$ is   $\max\{ \|h\|_{C^1}, \hol ( Dh ) \}$.

% 
% 
%\begin{enumerate}
%\item   $\displaystyle \sup_{\|v\|= 1, 0< d(x,y) <1} d((D_x f_n)_*v, (D_y f_n)_*v)\le C_1 d(x,y) ^\beta $  \quad{{\red used? yes to cheat but no loss of optimality}}
%
%\item $\lochol ( ( f_n)_*)\le C_1 $\label{ji},  $\lochol  (( f_n\inv)_* )\le C_1 $.
%%\item ${\red \sup_{(x,v)\neq (y,w)\in S\R^k: d((x,v), (y,w))\le 1} \{d( Df (x,v) , Df(y,w))\} \le C_1  d((x,v), (y,w))^\beta}$\note{use this ever??}
%\end{enumerate}

For the remainder, we let $f_n\colon \R^k\to \R^k$ be a sequence of uniformly $C^{1+\beta}$ diffeomorphisms with $f_n(0) = 0$ for each $n$. 
 %In particular $x\mapsto D_x f_n$ is a bounded function.  
 Given $\epsilon'>0$ as above, we assume there is a $C_0>1$ so that for each $n$ 
 \begin{enumerate}
 \item $\|f_n-L_n\|_{{C^1}}\le \epsilon'$, and $\|f_n\inv -L_n\inv \|_{{C^1}}\le \epsilon'$;
%\item  $\sup_{x\neq y
% } \left\{\dfrac{ \| D_xf_n- D_y f_n \|} {d(x,y) ^\beta}\right\} <C_0$, and $\sup_{x\neq y
% } \left\{\dfrac{ \| D_xf_n\inv- D_y f_n\inv \|} {d(x,y) ^\beta}\right\}<C_0.$  
%
%
%$\sup_{%\substack{(x,v)\neq (y,u) \in SN_1  \\ 
%0<d\left((x,v), (y,u)\right) \le 1 %}
% }\left\{ \dfrac{ d\left(Df_n(x,v),Df_n (y,u)\right) }{d\left((x,v), (y,u)\right) ^\beta}\right\} <C_0$ and $\sup_{%\substack{(x,v)\neq (y,u) \in SN_1  \\ 
%0<d\left((x,v), (y,u)\right) \le 1 %}
% }\left\{ \dfrac{ d\left(Df_n\inv(x,v),Df_n\inv (y,u)\right) }{d\left((x,v), (y,u)\right) ^\beta}\right\} <C_0$.  
%\item  $d\left(Df_n(x,v),Df_n (y,u)\right) <C_0d\left((x,v), (y,u)\right) ^\beta$  and $\sup_{%\substack{(x,v)\neq (y,u) \in SN_1  \\ 
%0<d\left((x,v), (y,u)\right) \le 1 %}
% }\left\{ \dfrac{ d\left(Df_n\inv(x,v),Df_n\inv (y,u)\right) }{d\left((x,v), (y,u)\right) ^\beta}\right\} <C_0$.  
% 
\item $\hol (Df)<C_0$, and   $\hol (Df\inv)<C_0$,  
 \end{enumerate}
Note then that for some $C_1\ge C_0>1$ we have 
% Given $\epsilon'>0$ as above, we assume there is a $C_1>1$ so that for each $n$ 
 %\note{maybe be more consistent with definitions here?}
%\note{Here we use that there is a global bound on $\eta_n$}
\begin{enumerate}[resume]
%\item $\|f_n-L_n\|_{{C^1}}\le \epsilon'$, $\|f_n\inv -L_n\inv \|_{{C^1}}\le \epsilon'$;
%\item $\hol (Df)<C_1$, $\ hol (Df\inv)<C_1$;%\|f_n\|_{C^{1+\beta}}\le C_1$ \label{kkli}, $\|f_n\inv \|_{C^{1+\beta}}\le C_1$;
%\item   $\displaystyle \sup_{\|v\|= 1, 0< d(x,y) <1} d((D_x f_n)_*v, (D_y f_n)_*v)\le C_1 d(x,y) ^\beta $  \quad{{\red used? yes to cheat but no loss of optimality}}
\item $\lochol ( ( f_n)_*)\le C_1 $\label{ji}, and   $\lochol  (( f_n\inv)_* )\le C_1 $;
\item $\| (D_x f_n^{\pm 1})_*\|_{C^1}\le C_1$ and $\|D_x f_n ^{\pm1 } \| \le C_1$ for every  $x$. % and $ \|f_n ^{\pm1} \|_{C^1}  \le C_1 $.
%\item ${\red \sup_{(x,v)\neq (y,w)\in S\R^k: d((x,v), (y,w))\le 1} \{d( Df (x,v) , Df(y,w))\} \le C_1  d((x,v), (y,w))^\beta}$\note{use this ever??}
\end{enumerate}
Here, $\lochol ( f_*) $ is the local H\"older variation  of $f_*\colon S\R^k\to S\R^k$ given by $$\lochol ( f_*) := \sup_{%\substack{(x,v)\neq (y,u) \in SN_1  \\ 
0<d\left((x,v), (y,u)\right) \le 1 %}
 }\left\{ \dfrac{ d\left(f_*(x,v),f_* (y,u)\right) }{d\left((x,v), (y,u)\right) ^\beta}\right\} .$$  
  As it follows from all applications, we may moreover assume     that $f_n(y) = L_n(y)$ for all $y$ with $\|y\|\ge 1$.  
%%Here  $\P D f_n\colon S\R^k \to S \R^k$ is the projectivization of the bundle map $Df_n \colon T\R^k \to T\R^k$. 
%\begin{mdframed}
%The local H\"older constant  in (\ref{ji}) means that $$\sup_{(x,v)\neq (y,u) : d\left((x,v), (y,u)\right) \le 1
% }\left\{ \dfrac{ d\left((f_n)_*(x,v),(f_n)_* (y,u)\right) }{d\left((x,v), (y,u)\right) ^\beta}\right\}\le C_1$$
 
%
% (\ref{kkli}) can be taken to be  a (global)  H\"older constant: $$\sup_{x\neq y%: d\left(x,y\right) \le 1
% } \left\{\dfrac{ \| D_xf_n- D_y f_n \|} {d(x,y) ^\beta}\right\}\le C_1.$$
% \note{$f(0)=?$}
%\end{mdframed}
 
%\begin{remark}
%The primary example of the above setup is the globalization of the dynamics $\td f_x$ in Lyapunov charts for a  $C^{1+\hat \beta}$ diffeomorphism $f$ for   $\hat \beta>\beta$ (or more generally a skew-product system of such maps) along a the orbit of a regular point.  In this case, we can build a globalization of $\td f_x$ with the uniform H\"older estimate $\|f_n\|_{C^{1+\beta}}\le C_1$.  This is worked out in \cite[Proposition 7]{MR730270}.
%\end{remark}
%
%

From  the graph transform method, we may construct for the sequence $f_n$ pseudo-stable manifolds through every point of $\R^n$.  
(See \cite{MR0501173} and discussion in \cite[Section 3]{MR2630044} for more details.) In particular, we have the following.  %\note{Here we use the upper bounds $\eta$.}
\begin{proposition}\label{prop:HPS}
There exists a $\beta' >\bar \theta$ and $\beta''>0$ so that for every sufficiently small $\epsilon'>0$ and every $C_1>1$ as above there is a  $\hat C>0$ %\footnote{Note that $\beta'$ depends only on $\gamma', \hat \gamma', \kappa', \hat \kappa', \eta', \hat \eta'$ and satisfies  $\theta<\beta'$.} 
such that for %every sufficiently small $\epsilon'>0$   as above and 
every $n\in \Z$, $\star= \{u,c,s,cu,cs\}$, and $x\in \R^k$ there are manifolds $W^\star_n(x)$  containing $x$ with
\begin{enumerate}
\item $e^{\hat\kappa_n} d(x,y) \le d(f_n(x), f_n(y)) \le e^{\hat\eta_n} d(x,y) $ for $y\in W^u_n(x)$;
\item $e^{ \gamma_n} d(x,y) \le d(f_n(x), f_n(y)) \le e^{\hat\eta_n} d(x,y) $ for $y\in W^{cu}_n(x)$;
\item $e^{ \gamma _n} d(x,y) \le d(f_n(x), f_n(y)) \le e^{\hat\gamma _n} d(x,y)$ for $y\in W^c_n(x)$;
\item $e^{  \eta_n } d(x,y) \le d(f_n(x), f_n(y)) \le e^{\hat\gamma_n } d(x,y)$ for $y\in W^{cs}_n(x)$;
\item $e^{  \eta_n} d(x,y) \le d(f_n(x), f_n(y)) \le e^{  \kappa_n} d(x,y)$ for $y\in W^s_n(x)$;
\item $f_n (W^\star_n(x)) = W^\star_{n+1} (f_n(x))$ for every $x\in \R^k$ and $n\in \Z$.
\item If $y\in W^\star_n(x)$ then $W^\star_n(x) = W^\star_n(y)$.  In particular, the partition into $W^\star_n$-manifolds foliates $\R^k$; moreover, the partition into $W^s_n$-manifolds subfoliates each $W^{cs}_n(x)$.  
\item Each $W^\star_n(x)$ is the graph of a $C^{1+\text{H\"older}}$ function $G^\star _n(x)\colon \R^\star\to(\R^\star)^\perp$ with $\| D_uG^\star _n(x)\|\le \frac 1 3$  for all $u\in \R^\star$ and  % \note{check} 
\begin{enumerate}
	\item $\Hol[\beta]( D  G^\star _n(x)) \le \hat C$ for $\star = s$;

	\item $\Hol[\beta']( D  G^\star _n(x)) \le \hat C$ 	for $\star = cs, c, cu$;
%	$\| D _uG^\star _n(x)- D _vG^\star _n(x)\|\le\hat C\|u-v\|^{\beta}$  for $\star = s$;
%	\item$\| D _uG^\star _n(x)- D _vG^\star _n(x)\|\le\hat C\|u-v\|^{\beta'}$ %$\|  G^\star _n(x)\| _{1+\beta'} \le\hat C$  
%	for $\star = cs, c, cu$;
		\item $\Hol[\beta'']( D  G^\star _n(x)) \le \hat C$ for $\star = u$.
%	\item $\| D _uG^\star _n(x)- D _vG^\star _n(x)\|\le\hat C\|u-v\|^{\beta''}$ % $\|  G^\star _n(x)\| _{1+\beta''}  \le\hat C$   
%	for $\star = u$.
%	\item $\hol [\beta']( D G^\star _n(x)) \le \hat C$. %% where $\hat C$.   depends only on $\epsilon'$ and $C_1$.  %\red{???}$;  \hfill{}{\red do I care? Need uniform control?}
\end{enumerate}
Moreover, the functions $G_n^\star (x)$ depend continuously on $x$.  % in the relevant $1+$H\"older topology.  
%In particular, each $W^\star_n(x)$ is contained in the $\frac 1 3 $-cone around $x+ \R^\star$.  
\end{enumerate}
\end{proposition}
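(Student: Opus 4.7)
The construction is the standard Hadamard--Perron / Hirsch--Pugh--Shub graph transform applied to a non-autonomous sequence; my plan is to carry it out in a form that extracts precisely the Hölder exponents $\beta'>\bar\theta$ and $\beta''$ from the bunching inequalities \eqref{eq:holder}, \eqref{eq:coarsbunch}, and \eqref{eq:bunch}. For $\star\in\{s,c,u,cs,cu\}$ let $E^\star\subset\R^k$ be the corresponding coordinate subspace of the block decomposition and let $(E^\star)^\perp$ be its orthogonal complement. For each sequence $(x_n)_{n\in\Z}$, I work in the space of sequences of maps $G_n\colon E^\star\to (E^\star)^\perp$ with $G_n(0)=0$ and $\|DG_n\|_\infty\le 1/3$, and the graph transform $\Gamma$ defined by requiring that $f_n$ send $\mathrm{graph}(G_n)$ into $\mathrm{graph}(\Gamma G_{n+1})$ (centered at $x_n$ and $f_n(x_n)$). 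Because $\|f_n-L_n\|_{C^1}\le\epsilon'$ and the spectral gaps $|\kappa_n'-\gamma_n'|$, $\hat\kappa_n'-\hat\gamma_n'$, $|\kappa_n'|$ are bounded below, one has genuine cone invariance and $\Gamma$ is a contraction on the space of uniformly Lipschitz sections in the sup metric; the unique fixed section produces the manifolds $W^\star_n(x)$ satisfying (1)--(7). The expansion/contraction rates in items (1)--(5) then come from iterating the $C^1$ estimate on $f_n-L_n$ inside the invariant cones.

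For the $C^1$ statement, I linearize the graph transform to get an induced map on candidate derivatives: a section $P_n(x)\colon E^\star\to (E^\star)^\perp$ is viewed as a linear parametrization of a candidate tangent space at $x\in W^\star_n$, and $\Gamma$ induces $P_n\mapsto Q_{n+1}$ via the classical formula $Q = (C+DP)(A+BP)^{-1}$ where $A,B,C,D$ are the blocks of $D_{x}f_n$ relative to $E^\star\oplus(E^\star)^\perp$. The cone conditions coming from the gaps $\kappa_n<\gamma_n$ (resp.\ $\hat\gamma_n<\hat\kappa_n$) make this induced map a fiberwise contraction, and the resulting invariant section gives $D G^\star_n(x)$ and hence continuity of the tangent bundle.

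For the Hölder regularity of $DG^\star_n(x)$, I run the standard telescoping/iteration argument of Hirsch--Pugh--Shub on the induced fiber map. Two nearby base points $y,y'\in W^\star_n(x)$ satisfy $d(f_n^{-k}y, f_n^{-k}y') \lesssim e^{-k\eta_n} d(y,y')$ for the stable case (and analogously for the others); comparing the invariant sections at these two points with initial error bounded by $C_1$ and writing the derivatives as backward sums of contributions bounded by $\hat C e^{-k\cdot(\text{gap})}\cdot e^{-k\eta\beta}$ gives the Hölder estimate whenever the corresponding ratio in \eqref{eq:holder}, \eqref{eq:coarsbunch} is less than $1$. Concretely, \eqref{eq:coarsbunch} with exponent $\theta\le\bar\theta$ yields a Hölder exponent $\beta''=\beta$ for $\star=s$ (the strongest contraction, closing up at the first Hölder scale), while for $\star=cs, c, cu$ the relevant gap is the weaker $\gamma_n-\kappa_n$ (or symmetric) and \eqref{eq:holder} yields some $\beta'\in(\bar\theta,\beta)$; similarly for $\star=u$ one gets $\beta''$ from the symmetric inequality. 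The foliation and subfoliation properties in (7) then follow from uniqueness of the fixed point of $\Gamma$ and the observation that pushing a graph centered at $y\in W^\star_n(x)$ forward by $f_n^k$ for all $k$ gives the same invariant section as centering at $x$, together with the fact that for $\star=cs$ the $s$-graph transform lives inside the $cs$-graph transform.

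The routine part is existence, Lipschitz bounds, and continuous dependence. The main obstacle is the bookkeeping for the Hölder exponent: showing $\beta'>\bar\theta$ for the center-stable derivative while only assuming $\beta>\bar\theta$ requires that the telescoping argument converges at an exponent strictly larger than $\bar\theta$, which is where the strict inequalities in \eqref{eq:holder} and \eqref{eq:coarsbunch} and the choice $\hat\beta<\beta$ in \eqref{eq:3} enter. One has to verify that the induced action on derivatives contracts in the $\beta'$-Hölder norm for some $\beta'\in(\bar\theta,\hat\beta)$, which amounts to checking that the right-hand side of the geometric series estimate remains summable; this is exactly what the choice of $\theta,\bar\theta,\hat\beta$ in the setup was designed to guarantee, so the argument goes through.
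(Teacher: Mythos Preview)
The paper does not prove this proposition; it simply cites the graph-transform construction of \cite{MR0501173} and \cite[Section~3]{MR2630044}, and refers to \cite[Section~6]{MR983869} for the $C^{1+\text{H\"older}}$ leaf regularity, which is exactly the Hadamard--Perron/HPS machinery you outline. Your sketch therefore matches the paper's (deferred) argument; the only quibble is that the full $\beta$-regularity of stable leaves in item~8(a) needs no bunching---it comes directly from $f_n\in C^{1+\beta}$ and forward contraction, not from \eqref{eq:coarsbunch}---and that \eqref{eq:holder} is what the paper invokes (via the $C^r$-section theorem) for the transverse $\bar\theta$-H\"older continuity of $x\mapsto E^\star_n(x)$ stated just after the proposition, rather than for the leafwise exponent $\beta'$.
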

%(Find a good source on these guys in this setup.  )
For a discussion of $C^{1+\text{H\"older}}$-regularity, see \cite[Section 6]{MR983869}.  Note in particular, that each $W^\star_n(x)$ is uniformly $C^{1+\text{H\"older}}$-embedded. 
We write $$W^\star_n(x,R) := \{ y\in W_n^\star (x): d(x,y)<R\}.$$

Write $E^\star_n(x):= T_xW^\star_n(x)$.  
From our choice of $\bar \theta>0$ satisfying \eqref{eq:holder}, it follows (for example from the $C^r$-section theorem \cite[page 30]{MR0501173}; see also discussion in \cite{MR2630044}) %and the righthand side of \eqref{eq:bunch} 
that the tangent spaces $E^\star_n(x)$ are H\"older continuous with exponent $\bar \theta$ and constant uniform in $x\in \R$ and $n\in \Z$.

%Given $n,j\in \Z$ we write 
%\begin{itemize}
%\item $f^{(j)}_n:= \mathrm{id}$, $j=0$;
%\item $f^{(j)}_n:= f_{n+j-1} \circ \dots \circ f_n$, $j>0$;
%\item $f^{(j)}_n:= f_{n+j}\inv  \circ \dots \circ f_{n-1}\inv$, $j<0$.
%\end{itemize}

%\framebox{define $W^u(x,R)$ {\red... which norm...?}}

%
%\begin{mdframed}
%Actually, there is a serious problem here.  The stable guys you build above don't coincide with the stable guys in the manifold because of the globalization.  If two guys stay in charts though, they should coincide so that should be fine.  
%\end{mdframed}
%\begin{mdframed}
%{ Actually, do we lose some regularity here?.. Might need to reduce $\beta$ here... Yes  in principle see \cite[theorem 6.6]{MR983869} but can choose $\epsilon$ small enough so that $\beta$ above is fine.  If I'm going to do week stables though we are in fact  going to lose some regularity. }
%\end{mdframed}
\subsection{$C^{1+\text{H\"older}}$ holonomies inside center-stable manifolds }
Fix $R>0$.    Let $p\in \R^k$ and given $n\in \Z$ consider $q\in W^s_n(x,R)$.
Let $\hat D_1 $ and $\hat D_2$ be two  uniformly $C^{1+\beta'}$ embedded, $\dim(\R^{cu})$-dimensional manifolds  without boundary and with $p\in \hat D_1$ and $q\in \hat D_2$.  We assume the diameter of each $D_i$ is less than $1$ and that each subspace $T_x\hat D_i$ is sufficiently transverse to $\R^s$: given $v= T_x \hat D_i$ with $v= v^s + v^{cu}$ we have $\|v^{cu}\|\ge 3 \|v^s\|$.  
 %  with angle  bounded below uniformly in $x$ and $i$. 
   Let $D_1= W^{cs}_n(p)\cap \hat D_1$  and $D_2= W^{cs}_n(q)\cap \hat D_2$.    
Given $x\in D_1$ let $h_{D_1, D_2}(x)$ denote the unique point $y$ in $D_2$ with $y\in W^s_n(x)$ if such a point exists.  Note that the domain and codomain of $h_{D_1, D_2}$ are open subsets of $D_1$ and $D_2$.  By  restriction of domain and codomain we may assume $h_{D_1, D_2}\colon D_1 \to D_2$ is a homeomorphism.  

%Our main result is the following.  
%\begin{theorem} %\label{thm:difflholonomies}
%The map $h_{D_1, D_2}$ is Lipschitz.  Moreover the Lipschitz constant depends only on the choice of the constants $\epsilon'>0$ and $C_1$,  the distance $R$, and the angle formed by $D_i$ and $\R^u$.  
%\end{theorem}

Our main result is the following.  
\begin{theorem}\label{thm:difflholonomies}
The map $h_{D_1, D_2}$ is a $C^{1+\hat \alpha}$ diffeomorphism for some $\hat \alpha>0$.  Moreover, the 
 $C^{1+\hat \alpha}$-norm of $  h_{D_1, D_2}$    is uniform in all choices of $D_1$ and $D_2$ as above.  
%\note{do we need local norm?}
%Moreover, the exponent $\hat \alpha$ depends only on $\gamma'_n, \hat \gamma'_n, \kappa'_n, \hat \kappa'_n, \eta'_n, \hat \eta'_n  $ and the local $C^{1+\hat \alpha}$-norm of $  h_{D_1, D_2}$   depends only on the choice of the constants $\epsilon'>0$ and $C_1$ and   the distance $R$.  %, and the angle formed by $D_i$ and $\R^u$.  
\end{theorem}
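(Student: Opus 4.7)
The plan is to construct the derivative of $h := h_{D_1,D_2}$ as a limit of linear maps obtained by iterating forward under the $f_n$, exploiting stable contraction to cancel error terms, and then to use the bunching conditions \eqref{eq:holder}--\eqref{eq:3} to establish Hölder regularity of this limit in the base point. Set $f^{(N)} := f_{n+N-1}\circ\cdots\circ f_n$, and let $D_i^{(N)} := f^{(N)}(D_i)$, which sits inside $W^{cs}_{n+N}$ by invariance of the center-stable foliation. The starting point is the equivariance
$$f^{(N)}\circ h_{D_1,D_2} \;=\; h_{D_1^{(N)},\,D_2^{(N)}} \circ f^{(N)}.$$
First I would check that each $D_i^{(N)}$ remains uniformly transverse to $E^s_{n+N}$ and uniformly $C^{1+\theta}$-embedded inside its center-stable leaf: the standard graph-transform argument works, with \eqref{eq:bunch} ensuring that transversality is preserved under iteration and \eqref{eq:coarsbunch} providing the uniform $C^{1+\theta}$ bound on the embedding.

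Next, for $x\in D_1$ I would define a candidate derivative
$$P_x := \lim_{N\to\infty} \bigl(D_{h(x)} f^{(N)}|_{T_{h(x)}D_2}\bigr)^{-1}\circ \pi^{(N)}_x\circ D_x f^{(N)}|_{T_xD_1},$$
where $\pi^{(N)}_x$ denotes the projection $T_{f^{(N)}(h(x))}\R^k = E^s_{n+N}\oplus TD_2^{(N)}$ onto the second summand, well-defined for large $N$ by the transversality just established. Convergence reduces to estimating the difference of consecutive terms, which splits as a vector in the stable direction (killed by the contraction $e^{N\kappa_n}$) plus a wobble in the projection (controlled by the $\bar\theta$-Hölder regularity of $E^s$ from \eqref{eq:holder}); the net exponent is $\omega < 0$ from \eqref{eq:3}, so the tail is geometric. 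To verify that $P_x$ is the actual derivative of $h$ at $x$, I would estimate $\|h(y) - h(x) - P_x(y-x)\|$ for $y\in D_1$ near $x$ by iterating forward $N = N(y)$ steps until $d(f^{(N)}x, f^{(N)}y)\asymp 1$, bounding the error at unit scale using the $C^{1+\theta}$ geometry of $D_2^{(N)}$, and pulling back by $(f^{(N)})^{-1}$; the net power of $d(x,y)$ is $1+\hat\alpha$ for some $\hat\alpha > 0$ coming from \eqref{eq:3}.

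The final and most delicate step is to show that $x\mapsto P_x$ is itself $\hat\alpha$-Hölder, with constants uniform in $D_1, D_2, n$. Comparing $P_x$ with $P_{x'}$ via their $N$-th approximants produces two error sources that must be balanced against the iteration: the $\beta$-Hölder variation of $Df$ along orbit segments of length $N$, which is controlled against the transverse expansion by the coarse bunching \eqref{eq:coarsbunch}, and the $\bar\theta$-Hölder variation of the stable distribution $E^s_n$, which is controlled by \eqref{eq:holder} together with \eqref{eq:bunch}. Optimising $N$ as a function of $d(x,x')$ then yields the desired Hölder bound. The principal obstacle, and the reason the proof is subtle, is keeping both error sources tamed simultaneously and uniformly in all parameters: the circulated argument of \cite{Burns_anote} uses only \eqref{eq:bunch}, but the $\beta$-Hölder contribution from $Df$ forces the extra strength of \eqref{eq:coarsbunch}, which is exactly why the present theorem requires slightly stronger bunching than \cite[Theorem 0.3]{MR2630044}.
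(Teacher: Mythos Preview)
Your outline is correct and uses the same core mechanism as the paper---iterate forward so that stable contraction kills the distance between $x$ and $h(x)$, build the derivative as a limit of conjugated linear maps, then balance tail against growth for H\"older continuity---but it diverges from the paper in one structural choice. The paper does \emph{not} work with general transversals directly: it first proves the special case where $D_1,D_2$ are the canonical center leaves $W^c_n$ (Theorem~\ref{thm:difflholonomiesspecial}), which are $f$-invariant and so require no graph-transform maintenance of $D_i^{(N)}$, and then invokes Journ\'e's theorem \cite{MR1028737} to upgrade this to a $C^{1+\hat\alpha}$ regularity statement for the stable foliation of each $W^{cs}$-leaf, whence the result for arbitrary transversals. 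Your direct route trades Journ\'e for the graph-transform bookkeeping on $D_i^{(N)}$; both are viable, and yours is arguably more self-contained.

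On the convergence step your sentence ``the net exponent is $\omega<0$'' is compressing the real work. In the paper this is Lemma~\ref{lem:1,1}, a backward-induction estimate on the sphere bundle $SW^c$ that tracks how an error of size $r^{\bar\theta}e^{\kappa_0^{(n)}\theta}$ at time $n$ propagates back to time $0$: the base-point drift and the directional drift interact through the $\beta$-H\"older term $C_1 d(x_k,y_k)^\beta$, and it is precisely here that \eqref{eq:coarsbunch} and \eqref{eq:3} enter to force the combined exponent below zero. The paper also separates the argument into convergence of the projectivized derivative $(h_n)_*$ (Lemma~\ref{lem:5}) and convergence of the norms $\|Dh_n\|$ (Lemmas~\ref{lem:bounded}--\ref{lem:deltaconverge}); your formula for $P_x$ bundles these together, which is fine but means the two distortion estimates (the paper's Lemmas~\ref{lem:bounded} and~\ref{lem:thisone}) will be hidden inside a single computation. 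Your identification of the H\"older step as the delicate one, and of the two competing error sources (H\"older variation of $Df$ versus H\"older variation of $E^s$), matches the paper's Step~4 exactly.
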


In particular, we have 
\begin{corollary}\label{thm:lipholonomies}
The map $h_{D_1, D_2}$ is bi-Lipschitz with  Lipschitz constants uniform in all choices of  $D_1$ and $D_2$ as above.  
%depending  only on the choice of  $\epsilon'>0$,  $C_1$,  and  $R$.   
\end{corollary}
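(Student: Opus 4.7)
The corollary will follow quickly from Theorem \ref{thm:difflholonomies} together with the symmetry of the stable-holonomy construction, so the plan is short and mostly mechanical.

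First, I would extract the upper Lipschitz constant directly from the theorem. Theorem \ref{thm:difflholonomies} supplies a constant $C$, independent of the choices of $\hat D_1$, $\hat D_2$, $p$, $q$, and $n$, such that $\|h_{D_1, D_2}\|_{C^{1 + \hat \alpha}} \le C$; in particular the intrinsic derivative $Dh_{D_1,D_2}$ is bounded in operator norm by $C$ on (the possibly restricted) $D_1$. Because the ambient manifolds $\hat D_i$ are uniformly $C^{1 + \beta'}$-embedded with diameter at most $1$, and because of the uniform transversality hypothesis $\|v^{cu}\| \ge 3 \|v^s\|$ for $v \in T_x \hat D_i$, the intrinsic Riemannian distance on each $D_i$ is comparable to the ambient Euclidean distance on $\R^k$ up to a constant $C_0'$ depending only on these embedding bounds. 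The fundamental theorem of calculus along a minimizing path in $D_1$ then yields $d(h_{D_1, D_2}(x), h_{D_1, D_2}(y)) \le C \cdot C_0' \cdot d(x, y)$, with a constant depending only on the global data of the sequence $\{f_n\}$.

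Second, I would obtain the reverse Lipschitz bound by applying the theorem to the swapped data. By Proposition \ref{prop:HPS}(7) the $W^s_n$-partition is an equivalence relation foliating $\R^k$, so $h_{D_1, D_2}^{-1}$ coincides (where defined) with the analogously constructed holonomy $h_{D_2, D_1}$. The swapped pair $(\hat D_2, \hat D_1)$ satisfies all the hypotheses preceding Theorem \ref{thm:difflholonomies}: the transversality, diameter, and $C^{1 + \beta'}$-embedding conditions are symmetric in the two indices, and $p \in W^s_n(q, R)$ because $W^s_n(p) = W^s_n(q)$ and $d(p, q) < R$. Applying Theorem \ref{thm:difflholonomies} to $(\hat D_2, \hat D_1)$ and rerunning the derivative-to-Lipschitz argument of the previous paragraph gives the same uniform Lipschitz bound on $h_{D_1, D_2}^{-1} = h_{D_2, D_1}$, and the two bounds together give the claimed uniform bi-Lipschitz constant.

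I do not anticipate any real obstacle, since the theorem does all of the work. The one place where some care is needed is the comparison between the intrinsic distance on $D_1$ (with respect to which $C^1$-norms control Lipschitz constants) and the ambient Euclidean distance on $\R^k$ (with respect to which the statement is phrased); this is a routine consequence of the uniform transversality and regularity assumptions on the $\hat D_i$.
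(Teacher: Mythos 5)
Your argument is correct and is essentially the route the paper takes: the corollary is stated as an immediate consequence of Theorem \ref{thm:difflholonomies}, with the uniform $C^{1+\hat\alpha}$ bound giving the Lipschitz estimate and the symmetry of the holonomy construction (applying the theorem to the swapped pair $(D_2,D_1)$) giving the bound on the inverse. Your explicit handling of the intrinsic-versus-ambient distance comparison just fills in details the paper leaves implicit.
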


Recall that each $W^c_n(x)$ is a uniformly $C^{1+\beta'}$-embedded manifold and intersects $W^s_n(y)$ for every $y\in W^{cs}_n(x)$.  Moreover each $E^{cu}_n(y)$ is uniformly-transverse to both $\R^s$ and $E^s_n(y)$.  
As explained below, it suffices to prove Theorem \ref{thm:difflholonomies} for the distinguished family of transversals to $W^s_n$ given by  the family center manifolds.  Given $n\in \Z$, $p\in \R^k$, and $q\in W^s_n(p)$ we write  $h^s_{p,q,n}\colon W^c_n(p)\to W^c_n(q)$ for the stable holonomy map between center manifolds.   More precisely, 
$$h^s_{p,q,n}(z) = W^{cu}_n(q) \cap W^{s}_n(z).$$ As both $\{W _n^{s}(x): x\in W^{cs}_n(p)\}$ and $\{W _n^{c}(x): x\in W^{cs}_n(p)\}$ subfoliate  $ W^{cs}_n(p)$, it follows that $h^s_{p,q,n}(z) \in W^{c}_n(q).$
Moreover, by the global transverseness of the manifolds the maps $h^s_{p,q,n}$ have domain    all of $W^c_n(p)$. 

Note that $1<e^{\alpha \epsilon}<e^{\alpha(\hat \gamma_n - \gamma_n)}$. For remainder,  fix $0<\delta<1 $ so that for all $n\in \Z$  we have%$$\delta^{1-\theta} + (C_1)^2 \delta ^{\beta-\theta<1.$$
 \begin{equation}\label{eq:delta} 1+ e^{-(\hat \gamma_n-  \gamma_n)} C_1 \delta^{\beta-\hat \beta} \le e^{\alpha (\hat \gamma_n-  \gamma_n)}.\end{equation}

%Let $r_1 = (\delta/C_2)^{1/\theta}$ and 
Given $p,q,n$ with $q\in W^s_n(p)$ define $$\rho(p,q,n):= \sup \{d(x, h^s_{p,q,n}(x))\mid x\in 
W^c_n(p,1)\}.$$
Take $0<\rho_0<1$ so that  %\note{check conditions on $\rho_0$} 
\begin{equation} \label{llklklklkl}(3 C_2 C_1 +1)^{\bar \theta \inv} \rho_0\le \delta \end{equation} where $C_2\ge1$ is a constant to be defined in Section \ref{sec:aoorix} below. % \ref{claim:piprop} below.  
Fix $0<R_0<1$ for the remainder so that for all $n\in \Z$ ,  $p\in \R^k$ and $q\in W^s_n(p, R_0)$ we have $$\rho(p,q,n) \le \rho_0.$$
With the above choices, %we will show the stable holonomies between $W^c_n(p,1)$ and $W^c_n(q)$ are uniformly $C^{1+\text{H\"older}}$ for all $q\in W^s_n(p, R_0)$.  
we prove    a special case of Theorem \ref{thm:difflholonomies}.
\begin{theorem} \label{thm:difflholonomiesspecial}\label{thm:difflholonomiesc}
Let $p\in \R^k$, $q\in W^s_n(p, R_0)$.  Then the  holonomy map $h^s_{p,q,n}\colon W^c_n(p,1) \to W^c_n(q)$ is  a  $C^{1+ \hat \alpha }$-diffeomorphism.  Moreover the   $C^{1+\hat \alpha }$-norm of $h^u_{p,q,n}$ is uniform across the   choice of $p$, $q$ and $n$.    
\end{theorem}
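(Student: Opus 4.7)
The plan is to define a candidate derivative of $h^s_{p,q,n}$ by telescoping along the forward orbit, verify it is the genuine Fr\'echet derivative, and upgrade continuity of this derivative to H\"older continuity via the strengthened bunching \eqref{eq:coarsbunch}--\eqref{eq:3}.  Set $F^k_n := f_{n+k-1}\circ\cdots\circ f_n$ for $k\ge 0$, and use the equivariance
\[
h^s_{F^k_n(p),\,F^k_n(q),\,n+k}\circ F^k_n \;=\; F^k_n \circ h^s_{p,q,n},
\]
which reduces the problem at $z\in W^c_n(p,1)$ to the analogous problem at $z_k:=F^k_n(z)$, where the relevant center manifolds have separation of order $\rho_0\,e^{\bar\kappa k}$ inside their common center-stable leaf.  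For each $k$ I would choose a natural linear identification $\pi_k\colon E^c_{n+k}(z_k)\to E^c_{n+k}(z_k')$---for instance, the $E^s_{n+k}$-parallel projection inside $TW^{cs}_{n+k}$---and set
\[
H_k(z) \;=\; \bigl(D_{z_k'}F^{-k}_n\bigr)\big|_{E^c_{n+k}(z_k')}\,\circ\,\pi_k\,\circ\,\bigl(D_zF^k_n\bigr)\big|_{E^c_n(z)}.
\]
The successive differences $H_{k+1}(z)-H_k(z)$ combine two competing effects: $d(z_k,z_k')$ decays like $e^{\bar\kappa k}$, so the $\theta$-H\"older regularity of $E^s$ inside $W^{cs}$ makes the change in projection of size $e^{\bar\kappa k\,\theta}$, while the $E^c$-sandwich $(DF^{-k}_n)_{z_k'}\circ(DF^k_n)_z$ has operator norm at most $e^{(\hat\gamma_n-\gamma_n)k}$.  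Condition \eqref{eq:bunch} makes the product summable at a rate bounded by $e^{\omega k}$, producing a uniform limit $H(z):=\lim_k H_k(z)$.

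To see that $H(z)=D_z h^s_{p,q,n}$, I would again use equivariance to reduce to iterate $k$, where $h^s$ acts between two $C^{1+\beta'}$ center manifolds separated by at most $\rho_0\,e^{\bar\kappa k}$.  By Proposition~\ref{prop:HPS} each such manifold is a uniformly $C^{1+\beta'}$-embedded graph, and together with the uniform transversality of $E^{cu}$ and $E^s$ the holonomy between two such close center leaves is $C^0$-close to $\pi_k$ with remainder controlled by the same geometric series.  Passing $k\to\infty$ identifies $D_z h^s_{p,q,n}$ with $H(z)$ and yields uniform $C^1$-bounds.

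The central step is H\"older continuity of $z\mapsto H(z)$ along $W^c_n(p,1)$.  For $z_1,z_2\in W^c_n(p,1)$ with $d(z_1,z_2)=r$, I would split
\[
H(z_1)-H(z_2) \;=\; \bigl(H(z_1)-H_k(z_1)\bigr) + \bigl(H_k(z_1)-H_k(z_2)\bigr) + \bigl(H_k(z_2)-H(z_2)\bigr)
\]
and optimize $k=k(r)$.  The tails decay at rate $e^{\omega k}$ from the first step.  The head term requires H\"older comparisons of $(DF^k_n)_{z_1}$ with $(DF^k_n)_{z_2}$, and of $\pi_k$ evaluated at $(z_k,z_k')$ versus $(F^k_n(z_2),F^k_n(z_2'))$; these orbits separate at rate at most $e^{\hat\gamma_n k}$, so combining the $\beta$-H\"older bound on $Df$, the $\bar\theta$-H\"older regularity of $E^s$ and $E^c$, and the $\beta'$-H\"older control on $DG^{cs}_n$ from Proposition~\ref{prop:HPS} yields a head bound of order $e^{(\hat\gamma_n-\gamma_n)k}\bigl(e^{\hat\gamma_n k}r\bigr)^{\beta}$.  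Equating tail and head as a power of $r$ and invoking \eqref{eq:coarsbunch} and \eqref{eq:3} extracts a uniform positive H\"older exponent $\hat\alpha$.  I expect this final balancing to be the main obstacle: \eqref{eq:bunch} alone suffices for differentiability, but H\"older control on $DH$ requires the buffer $\hat\theta<\theta<\bar\theta<\hat\beta<\beta$ supplied by \eqref{eq:coarsbunch} and \eqref{eq:3} to absorb the $\beta$-H\"older growth of $Df$ along expanding center orbits and still produce a positive exponent uniform in $(p,q,n)$.
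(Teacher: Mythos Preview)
Your strategy---construct a candidate derivative by telescoping along the forward orbit, prove convergence via the bunching \eqref{eq:bunch}, and extract H\"older continuity by a tail/head split with an optimized cutoff $k=k(r)$---is exactly the skeleton of the paper's proof.  The paper also builds approximations $h_n=f^{(-n)}_n\circ\pi_n\circ f^{(n)}_0$ from an initial transversal projection $\pi_n$ and shows $h_n\to h$ in $C^1$; Step~4 then obtains the H\"older exponent by balancing a geometric tail against the growth of a head term, precisely as you describe.

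Two technical choices the paper makes differ from your sketch and are worth flagging.  First, the paper works on the sphere bundle $S W^c$: it proves convergence of the projectivized maps $(h_n)_*$ separately (Lemma~\ref{lem:5}) and then recovers the norms by a logarithmic distortion estimate (Lemmas~\ref{lem:bounded} and \ref{lem:thisone}).  This decoupling replaces your direct operator-norm bound on the ``sandwich'' $(DF^{-k})_{z_k'}\circ(\,\cdot\,)\circ(DF^k)_z$ and makes the backward propagation clean; the workhorse is Lemma~\ref{lem:1,1}, a backward-in-time induction that turns bounds at iterate $n$ on both position and unit direction into bounds at every intermediate iterate.  Your outline does not isolate an analogue of this lemma, but it is what actually supplies the rate $e^{\kappa_0^{(n)}\theta+(1+\alpha)(\hat\gamma-\gamma)}$ used in both Step~2 and Step~4.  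Second, your head estimate $e^{(\hat\gamma-\gamma)k}\bigl(e^{\hat\gamma k}r\bigr)^{\beta}$ is heuristic: comparing $H_k(z_1)$ with $H_k(z_2)$ requires controlling the accumulated H\"older variation of the $k$-fold composition $(DF^k)_*$, not just a single step of $Df$.  The paper handles this with Claim~\ref{claim:kkklk} (the crude bound $d((f_0^{(n)})_*\xi,(f_0^{(n)})_*\zeta)\le e^{c_0 n}d(\xi,\zeta)^\beta$) and a carefully chosen scale $r_n=\rho\,e^{\kappa_0^{(n)}a_0}$, with $a_0$ tuned so that the growth $e^{c_0 k}$ is absorbed.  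With these two devices in place your balancing argument goes through and yields $\hat\alpha>0$; without them the head term as written does not close.
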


Taking $\hat \alpha <\beta'$, appealing to a theorem of Journ\'e \cite{MR1028737} (see also related discussions in \cite[Section 6]{MR1432307})
it follows that the leaves of the partition $\{W^s_n(x), x\in \R^k\}$ restrict to a $C^{1+\hat \alpha }$-foliation inside each $W^{cs}_n(p)$.  Theorem \ref{thm:difflholonomies} then follows.  
In particular, given arbitrary transversals $D_1  $ and $D_2$ to  $\{W^s_n(x), x\in W^{cs}_n(p) \}$ inside $W^{cs}_n(p)$ as above, it follows that the holonomy map  $h^s_{D_1, D_2}$ is uniformly $C^{1+\hat \alpha }$ on its domain.

% with angle bounded  uniformly from below.  
% Moreover, the tangent spaces $T_x W^c_n(x)$ form a uniformly $\theta$-H\"older distribution.  

\section{Proof of Theorem \ref{thm:difflholonomiesspecial}}\label{Sec:2}
\subsection{Approximate holonomies and related notation}\label{sec:aoorix}

Given $n\in \Z$ and arbitrary $p,q\in \R^k$ with $q\in W^s_n(p, 1)$ we assume there exists a uniformly $C^{1+\beta'} $ initial approximation $$\pi_{p,q,n} \colon W^c_n(p, 1)\to W^c_n(q)$$ to  the stable holonomy map $h_{p,q,n}^s$ 
%That is, we assume 
%\R^u\oplus \R^s$.  We enumerate some properties of the maps $\pi_{p,q,n}$.
%\begin{claim}\label{claim:piprop}
with the following properties:
there is a     constant $C_2>0$ % (uniform in all choices above) %depending only on $\epsilon'>0$ and  $C_1$) % and $C_4>0$ (depending only on $\epsilon'>0$,  $C_1$, and $\hat C$ above)
 so that for every $n\in \Z$,  $ p\in \R^k$, and $q\in W^s(p,1)$ we have 
\begin{enumerate}
	\item \label{item1}$d(\pi_{p,q,n} (p), q)\le C_2d(p,q)$; %\note{first time $\theta$ shows up}
	\item \label{item2}$d \left((\pi_{p,q,n} )_*(v), v\right)\le C_2d(p,q)^{\bar\theta}$ for all $v\in S_pW^c_n(p)$;
	\item \label{item2'}$\left|\| D\pi_{p,q,n} \|  - 1\right|\le C_2d(p,q)^{\bar \theta}$;  	
	\item \label{item3}  if $p'\in W^c_n(p)$ and $q'\in W^s_n(p', 1)\cap W^c_n(q)$ then 
	$\pi_{p,q,n} $ and $\pi_{p',q',n} $ coincide on $W^c_n(p,1)\cap W^c_n(p',1)$. % {\red Actually really do need this}
%	\item \label{pooo}{\red $(\pi_{p,q,n})_*\colon SW^c_n(p,1)\to SW^c_n(q)$ is $\theta$-H\"older with $\hol[\theta]((\pi_{p,q,n})_*)<C_4.$}
\end{enumerate}

For instance, we may define such a system of approximating maps $\{\pi_{p,q,n}\}$   by linear projection: for $z\in W^c_n(p, 1)$ define $\pi_{p,q,n} ( z)$ to be  the unique point of intersection of $W^c_n(q)$ and $ z + \R^u \oplus \R^s$.  One may verify the above properties hold for this choice of $\pi_{p,q,n}$.
\subsubsection{Additional notation}It is enough to prove Theorem \ref{thm:difflholonomiesc} in the case that $n=0$.
For the remainder, we  fix $p$ and $q$ in $ \R^k$ with  $q\in W^s_0(p, R_0)$ as in Theorem \ref{thm:difflholonomiesc}.  Write  $h:=h_{p,q,0}^s$.

%Given $n,j\in \Z$ we write 
%\begin{itemize}
%
%\end{itemize}

Given $n, j\in \Z$ 
 \begin{itemize} 
%\item $f^n= f_{n-1}\circ \dots \circ f_0$
%\item $f^{-n}= (f^n)\inv$
\item $f^{(j)}_n:= \mathrm{id}$, $j=0$;
\item $f^{(j)}_n:= f_{n+j-1} \circ \dots \circ f_n$, $j>0$;
\item $f^{(j)}_n:= f_{n+j}\inv  \circ \dots \circ f_{n-1}\inv$, $j<0$;
\item for $z\in \R^k$, write  $z_n = f^{(n)}_0(z)$;
\item write  $D_n\subset W^c_n(p_n):= f^{(n)}_0(W^c_0(p,1))$; 
\item let $\displaystyle \kappa^{(n)}_\ell =\begin{cases}\kappa_{\ell+n-1}+  \dots + \kappa_\ell & n>0\\
0 &n= 0\\
-\kappa_{\ell+n}-  \dots  - \kappa_{\ell-1} & n<0;
\end{cases}$
\item similarly define $\hat \kappa ^{(n)}_\ell, %\eta ^{(n)}_\ell , \hat \eta ^{(n)}_\ell , 
\gamma ^{(n)}_\ell,$ and $ \hat \gamma ^{(n)}_\ell $.
\end{itemize}

Note that if $x\in W^c_0(p,1) = D_0$ % W^c_0(p,1)$
 and $y= h(x)\in W^c_0(q)$ then for all $n\ge 0$ we have 
$$d(x_{n},y_{n})\le e^{\kappa _0^{(n)}} d(x,y)\le e^{\kappa _0^{(n)}} { \rho(p,q,0)<\rho_0.}$$ % {\red \rho(p,q,-n)<1.}$$
It follows that $\pi_{x_{n},y_{n},n}$ is defined.  % the conclusions of Claim \ref{claim:piprop}.  
By property (\ref{item3}) of the approximate holonomy maps $\pi_{p_{n}, q_{n}, n}$ it follows that the collection of maps $\{\pi_{x_{n},y_{n},n}: x_n\in D_{n}\}$ coincide with the restriction of a   single approximation which we denote by $\pi_{n} \colon D_{n}\to W^c_{n}(q_{n})$ for the remainder.  Note that $\pi_{n} \colon D_{n}\to W^c_{n}(q_{n})$  has all the properties enumerated above.  % in Section \ref{sec:aoorix}.

\subsubsection{Approximate holonomies}
For $n\ge 0$ we define   $h_{n}\colon W^c_0(p,1) \to W^c_0(q)$ to be successive approximations to $h$ given by 
	$$h_{n} \colon x\mapsto f^{(-n)}_{n} \left( \pi_{n} (x_{n}))\right)=f^{(-n)}_{n} \left( \pi_{n} (f^{(n)}_0(x))\right).$$
Note that each $h_{n} $ is a $C^{1+\beta'}$ diffeomorphisms onto its image.  Although the  $(1+\beta')$-norms of the $h_{n}$ may not be controlled, 
Theorem \ref{thm:difflholonomies} follows by showing that $h_{n} $ converge to $ h \colon W^c_0(p,1) \to W^c_0(q)$  in the $C^1$ topology.  We then show $Dh\colon SW^c_0(p,1) \to TW^c_0(q)$ is H\"older continuous with uniform estimates.  %control on all norms.  
%Moreover, the $C^1$ norm of $h$ and H\"older size of $Dh$ will be controlled uniformly in all parameters.

\subsection{An auxiliary lemma.}

Given  $\xi= (x, v)$ and $\zeta= (y, u)$ in $S \R^k$ recall we write $d(\xi,\zeta) = \sup \{d(x,y), d(v,u)\}$.  %Write $\zeta_n=  (f^n_\ell)_*(\zeta)= (x_n, v_n)$.  
Given $\xi =(x,v)\in S\R^k $ % W^c_0(p,1)$ 
we write  $$\xi_{n}=(x_{n}, v_{n}):= (f^{(n)}_0)_*(\xi) \in S\R^k.$$
%and for $0\le i\le n$ 
%$$\zeta_{-n, i}:= (f^{n-i})_*(\zeta_{-n})   =   (f^{-i}_\ell)_*(\zeta).$$ 
%Then $\zeta{-n, n}= \zeta_{-n}$ and $\zeta_{-n, 0}= \zeta$.  
If $\zeta= (y, w)$ similarly write $\zeta_{n}:= (y_{n}, w_{n})$.

Recall the $\delta>0$,  $\alpha $, and $\hat \beta$  fixed above.

\begin{lemma}\label{lem:1,1}
%There is a $\delta>0$ so that f
%For  $x\in \R^k$ and $y\in W^{c}_\ell(x)$.  
%For $\zeta \in \P E^c_\ell(x) $ and $\xi \in \P E^c_\ell(y) $
Given  $x\in \R^k$, $\xi=(x,v)$, $\zeta=(y,w) \in S W^c_0(x) $, $0\le r\le \delta$, and $n\ge 0$, suppose
%\begin{enumlemma}
%\item \label{lemmapartaaa} 
		that  $d(x_{n}, y_{n})\le  r e^{\kappa_0^{(n)}}$, $\DPP(\xi_{n}, \zeta_{n})\le  r^{\bar \theta }e^{\kappa_0^{(n)} \theta} $, 
	and  for all $0\le k\le n$ that $$ d(x_{k}, y_{k})\le \delta.$$ Then,  for all $0\le k\le n $,
	$$d(x_{k}, y_{k})\le r  e^{  \kappa_0^{(n)} -    \gamma_{k}^{(n-k)} }  \text{ and } \DPP(\xi_{k}, \zeta_{k})\le  r^{\bar \theta } e^{  \kappa_0^{(n)}  \theta   + (1+\alpha)(\hat\gamma_{k}^{(n-k)}- \gamma_{k}^{(n-k)}) } .$$
	In particular, 
	$$d(\xi, \zeta) \le   r^{\bar \theta } e^{ \kappa_0^{(n)}  \theta    + (1+\alpha) \left(\hat\gamma_0^{(n)}- \gamma_0^{(n)}\right)} .$$

%	\item \label{lemmapartbbb} 	
%	If $d(x_{-n}, y_{-n})\le  r e^{\hat \kappa_0^{(-n)}} \text{ and }\DPP(\xi_{-n}, \zeta_{-n})\le  r^\theta e^{\hat \kappa_0^{(-n)} \theta \hat \beta } $ 
%	then for all $0\le k\le n $
%	$$d(x_{-k}, y_{-k})\le r  e^{\hat \kappa_0^{(-n)} +\hat  \gamma_{-n}^{(n-k)} }  \text{ and } \DPP(\xi_{-k}, \zeta_{-k})\le  r^\theta e^{\hat  \kappa_0^{(-n)}  \theta \hat \beta n + (1+\alpha)\left(\hat  \gamma_{-n}^{(n-k)}-  \gamma_{-n}^{(n-k)}\right) }.$$
%	In particular, 
%	$$d(\xi, \zeta) \le   r^\theta e^{\hat \kappa_0^{(-n)}  \theta \hat \beta  - (1+\alpha) (\hat  \gamma_0^{(-n)}-  \gamma_0^{(-n)})} $$
		
	 % \max\{ r e^{(-\kappa + \gamma) n  },   r^\theta e^{[-\kappa  \theta \hat \beta  + 2(\gamma-\hat \gamma)]n}\} .$$

	%{\red this only follows if $x$ and $y$ are centrally related}
%\end{enumlemma}
%%Moreover, if $d(\zeta, \xi) \le e^{-3(\gamma-\hat\gamma) \theta\inv  m} \delta ^{1/\theta}$ then the conclusion of \ref{a} and \ref{b} hold for all $0\le n \le m$.  
\end{lemma}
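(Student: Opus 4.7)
The plan is to prove both inequalities simultaneously by backward induction on $k$, from $k=n$ down to $k=0$. At $k=n$ the claimed bounds reduce directly to the two hypotheses, since $\gamma_n^{(0)}=\hat\gamma_n^{(0)}=0$, and the ``in particular'' statement is just the $k=0$ case of the tangent inequality. So the entire content lies in the inductive step $k+1\Rightarrow k$.

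For the base-point inequality this step is immediate. By invariance of the $W^c$ foliation (Proposition \ref{prop:HPS}(vi)--(vii)), $y_j\in W^c_j(x_j)$ for every $j$, so Proposition \ref{prop:HPS}(iii) gives $d(x_k,y_k)\le e^{-\gamma_k}\,d(x_{k+1},y_{k+1})$, which combined with the inductive hypothesis at $k+1$ yields the desired bound at $k$. For the tangent inequality, write $\xi_k=(x_k,v_k)$ and $\zeta_k=(y_k,w_k)$ and introduce an auxiliary unit vector $\tilde w_k$ obtained by applying the \emph{same} linear map $D_{x_{k+1}}f_k^{-1}$ to $w_{k+1}$ and renormalizing. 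Then split
\[\|v_k-w_k\|\le \|v_k-\tilde w_k\|+\|\tilde w_k-w_k\|.\]
The first term is controlled by the pointwise bi-Lipschitz constant $e^{\hat\gamma_k-\gamma_k}$ of the induced sphere map of $Df_k^{-1}$ restricted to $E^c_{k+1}(x_{k+1})$, afforded by Proposition \ref{prop:HPS}(iii). However, $w_{k+1}$ lies in $E^c_{k+1}(y_{k+1})$ rather than in $E^c_{k+1}(x_{k+1})$, so one first projects $w_{k+1}$ onto $E^c_{k+1}(x_{k+1})$ and pays a correction of size $O\bigl(d(x_{k+1},y_{k+1})^{\bar\theta}\bigr)$ coming from the uniform $\bar\theta$-Hölder regularity of the distribution $E^c$. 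The second term is controlled by the $\beta$-Hölder variation of $Df_k^{-1}$ and contributes an error of size $O\bigl(d(x_{k+1},y_{k+1})^\beta\bigr)$.

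Combining these produces an inequality of the shape
\[d(\xi_k,\zeta_k)\le e^{\hat\gamma_k-\gamma_k}\,d(\xi_{k+1},\zeta_{k+1})\,(1+E_k),\]
where, after substituting the inductive bounds on $d(\xi_{k+1},\zeta_{k+1})$ and $d(x_{k+1},y_{k+1})$ and using $d(x_{k+1},y_{k+1})\le \delta$, the error $E_k$ is bounded by a constant multiple of $C_1e^{-(\hat\gamma_k-\gamma_k)}\delta^{\beta-\hat\beta}$. The choice of $\delta$ in \eqref{eq:delta} is precisely tuned so that $1+E_k\le e^{\alpha(\hat\gamma_k-\gamma_k)}$, giving
\[d(\xi_k,\zeta_k)\le e^{(1+\alpha)(\hat\gamma_k-\gamma_k)}\,d(\xi_{k+1},\zeta_{k+1}),\]
which closes the induction. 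The main obstacle will be the bookkeeping at this last step: one has to identify which Hölder exponent among $\{\beta,\bar\theta,\hat\beta,\hat\theta\}$ controls each error term and verify that the bunching conditions \eqref{eq:holder}--\eqref{eq:bunch} and \eqref{eq:3}, together with the hypothesis that $d(x_j,y_j)\le\delta$ for \emph{every} $0\le j\le n$ (not merely asymptotically), are exactly what is needed for the aggregate error to be absorbed by the $e^{\alpha(\hat\gamma_k-\gamma_k)}$ margin provided by \eqref{eq:delta}.
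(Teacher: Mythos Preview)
Your scheme is correct and is essentially the paper's: backward induction from $k=n$, the base-point estimate from Proposition~\ref{prop:HPS}(iii), and the tangent estimate split into a bi-Lipschitz piece plus a $\beta$-H\"older error absorbed via \eqref{eq:delta}.

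Two points of execution differ from the paper and are worth flagging. First, the paper does \emph{not} project $w_{k+1}$ onto $E^c_{k+1}(x_{k+1})$: it applies the bi-Lipschitz bound directly,
\[
d\bigl((f_k^{(-1)})_*(x_k,v_k),\,(f_k^{(-1)})_*(x_k,w_k)\bigr)\le e^{\hat\gamma_{k-1}-\gamma_{k-1}}\,d(v_k,w_k),
\]
so no $O(d(x_{k+1},y_{k+1})^{\bar\theta})$ correction appears. Your own summary bound $E_k\lesssim C_1 e^{-(\hat\gamma_k-\gamma_k)}\delta^{\beta-\hat\beta}$ also does not account for such a term, so as written the projection step is an inconsistency; since $\bar\theta<\hat\beta$, a $\bar\theta$-H\"older error is \emph{not} controlled by $\delta^{\beta-\hat\beta}$ and would require separate work. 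You should simply drop the projection. Second, rather than the recursive form $(1+E_k)\,d(\xi_{k+1},\zeta_{k+1})$, the paper factors out
\[
\max\bigl\{d(x_k,y_k)^{\hat\beta},\,d(v_k,w_k)\bigr\}
\]
and then uses \eqref{eq:coarsbunch} together with the choice of $\hat\beta$ in \eqref{eq:3} to verify that $r^{\hat\beta}e^{\hat\beta\kappa_0^{(n)}-\hat\beta\gamma_k^{(n-k)}}$ is already dominated by the inductive target $r^{\bar\theta}e^{\theta\kappa_0^{(n)}+(1+\alpha)(\hat\gamma_k^{(n-k)}-\gamma_k^{(n-k)})}$. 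This is exactly the ``bookkeeping'' your closing paragraph anticipates, and it is the only place $\hat\beta$ enters the argument.
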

%The estimate in  \ref{lemmapartbbb} is used only to establish the H\"older continuity the maps $Dh$.   

\begin{proof}			%We only prove \ref{lemmapartaaa}.
 For the final assertion, note that % $d(\xi, \zeta) \le  \max\{ r e^{\kappa_{0}^{(n)} - \gamma_{0}^{(n)}  },   r^\theta e^{\kappa_{0}^{(n)}  \theta \hat \beta + (1+\alpha)(\hat \gamma_{0}^{(n)}-  \gamma_{0}^{(n)})}\} .$  But  
 $$r e^{\kappa_{0}^{(n)} - \gamma_{0}^{(n)}}\le  r^{\bar \theta } e^{\kappa_{0}^{(n)}  \theta    + (1+\alpha)(\hat \gamma_{0}^{(n)}- \gamma_{0}^{(n)})}$$ follows from \eqref{eq:holder} as  $  \theta\le \frac {\kappa_n -\gamma_n}{\eta_n}\le \frac {\kappa_n -\gamma_n}{\kappa_n} $ holds for all $n$.     %\le  \dfrac {\kappa _n-\hat \gamma_n}{\kappa_n}.$

%\note{put in correct notation here}
We prove  the first two assertions by backwards induction on $k$ starting with $k=n$.     %Write $\zeta= (x,v)$, $\xi= (y,u)$.  We have 
We clearly have 
$$d(x_{(k-1)}, y_{(k-1)}) \le e^{-\gamma_{k-1}} d(x_{k}, y_{k}) \le  r e^{\kappa _{0}^{(n)} - \gamma _{k-1}^{(n-(k-1))} }  .$$
Moreover, we have 
\begin{align*}
%\DPP(v_{-(k-1),}, w_{-(k-1)})
%&\le 
d(\xi_{(k-1) }, &\zeta_{(k-1)})
= d\left((f_{k}^{(-1)})_* (x_{k}, v_{k}) , (f_{k}^{(-1)})_* (y_{k}, w_{k})\right) \\
&\le  d\left((f_{k}^{(-1)})_* (x_{k}, v_{k}) , (f_{k}^{(-1)})_* (x_{k}, w_{k})\right) 
\\ &  \quad\quad\quad \quad 
+
  d\left((f_{k}^{(-1)})_* (x_{k}, w_{k}) , (f_{k}^{(-1)})_* (y_{k}, w_{k})\right) \\
&\le   e^{(\hat\gamma_{k-1}-\gamma_{k-1})}\DPP(v_{k}, w_{k}) +
C_1d(x_{k} , y_{k})^\beta \\
&\le  e^{(\hat\gamma_{k-1}-\gamma_{k-1})}\left
(\DPP(v_{k}, w_{k}) +
e^{-(\hat\gamma_{k-1}-\gamma_{k-1})}C_1 d(x_{k} , y_{k})^\beta \right )\\
&\le  e^{(\hat \gamma_{k-1}-\gamma_{k-1})}\left
(1 +
e^{- (\hat \gamma_{k-1}-\gamma_{k-1})}C_1d(x_{k} , y_{k})^{\beta-\hat \beta}  \right) \\&\quad\quad \cdot
\max \{ d(x_{k} , y_{k})^{\hat \beta} ,\DPP(v_{k}, w_{k})\}  \\
&\le  e^{ (1+\alpha) (\hat\gamma_{k-1}- \gamma_{k-1})}
\max \{ r^{\hat \beta} e^{ \hat \beta \kappa _0^{(n)} - \hat \beta \gamma_{k}^{ (n- k)} } , r^{\bar \theta } e^{ \kappa _0^{(n)}  \theta     + (1+\alpha)(\hat \gamma_{k}^{ (n- k)}- \gamma_{k}^{ (n- k)}) } \}.  \notag %\numberthis \label{eq:this guy}.
\end{align*}
%Above,     \eqref{eq:this guy} 
The last line follows from induction hypothesis and the choice of  $\delta>0$ in \eqref{eq:delta}. %is chosen so
%$$\left(1 + e^{-2\epsilon}C_1\delta^{\beta-\theta}  \right) \le e^\epsilon.$$

%\note{fix notation}

From \eqref{eq:coarsbunch} and \eqref{eq:3} we   have  %$$\kappa_0^{(-n)} \theta\le \kappa_0^{(-n)}- \gamma _{-n}^{ (n- k)}\le  \kappa_0^{(-n)}+\hat \gamma _{-n}^{ (n- k)}$$
we have  % \note{$\theta$ bunching (as opposed to $\bar \theta$ bunching) is used only here} 
\begin{align*}
 \hat \beta \kappa _0^{(n)} - \hat \beta \gamma_{k}^{ (n- k)}  & %=
% -\hat \beta \kappa_{-n}^{(n)} + \hat \beta \gamma _{-n}^{ (n- k)} \\
= \hat \beta \kappa_{0}^{(k)}  + \hat \beta \kappa_{k}^{(n-k)} -  \hat \beta\gamma _{k}^{ (n- k)} \\
&\le \hat \beta \kappa_{0}^{(k)}+   \theta   \kappa_{k}^{(n-k)}  \\
&\le  \theta   \kappa_{0}^{(n)} \\
%&=\theta \hat \beta \kappa_{0}^{(-n)} \\
&\le \theta   \kappa _0^{(n)}     + (1+\alpha)(\hat \gamma_{k}^{ (n- k)}- \gamma_{k}^{ (n- k)}).
\end{align*}
Hence  %(from the bunching criterion \eqref{eq:coarsbunch})
$$r^{\hat \beta} e^{ \hat \beta \kappa _0^{(n)} - \hat \beta \gamma_{k}^{ (n- k)} } \le   r^{\bar \theta }  e^{ \kappa _0^{(n)}  \theta     + (1+\alpha)(\hat \gamma_{k}^{ (n- k)}- \gamma_{k}^{ (n- k)}) } 
$$
%$$r^{\hat \beta } e^{-\hat \beta \kappa n + \hat \beta\gamma (n- k)  } \le r^\theta e^{-\theta \hat \beta  \kappa n+ (1+\alpha)(\gamma-\hat \gamma) (n-k)}
%$$
and the result follows. \end{proof}

\subsection{Step 1: $C^0$ convergence} %$h_n \to h$ uniformly on $W^c_0(p,1)$}
We have
\begin{lemma}  $h_n \to h$ uniformly on $W^c_0(p,1)$. %= W^c_0(p)$.
\end{lemma}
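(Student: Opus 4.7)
The plan is to exploit the fact that the \emph{exact} stable holonomy image $y := h(x) \in W^c_0(q) \cap W^s_0(x)$ pushes forward correctly: by property (6) of Proposition \ref{prop:HPS}, $y_n \in W^c_n(q_n) \cap W^s_n(x_n)$ for every $n \geq 0$. Thus $y_n$ is itself a (true) stable holonomy image of $x_n$ onto $W^c_n(q_n)$, and we want to compare it to the approximate holonomy image $\pi_n(x_n) \in W^c_n(q_n)$, then pull the comparison back by $f_n^{(-n)}$ along the center leaf $W^c_n(q_n)$.

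First, by Proposition \ref{prop:HPS}(5) and the choice of $R_0 \leq \rho_0 < 1$, the stable contraction gives $d(x_n, y_n) \leq e^{\kappa_0^{(n)}} d(x,y) \leq \rho_0$, so in particular $y_n \in W^s_n(x_n, 1)$ and the approximating map $\pi_n = \pi_{x_n,y_n,n}$ is defined. Property (\ref{item1}) of the approximate holonomies then yields
\[ d(\pi_n(x_n), y_n) \leq C_2\, d(x_n, y_n) \leq C_2\, e^{\kappa_0^{(n)}}\, d(x,y). \]

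Second, since both $\pi_n(x_n)$ and $y_n$ lie on the single center leaf $W^c_n(q_n)$, and $h_n(x) = f_n^{(-n)}(\pi_n(x_n))$, $y = f_n^{(-n)}(y_n)$, the lower bound in Proposition \ref{prop:HPS}(3) applied iteratively gives
\[ d(h_n(x), y) \leq e^{-\gamma_0^{(n)}} d(\pi_n(x_n), y_n) \leq C_2\, e^{\kappa_0^{(n)} - \gamma_0^{(n)}}\, d(x,y). \]

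Finally, the standing hypothesis $\kappa'_n < \gamma'_n$ together with $\inf|\kappa'_n - \gamma'_n|>0$ and $\epsilon'<|\kappa'_n-\gamma'_n|/100$ forces $\sup_n(\kappa_n - \gamma_n) < 0$, so $\kappa_0^{(n)} - \gamma_0^{(n)} \to -\infty$ linearly in $n$. Since $d(x,y) \leq \rho_0$ is uniformly bounded for $x \in W^c_0(p,1)$, we obtain exponential and uniform convergence $h_n(x) \to y = h(x)$ on $W^c_0(p,1)$. There is no real obstacle here: this $C^0$ statement is essentially the definition of the approximate holonomies combined with the two elementary contraction estimates in opposite directions. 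The genuine technical work will come later, when controlling $Dh_n$ requires the refined bunching conditions \eqref{eq:coarsbunch}--\eqref{eq:3} and the inductive estimate of Lemma \ref{lem:1,1}.
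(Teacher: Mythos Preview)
Your argument is correct and follows essentially the same route as the paper: identify $y_n = h^s_{p_n,q_n,n}(x_n)$, use stable contraction to bound $d(x_n,y_n)\le e^{\kappa_0^{(n)}}\rho(p,q,0)$, apply property~\ref{item1} of $\pi_n$ to get $d(\pi_n(x_n),y_n)\le C_2 e^{\kappa_0^{(n)}}$, and then pull back along the common center leaf using the lower bound in Proposition~\ref{prop:HPS}(3) to obtain $d(h_n(x),h(x))\le C_2 e^{\kappa_0^{(n)}-\gamma_0^{(n)}}$. The only cosmetic difference is that you carry the factor $d(x,y)$ explicitly while the paper absorbs it into $\rho(p,q,0)\le 1$.
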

\begin{proof}    %\note{proof using the next lemma}
First  (by invariance of $W^s_n$-manifolds) we have 
$f^{(-n)} _n\circ h^s_{p_{n}, q_{n},n} \circ f^{(n)} _0= h^s_{p,q,0}$. For $x\in W^c_0(p,1)$
$$d\left(x_{n}, f^{(n)}_0(h(x))\right)= d(x_{n}, h^s_{p_{n}, q_{n},n}(x_{{n}}))\le  e^{\kappa_0^{(n)}}  \rho(p,q,0)\le e^{\kappa_0^{(n)}} %\quad \quad {\red C_1 e^{-\kappa n}}
.$$
By property \ref{item1} of the maps $\pi_{n}$, 
\begin{align*}
d(h_n(x), h(x))
	&= d\left(f^{(-n)}_0( \pi_{n} (x_{n})), f^{(-n)}_n(h^s_{p_{n}, q_{n},n}(x_{n}))\right)\\
	&\le e^{ -\gamma_{0}^{(n)} } d\left(\pi_{n} (x_{n}), h^s_{p_{n}, q_{n},n}(x_{n})\right)\\	
	&\le C_2 e^{{\kappa_0^{(n)}} -\gamma_{0}^{(  n)} }. \qedhere	
\end{align*}
%It follows that the sequence $h_n$ converges  uniformly to $h$ on $W^c_0(p,1)$.  
\end{proof}

\subsection{Step 2: Convergence of the projectivized derivative.} 
Consider now the projectivized derivatives $(h_n)_*\colon S W^c_0(p,1) \to S W^c_0(q).$ 
% To reduce notation, we write $(h_n)_* = (h_n)_*$.  We use similar notation for  $f_n$ and $\pi_n$ to indicate the projectivized derivatives.   
 We   show  that the sequence $(h_n)_*$ is Cauchy.  Set 
%{\red $$L_1= C_2 + \sum_{n=0}^\infty 2C_2 C_1 e^{(-\theta \hat \beta\beta \kappa  + 2\epsilon)n}.$$}
$$L_1= C_2 + \sum_{n=0}^\infty 3C_2 C_1 e^{\omega n} .$$ % \kappa _0^{(n)} \theta \hat \beta  + (1+\alpha)(\hat\gamma_0^{(n)}- \gamma_0^{(n)}) }.$$

\begin{lemma} \label{lem:5}
The family of maps $(h_n)_*\colon S W^c_0(p) \to S W^c_0(q) $ is uniformly Cauchy on $S W^c_0(p,1)$.  

Moreover, defining 
$h_*\colon S W^c_0(p,1) \to S W^c_0(q)$ to be the limit $h_*= \lim_{n\to \infty} (h_n)_*$, 
for $(x,v)\in SW^c_0(p,1)$ we have 
$$d((x,v), h_* (x,v))\le  L_1 d(x, h(x) ) ^{\bar \theta}.  $$%  \quad\quad {\red \rho(p,q,0)^{ \bar \theta }}.$$
\end{lemma}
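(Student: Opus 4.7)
The plan is to show that $\{(h_n)_*\}$ is uniformly Cauchy by proving the telescope bound
\[
d\bigl((h_n)_*\xi,\,(h_{n+1})_*\xi\bigr)\le 3 C_2 C_1\, e^{\omega n}\, d(x,h(x))^{\bar\theta}
\]
for each $\xi=(x,v)\in SW^c_0(p,1)$. Combined with the base case $d(\xi,(h_0)_*\xi)=d(\xi,(\pi_0)_*\xi)\le C_2\, d(x,h(x))^{\bar\theta}$, which follows from property \ref{item2} of $\pi_{p,q,0}$ after invoking the consistency property \ref{item3} with $p'=x$ and $q'=h(x)\in W^s_0(x)\cap W^c_0(q)$, summing the geometric series yields the claimed $L_1$-estimate on $d(\xi,h_*\xi)$ and uniform convergence of $(h_n)_*$ to $h_*$.

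To obtain the telescope estimate, note first that $h_n(x),h_{n+1}(x)\in W^c_0(q)$, placing us in the same-leaf framework of Lemma \ref{lem:1,1}. I would push both $(h_n)_*\xi$ and $(h_{n+1})_*\xi$ forward by $(f^{(n+1)}_0)_*$; by the chain rule these become $(f_n)_*(\pi_n)_*\xi_n$ (based at $f_n(\pi_n(x_n))$) and $(\pi_{n+1})_*(f_n)_*\xi_n$ (based at $\pi_{n+1}(x_{n+1})$), both lying in $SW^c_{n+1}(q_{n+1})$. Inserting $(f_n)_*\xi_n$ as a pivot, using the uniform $C^1$-bound $\|D(f_n)_*\|\le C_1$ for the first piece and property \ref{item2} of $\pi_{n+1}$ (via consistency at base point $x_{n+1}$) for the second, one gets
\[
d\bigl((f_n)_*(\pi_n)_*\xi_n,\,(\pi_{n+1})_*(f_n)_*\xi_n\bigr)
\le C_1 C_2\, d(x_n,h(x)_n)^{\bar\theta}+C_2\, d(x_{n+1},h(x)_{n+1})^{\bar\theta}
\le 2 C_1 C_2\, e^{\bar\theta\kappa_0^{(n)}}\, d(x,h(x))^{\bar\theta}.
\]
Invoking the $C^1$-bound on $(f_n)_*$ rather than its H\"older bound is essential here to avoid picking up a stray $\beta$-factor on the right.

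Finally, I would apply Lemma \ref{lem:1,1} (with its $n$ replaced by our $n+1$), taking $r=K\,d(x,h(x))$ for a fixed constant $K$ depending only on $C_1, C_2, \bar\kappa, \theta, \bar\theta$ and large enough that the two hypotheses $d(x_{n+1},y_{n+1})\le r e^{\kappa_0^{(n+1)}}$ and $d(\xi_{n+1},\zeta_{n+1})\le r^{\bar\theta} e^{\kappa_0^{(n+1)}\theta}$ follow from the estimate above. The choice of $\rho_0$ in \eqref{llklklklkl} guarantees $r\le\delta$, and the auxiliary hypothesis $d(x_k,y_k)\le\delta$ at intermediate times $0\le k\le n+1$ holds because each $f^{(k)}_0(h_n(x))$ and $f^{(k)}_0(h_{n+1}(x))$ sits on $W^c_k(q_k)$ within $O(d(x,h(x)))\le O(\rho_0)\le\delta$ of $h^s(x_k)$, using property \ref{item1} and Proposition \ref{prop:HPS}(3). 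The lemma then delivers
\[
d\bigl((h_n)_*\xi,(h_{n+1})_*\xi\bigr)\le r^{\bar\theta}\, e^{\kappa_0^{(n+1)}\theta+(1+\alpha)(\hat\gamma_0^{(n+1)}-\gamma_0^{(n+1)})}\le 3C_2C_1\, e^{\omega n}\, d(x,h(x))^{\bar\theta},
\]
as required.

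The principal technical hurdle is the decomposition at iterate $n+1$: selecting the pivot $(f_n)_*\xi_n$ and leveraging the Lipschitz rather than H\"older bound on $(f_n)_*$, so that the $\bar\theta$-H\"older exponent on the right is preserved. Verifying the intermediate-time hypothesis of Lemma \ref{lem:1,1} and matching the precise constant $3C_2C_1$ are routine consequences of the choices of $\rho_0$ and $R_0$ fixed in the preliminaries.
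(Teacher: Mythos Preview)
Your approach is essentially the paper's: telescope $(h_n)_*$, push the comparison to a high iterate, and invoke Lemma~\ref{lem:1,1} to transport the estimate back to level $0$. The only cosmetic difference is that you compare at level $n+1$ while the paper compares at level $n$ (it applies $(f_{n+1}^{(-1)})_*$ to $(\pi_{n+1})_*\xi_{n+1}$ rather than $(f_n)_*$ to $(\pi_n)_*\xi_n$); this is immaterial.

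There is one genuine imprecision. You assert a ``uniform $C^1$-bound $\|D(f_n)_*\|\le C_1$'' for the first piece and call it ``essential \dots\ to avoid picking up a stray $\beta$-factor.'' But $(f_n)_*\colon S\R^k\to S\R^k$ is \emph{not} globally Lipschitz: the paper only records that for \emph{fixed} base point the fiber map $(D_xf_n)_*$ is $C^1$ with norm $\le C_1$, while the base dependence is merely $\beta$-H\"older (item~\ref{ji}). Hence the first piece must itself be split, exactly as the paper does for $(f_{n+1}^{(-1)})_*$: writing $(\pi_n)_*\xi_n=(\pi_n(x_n),w'')$,
\[
d\bigl((f_n)_*(\pi_n(x_n),w''),\,(f_n)_*(x_n,v_n)\bigr)\ \le\ C_1\, d(w'',v_n)\ +\ C_1\, d(\pi_n(x_n),x_n)^\beta.
\]
There \emph{is} a $\beta$-term; it is harmless because $\beta>\bar\theta$ forces $C_2^\beta e^{\beta\kappa_0^{(n)}}d(x,y)^\beta\le C_2 e^{\bar\theta\kappa_0^{(n)}}d(x,y)^{\bar\theta}$, not because it is absent. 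With this correction your estimate at level $n+1$ becomes $\le 3C_1C_2\, e^{\bar\theta\kappa_0^{(n)}}d(x,y)^{\bar\theta}$ and the remainder of your argument goes through. (A secondary point: your constant $K$ in $r=K\,d(x,h(x))$ picks up an extra factor of order $e^{-\bar\kappa}$ from the base estimate at level $n+1$, so it may exceed the paper's $(3C_1C_2)^{1/\bar\theta}$; this is harmless provided $\rho_0$ is taken correspondingly small.)
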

%{\red
%We remark that is seems the exponent $\theta\beta$ can't be reduced in the above lemma.  
%}
\begin{proof} %\note{fix ntation}
With $\xi= (x,v)$ let  $y = h(x)$ and $\zeta^n =(y^n, w^n)= (h_n)_*(\xi)$.  

 We have 
 $$d(y^n, y^{n+1}) 	= d\left(  f^{(-n)}_{n} (\pi_{n} (x_{n})),   f^{(-n-1)}_{(n+1)}(\pi_{n+1}(x_{n+1}))\right)$$
 and 
 \begin{align*}
 d\left(  \pi_{n} (x_{n}), f_{n+1}^{(-1)}( \pi_{n+1}(x_{n+1}))\right)
  &\le   d\left( x_{n},  \pi_{n} (x_{n})\right)+ \left( f_{n+1}^{(-1)}(x_{n+1} ), f_{n+1}^{(-1)}(\pi_{n+1}(x_{n+1}))\right)\\
    &\le  C_2 e^{\kappa _0^{(n)}}d(x,y)  + C_1 C_2 e^{\kappa_0^{(n+1)}}d(x,y) \\
&\le 2C_1 C_2 e^{\kappa_0 ^{(n)} }d(x,y).
%\\ \\
%   &{\red= C_2 e^{\kappa _0^{(n)}} \rho(p,q,0) + C_1 C_2 e^{\kappa_0^{(n+1)}}\rho(p,q,0)} \\
%&{\red \le 2C_1 C_2 e^{\kappa_0 ^{(n)} } \rho(p,q,0)}
 \end{align*}
 
 Similarly, %
\begin{align*}%\DPP(w^n, w^{n+1}) &
 d(\zeta^n, \zeta^{n+1}) &= 
%
%= \DPP\left( (D_{y^n_{-n}} f^{n}_{-n})_* (D_{x_{-n}}\pi_{-n})_* (v_{-n}),   (D_{y^{n+1}_{-n-1}} f^{n+1}_{-n-1})_* (D_{x_{-n-1}} \pi_{-n-1})_* (v_{-n-1})\right)\\ 
  d\left( (f^{(-n)}_{n})_* (\pi_{n})_* (\xi_{n}),   (f^{(-n-1)}_{n+1})_* (\pi_{n+1})_* (\xi_{n+1})\right)
\end{align*}
%\note{{\bf Not} using \ref{pooo} of $\pi$}
 and  %\note{\red Note that working harder we could make second term $  \sim  e^{ \theta \kappa_0^{ (n+1)}}$; but 
% invoking Lemma \ref{lem:1,1} we cant remove  $\theta\beta$-bunching.}
 \begin{align*}
d( (\pi_{n})_* (\xi_{n}),  &  (f^{(-1)}_{n+1})_* (\pi_{n+1})_* (\xi_{n+1})) \\
&\le 
d\left(  \xi_{n}, (\pi_{n})_* (\xi_{n})  \right) + d \left((f_{n+1}^{(-1)})_*   \xi_{n+1},   ( f_{n+1}^{(-1)})_* (\pi_{n+1})_* (\xi_{n+1})\right).\end{align*}	
With $(\pi_{n+1})_* (x_{n+1}, v_{n+1})= (y',w')$ we have 
 \begin{align*}
d  ((f_{n+1}^{(-1)})_* &   \xi_{n+1},    ( f_{n+1}^{(-1)})_* (\pi_{n+1})_* (\xi_{n+1}) )=
d  ((f_{n+1}^{(-1)})_*   (x_{n+1}, v_{n+1}) ,   ( f_{n+1}^{(-1)})_* ( y',w')  )\\
&\le d  ((f_{n+1}^{(-1)})_*   (x_{n+1}, v_{n+1}) ,   ( f_{n+1}^{(-1)})_* ( x_{n+1},w')  )+ 
d  ((f_{n+1}^{(-1)})_*   (x_{n+1}, w') ,   ( f_{n+1}^{(-1)})_*(  y',w')  )\\
&\le C_1 d( v_{n+1}, w') + C_1 d(x_{n+1}, y')^\beta\\
&\le C_1 C_2 e^{\kappa_0 ^{(n+1)} \bar \theta} d(x,y)^{\bar \theta}  + C_1 C_2^\beta e^{\beta \kappa_0 ^{(n+1)} } d(x,y)^\beta\\
&\le 2 C_1 C_2  e^{\bar \theta \kappa_0 ^{(n+1)} }d(x,y)^{\bar \theta  }
%\\ \\
%&{\red \le C_1 C_2 e^{\kappa_0 ^{(n+1)} } \rho(p,q,0)^{\bar \theta}  + C_1 C_2^\beta e^{\beta \kappa_0 ^{(n+1)} } \rho(p,q,0)^\beta}\\
%&{\red \le 2 C_1 C_2  e^{\beta \kappa_0 ^{(n+1)} } \rho(p,q,0)^{\bar \theta  }}
\end{align*}	
whence
 \begin{align*}
d( (\pi_{n})_* (\xi_{n}),  &  (f^{(-1)}_{n+1})_* (\pi_{n+1})_* (\xi_{n+1})) \\
&\le C_2 e^{\bar  \theta \kappa_0^{( n)}}d(x,y)^{\bar \theta} 
+  2C_1 C_2  e^{ \bar \theta   \kappa_0^{ (n+1)}} d(x,y)^{\bar \theta}\\
%&\le 2C_2 C_1  e^{-\theta\beta \kappa n} \rho(p,q,0)^{\theta\beta}\\
&\le 3C_2 C_1  e^{\bar \theta  \kappa _0^{(n)}} d(x,y)^{\bar \theta }.
%\\ \\
%&{\red \le C_2 e^{\bar  \theta \kappa_0^{( n)}} \rho(p,q,0)^{\bar \theta} 
%+  2C_1 C_2  e^{ \beta   \kappa_0^{ (n+1)}} \rho(p,q,0)^{\bar \theta}}\\
%%&\le 2C_2 C_1  e^{-\theta\beta \kappa n} \rho(p,q,0)^{\theta\beta}\\
%&{\red \le 3C_2 C_1  e^{\bar \theta  \kappa _0^{(n)}} \rho(p,q,0)^{\bar \theta }.}
\end{align*}

%where the last line follows from Lemma \ref{lem:1,1} as 
%{\red $$ C_2 \rho(p,1,0)^\theta \le \delta ^{1/\theta}$$ by construction.}
%Moreover, {\red as 
%$$\le C_2 e^{-\theta \kappa n} \rho(p,q,0)^\theta
%+  C_2 e^{3\epsilon}e^{-\theta \hat \beta \kappa (n+1)} \rho(p,q,0)^{\theta \hat \beta}\le e^{-3n\epsilon} \delta ^{1/\theta}$$}

%Moreover, from the choice of $r_1$ and the bunching hypotheses \eqref{eq:bunch} we have 
%$$2C_2 C_1  e^{-\theta\beta \kappa n} \rho(p,q,0)^{\theta\beta}\le e^{-3n\epsilon\theta\inv} \delta^{1/\theta}.$$
%
%As both $(\pi_n)_* (\zeta_n)$ and $ (f^{-1})_* (\pi_{n+ 1})_* (\zeta_{n+1})$ are both contained in $S W^c_n(q_n)$ f
From  Lemma \ref{lem:1,1}  (with $r= (3C_1C_2)^{\bar \theta \inv}d(x,y) $) and  the choice of $\rho_0$ it follows that 
\begin{align}\label{eqMMM}
d(\zeta^n, \zeta^{n+1} ) \le   e^{ \kappa_0^{(n)}  \theta   + (1+\alpha)(\hat \gamma_0^{(n)}- \gamma_0^{(n)}) } 3C_2 C_1  d(x,y)^{\bar \theta}. 
  \end{align}
It follows that $(h_n)_* $ is uniformly Cauchy on $SW^c_0(p,1)$ .

%\begin{align*}
%
%d((h_n)_* (\zeta), (h_{n+1})_* (\zeta)) 
%	&= d\left( ( f^{-n})_* (\pi_n)_* (\zeta_n),  ( f^{-n-1})_* (\pi_{n+1})_* (\zeta_{n+1})\right)	\\
%%	&\le e^{3\epsilon n} 
%%		\left(d\left(  \zeta_{n}, (\pi_n)_* (\zeta_n)\right)+ d\left( \zeta_{n},(f^{-1})_* (\pi_n)_* (\zeta_{n+1})\right)\right)^\theta \\
%&\le e^{3\epsilon n} 
%		\left(d\left( (\pi_n)_* (\zeta_n) ,(f^{-1})_* (\pi_n)_* (\zeta_{n+1})\right)\right)^\theta \\
%	&\le e^{3\epsilon n} 2C_2 C_1  e^{-\theta \hat \beta\beta \kappa n} \rho(p,q,0)^{\theta \hat \beta\beta}.
%\end{align*}
%as $\theta ^2\beta\kappa >3\epsilon $, it follows that $(h_n)_* $ is uniformly Cauchy.  

Moreover, for any $\xi=(x,v)\in SW^c_0(p,1)$ we have  
\begin{align*}
d(\xi, h_* (\xi) ) &\le d(\xi, (\pi_0)_* \xi) + \sum_{n=0}^\infty d(\zeta^n, \zeta^{n+1} ) \\
%\quad {\red d((h_n)_*(\xi), (h_{n+1})_*(\xi)} \\
%&{\red \le \max \{\rho(p,q,0),C_2 \rho(p,q,0)^\theta\} + \sum_{i=0}^n d((h_n)_*(\zeta), (h_{n+1})_*(\zeta). }\\
&\le C_2d(x,y)^{\bar \theta } + \sum_{n=0}^\infty e^{ \kappa_0^{(n)}  \theta   + (1+\alpha)(\hat \gamma_0^{(n)}- \gamma_0^{(n)}) } 3C_2 C_1   d(x,y)^{\bar \theta}.  %\quad {\red d((h_n)_*(\xi), (h_{n+1})_*(\xi). }}
\end{align*}
Thus, with $L_1$ as above, 
%$$L= C_2 + \sum_{0}^\infty 2C_2 C_1 e^{(-\theta \hat \beta\beta \kappa  + 2\epsilon)n} \rho(p,q,0)^{\theta \hat \beta\beta}$$
for any $\xi\in SW^c_0(p,1)$ we have 
\begin{align*}
&d(\xi, h_* (\xi) ) \le  L _1d(x,y) ^{\bar \theta }.  \qedhere
\end{align*}\end{proof}

%Let $h_*\colon S W^c_0(p,1) \to S W^c_0(q)$ be the limit $h_*= \lim_{n\to \infty} (h_n)_*$
Note that the convergence of the projectivized derivative of the stable holonomies  in Lemma \ref{lem:5} is independent of the choice of $p,q\in \R^k$ or $n\in \Z$ in Theorem \ref{thm:difflholonomies}. 
Thus for all $n\in \Z$, $p'\in \R^k$, and $q'\in W^s_{n} (p', R_0)$ let $(h^s_{p',q',n})_*$ denote the projectivized derivative constructed as above. 
We have for all $\xi'= (x',v')\in SW^c_n(p',1)$ that $$d(\xi,(h^s_{p',q',n})_* (\xi) ) \le  L _1d(x',h^s_{p',q',n}(x')) ^{\bar \theta  }.$$ 
 To show that the holonomies are $C^1$, we next show that each  $(h^s_{p',q',n})_*$ coincides  with  the projectivization of a continuous $D h^u_{p',q',n}\colon TW^c_n(p',1)\to TW^c_n(q')$.  % with  $C^1$-norm controlled uniformly in all parameters.  

\subsection{Step 3: The sequence of maps $ Dh_n$ is uniformly Cauchy.}
We return to the notation in Step 2.  In particular, we recall our distinguished $p,q\in \R^k$ and the maps $h_n$ approximating $h= h^s_{p,q,0}$. 
%We need only show that $\|D _xTh_n v\||$ converges uniformly for all $(x,v)\in TD_n$

We first derive two simple distortion estimates.  
Given $\xi=(x,v)\in S W^c_0(p,1)$ with  $\xi_{n}:= (f^{(n)}_0)_*(\xi) $  let $y=h(x)$,  $\zeta =(y,w) := h_*(\xi)$, 
%\note{true}	
$\zeta_{n}=(y_{n}, w_{n}) = (f_0 ^{(n)})_* (\zeta) = (h^s_{p_{n},q_{n}, n})_* (\xi_{n}),$ %= (h^u_{x_{n},y_{n}, -n})_* (\xi_{n}),$$
	and $\hat \zeta^n= (h_n)_*(\xi)$.  Write $\hat \zeta ^n_i = (f^{(i)}_0)_*\hat \zeta ^n$. Then 
%	\note{guess} 
	$$\hat \zeta^n_{n }:= (f_0^{(n)})_*(\hat \zeta ^n)= (\hat y^n_{n}, \hat w_{n})  = (\pi_{n})_* (\xi_{n}).$$
We have \begin{equation}\label{eqKKK} d(y_{n}, \hat y^n_{n}) \le (1+ C_2 ) e^{\kappa _0^{(n)} }d(x,y).\end{equation}  
From Lemma \ref{lem:5} we have 
$$d(\xi_{n}, \zeta_{n})\le L _1e^{\bar \theta \kappa _0^{(n)}} d(x,y)^{\bar \theta }. $$  %\le  L _1e^{-\theta\beta\kappa n}.$$
Moreover $$d(\xi_{n}, \hat \zeta^n_{n}) \le C_2 e^{\bar \theta \kappa _0^{(n)}} d(x,y)^{\bar \theta }. $$
Hence, 
$$d(\zeta_{n}, \hat \zeta^n_{n})\le (C_2 +L_1 ) e^{ \bar \theta \kappa _0^{(n)} }.$$

Let $n_0$ be such that $(C_2 +L _1) e^{ \bar \theta \kappa _0^{(n_0)}}\le e^{   \theta     \kappa _0^{(n_0)}}	\delta^{ \bar \theta}$
so that for 
$n\ge n_0$ we have 
\begin{equation}\label{eqLLL}d(\zeta_{n}, \hat \zeta^n_{n}) \le  % (C_2 +L _1) e^{ \theta\beta \kappa _0^{(n)}}\le 
e^{     \theta     \kappa _0^{(n)} }	\delta^{\bar \theta}. \end{equation}%{\red e^{-3n\epsilon \theta\inv}	\delta^{1/\theta}.}$$
 
%For such $n\ge n_0$ and $0\le k\le n$ 

%For $0\le k\le n$ write $$\hat \zeta^n_{-k} =(\hat y^n_{-k}, \hat w ^n_{-k})= (f_{-k-1})_* \circ\dots \circ (  f_{-n})_*  (\hat \zeta^n_{-n}) = (f_0^{-k} )_* (f_{-n}^n)_*(\hat \zeta^n_{-n}).$$ %with similarly defined $\hat \zeta_{-n,-k}$.  Then writing $\zeta_{-n,0} '= \zeta '$ we have $\zeta_{-n,-k}' =( f_0 ^{-k} )_*\zeta'$ and  $\zeta_{-n,-n} ' = \zeta_{-n} '.$ 

Given $\xi= (x,v) \in T\R^k$ define $\|\xi\| = \|v\|$.  
For each $i$,  the map $S \R^k \to \R$ given by $\zeta\to \log \|Df_i(\zeta)\|$  is $\beta$-H\"older on $S\R^k$ with uniform constant $C_3$.  Recall our $\hat \theta <\theta $ and $\omega$ with 
$\hat \omega = \sup\kappa_n\hat \hat  \theta + (1+\alpha)(\hat\gamma _n-  \gamma_n)<0$.  %$\sup\frac{(1+\alpha)(\hat \gamma _n- \gamma_n)}{ - \kappa_n}<\hat \theta<\theta$%\framebox{$\red{C_3= .....}$.}  
%{\red Let $$K:= %\exp\left( 
%\sum _{j=0}^{\infty} C_3 \left((C_2+ L_1) e^{(-\theta^3\beta\kappa + 4\epsilon)j} \right) ^\beta%\right)
%.$$}
Let $$K_1:= %\exp\left( 
\sum _{k=0}^{\infty}  C_3 \left(\delta^{\bar \theta} e^{  \hat \omega k } %\hat \theta \hat \beta \kappa_0^{(k)}   + (1+\alpha)(\hat \gamma_0^{(k)} -  \gamma_0^{(k)}) }
 \right) ^\beta \\
.$$
%{\red \begin{lemma}\label{lem:bounded}
%%There is a $K>1$ so that f
%For all $\zeta \in S W^c_0(p,1)$ and $n\ge n_0$ 
%{\red $$\exp (- Ke^{-\beta\epsilon n}) \le \dfrac{ \prod_{i=0} ^{n-1} \|D(f_i )  \zeta'_{n,i}\|}
%	{ \prod_{i=0} ^{n-1} \|D(f_i )  \hat \zeta_{n,i}\|}
%\le   \exp(e^{- \beta \epsilon n} K).$$}
%$$\exp (- Ke^{- \beta \kappa(\theta \hat \beta-\hat \theta \hat \beta) n}) \le \dfrac{ \prod_{i=0} ^{n-1} \|D(f_i )  \zeta'_{-n,-i}\|}
%	{ \prod_{i=0} ^{n-1} \|D(f_i )  \hat \zeta_{-n,-i}\|}
%\le   \exp(Ke^{- \beta \kappa(\theta \hat \beta-\hat \theta \hat \beta) n}).$$
%\end{lemma}
%(Note in particular that the middle ratio goes to $1$ as $n\to \infty$.) 
%\begin{proof}
%Recalling Lemma \ref{lem:1,1}, for $n\ge n_0$ we have 
%\begin{align*}
%\left| \log \dfrac{ \prod_{i=0} ^{n-1} \|D(f_i )  \zeta'_{-n,-i}\|}
%	{ \prod_{i=0} ^{n-1} \|D(f_i )  \hat \zeta_{-n,-i}\|}\right|
%&= \left |
%	\sum _{i=0}^{n-1}  \log \|D(f_i )  \zeta'_{-n,-i}\| -  \log \|D(f_i )  \hat \zeta_{-n,-i}\|
%\right |\\
%&\le  
%	\sum _{i=0}^{n-1} C_3 d (\zeta'_{-n,-i},\hat \zeta_{-n,-i})^\beta
%\\
%&\le  
%	\sum _{k=1}^{n} C_3 \left(\delta^\theta e^{-\kappa  \theta \hat \beta n + 3(\gamma-\hat \gamma) (n-k)} \right) ^\beta \\
%	\\&\quad\quad {\red \sum _{j=1}^{n} C_3  \left( e^{3\epsilon (n-j)} \left((C_2 +L) e^{-\theta \hat \beta\beta\kappa n}\right)^\theta \right) ^\beta}\\
%	\\&= 	\sum _{j=0}^{n-1} C_3 \left(\delta^\theta e^{-\kappa  \theta \hat \beta n + 3(\gamma-\hat \gamma) j} \right) ^\beta \\
%&\le  e^{- \beta \kappa(\theta \hat \beta-\hat \theta \hat \beta) n} K.		\qedhere
%\end{align*}
%\end{proof}}

\begin{lemma}\label{lem:bounded}
%There is a $K>1$ so that f
%For all $\xi  \in S W^c_0(p,1)$ and 
For all $n\ge n_0$ 
$$\exp (- K_1e^{  \beta  (\theta -\hat \theta  )  \kappa_0^{(n)} }) \le \dfrac{\prod_{i=0} ^{n-1} \|Df_{i}  \hat \zeta^n_{i}\|}{\prod_{i=0} ^{n-1} \|Df_{i}  \zeta_{i}\|}
\le   \exp(K_1e^{  \beta  (\theta -\hat \theta  )  \kappa_0^{(n)} }).$$
\end{lemma}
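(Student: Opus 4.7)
The plan is to bound the logarithm of the ratio directly using the $\beta$-H\"older continuity of the map $\zeta \mapsto \log\|Df_i(\zeta)\|$ on $S\R^k$: with uniform constant $C_3$,
$$\left|\log \dfrac{\prod_{i=0} ^{n-1} \|Df_{i}  \hat \zeta^n_{i}\|}{\prod_{i=0} ^{n-1} \|Df_{i}  \zeta_{i}\|}\right| \le \sum_{i=0}^{n-1} C_3\, d(\hat\zeta^n_i, \zeta_i)^\beta,$$
so the proof reduces to summable control on $d(\hat\zeta^n_i, \zeta_i)$ across all $0 \le i \le n-1$.

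The central input will be Lemma \ref{lem:1,1}, which I plan to apply with parameter $r = \delta$ to the pair $(\zeta, \hat\zeta^n)$ based respectively at $y = h(x)$ and $\hat y^n = h_n(x)$ on the common center leaf $W^c_0(q)$. The terminal hypotheses at time $n$ are supplied by \eqref{eqKKK} and \eqref{eqLLL}: the former, combined with the choice of $\rho_0$ in \eqref{llklklklkl}, yields $d(y_n, \hat y^n_n) \le \delta\, e^{\kappa_0^{(n)}}$; the latter gives the tangent bound $d(\zeta_n, \hat\zeta^n_n) \le \delta^{\bar\theta} e^{\theta\kappa_0^{(n)}}$ exactly for $n \ge n_0$. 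The auxiliary hypothesis $d(y_k, \hat y^n_k) \le \delta$ for all $0 \le k \le n$ can be checked by propagating the terminal estimate backwards along the center leaf: Proposition \ref{prop:HPS}(3) gives $d(y_k, \hat y^n_k) \le e^{-\gamma_k^{(n-k)}} d(y_n, \hat y^n_n) \le (1+C_2)\, e^{\kappa_0^{(n)} - \gamma_k^{(n-k)}}\, d(x,y)$, and since the inequality $\kappa_j < \gamma_j$ holds uniformly (hence $\kappa_0^{(n)} - \gamma_k^{(n-k)} \le 0$) this is at most $(1+C_2)\rho_0 \le \delta$. Lemma \ref{lem:1,1} then outputs
$$ d(\hat\zeta^n_i, \zeta_i) \le \delta^{\bar\theta} \exp\!\bigl(\theta \kappa_0^{(n)} + (1+\alpha)(\hat\gamma_i^{(n-i)} - \gamma_i^{(n-i)})\bigr) \qquad (0 \le i \le n).$$

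To extract the prefactor $e^{\beta(\theta - \hat\theta)\kappa_0^{(n)}}$ claimed by the lemma, I will split $\theta = \hat\theta + (\theta - \hat\theta)$ and $\kappa_0^{(n)} = \kappa_0^{(i)} + \kappa_i^{(n-i)}$, bounding the exponent above by
$$\kappa_0^{(n)}(\theta - \hat\theta) + \kappa_0^{(i)}\hat\theta + \bigl[\kappa_i^{(n-i)}\hat\theta + (1+\alpha)(\hat\gamma_i^{(n-i)} - \gamma_i^{(n-i)})\bigr] \le \kappa_0^{(n)}(\theta - \hat\theta) + (n-i)\hat\omega,$$
using $\kappa_0^{(i)}\hat\theta \le 0$ and the definition of $\hat\omega$. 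Raising to the $\beta$ power, multiplying by $C_3$, and summing in $i$ under the change of index $k = n-i$ produces a geometric tail with ratio $e^{\beta\hat\omega} < 1$ that is dominated by $K_1$. The resulting bound on the log-ratio is $K_1 e^{\beta(\theta-\hat\theta)\kappa_0^{(n)}}$, and exponentiation completes the proof.

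The only genuinely substantive step is verifying the intermediate hypothesis of Lemma \ref{lem:1,1} along the entire orbit; this rests on the uniform inequality $\kappa_j < \gamma_j$ (guaranteed by the choice of $\epsilon'$ relative to the gap $|\kappa'_n - \gamma'_n|$) together with the quantitative smallness of $\rho_0$ imposed by \eqref{llklklklkl}. Everything else is algebraic bookkeeping with the bunching exponents introduced in \eqref{eq:holder}--\eqref{eq:3}.
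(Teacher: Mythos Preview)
Your proposal is correct and follows essentially the same approach as the paper: bound the log-ratio by the H\"older sum $\sum C_3\, d(\hat\zeta^n_i,\zeta_i)^\beta$, feed the terminal estimates \eqref{eqKKK} and \eqref{eqLLL} into Lemma~\ref{lem:1,1} with $r=\delta$, and then split $\theta\kappa_0^{(n)}$ as $(\theta-\hat\theta)\kappa_0^{(n)}+\hat\theta\kappa_0^{(i)}+\hat\theta\kappa_i^{(n-i)}$ to peel off the prefactor and dominate the remaining sum by the geometric series defining $K_1$. If anything, you are slightly more explicit than the paper in verifying the intermediate hypothesis $d(y_k,\hat y^n_k)\le\delta$ of Lemma~\ref{lem:1,1}, which the paper leaves implicit.
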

(Note in particular that the middle ratio goes to $1$ as $n\to \infty$.) 
\begin{proof}
Recalling Lemma \ref{lem:1,1}  %\ref{lemmapartaaa}, 
(with $r=\delta$ and estimates \eqref{eqKKK} and \eqref{eqLLL}), for $n\ge n_0$ we have 
\begin{align*}
\left| \log \dfrac{\prod_{i=0} ^{n-1} \|Df_{i}  \hat \zeta^n_{i}\|}{\prod_{i=0} ^{n-1} \|Df_{i}  \zeta_{i}\|}
\right|
&= \left |
	\sum _{i=0}^{n-1}  \log  \|Df_{i}  \hat \zeta^n_{i}\| -  \log \|Df_{i}  \zeta_{i}\| 
\right |\\
&\le  
	\sum _{i=0}^{n-1} C_3 d (\hat \zeta^n_{i},\zeta_{i})^\beta
\\
&\le  
	\sum _{i=0}^{n-1} C_3 \left(\delta^{\bar \theta} e^{ \kappa_{0}^{(n)}  \theta    + (1+\alpha)(\hat \gamma_i^{(n-i)}-  \gamma_i^{(n-i)})} \right) ^\beta \\
%	\\&\quad\quad {\red \sum _{j=1}^{n} C_3  \left( e^{3\epsilon (n-j)} \left((C_2 +L) e^{-\theta \hat \beta\beta\kappa n}\right)^\theta \right) ^\beta}\\
%	\\&= 	\sum _{j=0}^{n-1} C_3 \left(\delta^\theta e^{-\kappa  \theta \hat \beta n + 3(\gamma-\hat \gamma) j} \right) ^\beta \\
&\le  e^{  \beta \kappa_0^{(n)}(\theta  -\hat \theta ) } K_1.		\qedhere
\end{align*}
\end{proof}

%Similarly, letting $$ K_2 = \exp
%\left(\sum _{i=1}^{\infty } C_3   \left(\delta^\theta C_2 e^{-\theta \epsilon  i} \right) ^\beta \right)$$
%we have 
%\begin{lemma}
%For all $n>0$ %\ge n_0$ 
%$$ K_2\inv \le 
%\dfrac{\prod_{i=0} ^{n-1} \|Df_{i}    \xi_{i}\|}{\prod_{i=0} ^{n-1} \|Df_{i}  \zeta_{i}\|}
%\le    K _2 $$
%\end{lemma}
%
%\begin{proof}
%\begin{align*}
%\left| \log \dfrac{\prod_{i=0} ^{n-1} \|Df_{i}    \xi_{i}\|}{\prod_{i=0} ^{n-1} \|Df_{i}  \zeta_{i}\|}
%\right|
%&= \left |
%	\sum _{i=0}^{n-1}  \log    \|Df_{i}    \xi_{i}\| - \log \|Df_{i}  \zeta_{i}\|  \| 
%\right |\\
%&\le  
%	\sum _{i=0}^{n-1} C_3 d( \xi_{i},\zeta_{i})^\beta
%\\
%&\le  
%	\sum _{i=0}^{n-1} C_3 \left[C_2 d( x_{i},y_{i})^\theta \right]^\beta
%\\
%&\le  
%	\sum _{i=0}^{n-1} C_3   \left(\delta^\theta C_2 e^{\theta (-\kappa  n + \gamma (n-i))} \right) ^\beta \\
%&\le  
%	\sum _{i=0}^{n-1} C_3   \left(\delta^\theta C_2 e^{\theta (-\kappa  (n-i) + \gamma (n-i))} \right) ^\beta \\	
%%	\\&\quad\quad {\red \sum _{j=1}^{n} C_3  \left( e^{3\epsilon (n-j)} \left((C_2 +L) e^{-\theta \hat \beta\beta\kappa n}\right)^\theta \right) ^\beta}\\
%%	\\&= 	\sum _{j=0}^{n-1} C_3 \left(\delta^\theta e^{-\kappa  \theta \hat \beta n + 3(\gamma-\hat \gamma) j} \right) ^\beta \\
%&\le \log(  K_2)  .		\qedhere
%\end{align*}
%\end{proof}
%
Similarly, letting $$ K_2 = \exp
\left(\sum _{i=1}^{\infty } C_3     L_1^\beta  e^{\bar \kappa  \bar \theta\beta i} \right) $$
we have 
\begin{lemma}\label{lem:thisone}
For all $n>0$ %\ge n_0$ 
$$ K_2\inv \le 
\dfrac{\prod_{i=0} ^{n-1} \|Df_{i}    \xi_{i}\|}{\prod_{i=0} ^{n-1} \|Df_{i}  \zeta_{i}\|}
\le    K _2 $$
\end{lemma}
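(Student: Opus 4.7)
The proof should follow the same template as Lemma \ref{lem:bounded}, but with $\hat\zeta^n_i$ replaced by $\xi_i$ and with the decay of $d(\xi_i,\zeta_i)$ coming not from Lemma \ref{lem:1,1} but directly from the already established Lemma \ref{lem:5}. In particular, the plan is to reduce the multiplicative bound to a telescoping sum of logarithms, estimate each term using the uniform $\beta$-H\"older continuity (constant $C_3$) of the map $\zeta\mapsto \log\|Df_i\zeta\|$ on $S\R^k$, and sum a geometric series.

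More concretely, first take logarithms and write
\[
\Bigl|\log \dfrac{\prod_{i=0} ^{n-1} \|Df_{i} \xi_{i}\|}{\prod_{i=0} ^{n-1} \|Df_{i}  \zeta_{i}\|}\Bigr|\le \sum_{i=0}^{n-1} C_3\, d(\xi_i,\zeta_i)^{\beta}.
\]
Next, estimate $d(\xi_i,\zeta_i)$. The key observation, already noted in the paragraph following Lemma \ref{lem:5}, is that the projectivized stable holonomy is \emph{dynamically coherent}: we have $\zeta_i=(h^s_{p_i,q_i,i})_*(\xi_i)$ for every $i\ge 0$. Applying the last displayed inequality of Lemma \ref{lem:5} at level $i$ (to the points $x_i\in W^c_i(p_i,1)$ and $y_i=h^s_{p_i,q_i,i}(x_i)$) gives
\[
d(\xi_i,\zeta_i)\le L_1\, d(x_i,y_i)^{\bar\theta}\le L_1\bigl(e^{\kappa_0^{(i)}}d(x,y)\bigr)^{\bar\theta}\le L_1\, e^{\bar\theta\,\bar\kappa\, i},
\]
where the last bound uses $\kappa_0^{(i)}\le i\bar\kappa$ together with $d(x,y)\le\rho_0\le 1$.

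Substituting back yields
\[
\sum_{i=0}^{n-1} C_3\, d(\xi_i,\zeta_i)^{\beta}\le \sum_{i=0}^{n-1} C_3\, L_1^{\beta}\, e^{\bar\theta\,\beta\,\bar\kappa\, i},
\]
and since $\bar\kappa<0$ this is bounded by $\log K_2$, uniformly in $n$. Exponentiating and using that the same bound applies to the reciprocal gives both inequalities of the lemma.

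The only real subtlety is justifying the equivariance $\zeta_i=(h^s_{p_i,q_i,i})_*(\xi_i)$ used above. This follows because the construction of $h_*$ in Lemma \ref{lem:5} depended only on the abstract data $(p',q',n,\xi')$ with $q'\in W^s_n(p',R_0)$, and because the cocycle relation $f^{(i)}\circ h^s_{p,q,0}=h^s_{p_i,q_i,i}\circ f^{(i)}$ yields the same identity upon projectivizing derivatives. No further estimate is needed.
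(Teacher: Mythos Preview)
Your argument is correct and is essentially identical to the paper's own proof: take logs, apply the $\beta$-H\"older bound with constant $C_3$ to each summand, invoke Lemma~\ref{lem:5} at level $i$ to get $d(\xi_i,\zeta_i)\le L_1 d(x_i,y_i)^{\bar\theta}$, use $d(x_i,y_i)\le e^{\kappa_0^{(i)}}d(x,y)$, and sum the resulting geometric series. One cosmetic point: your parenthetical ``$x_i\in W^c_i(p_i,1)$'' need not hold when the center expands, but this is immaterial since the estimate from Lemma~\ref{lem:5} applies with base points $p'=x_i$, $q'=y_i$ (for which $x_i\in W^c_i(x_i,1)$ trivially), exactly as the paper implicitly uses.
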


\begin{proof} From Lemma \ref{lem:5} we obtain 
\begin{align*}
\left| \log \dfrac{\prod_{i=0} ^{n-1} \|Df_{i}    \xi_{i}\|}{\prod_{i=0} ^{n-1} \|Df_{i}  \zeta_{i}\|}
\right|
&= \left |
	\sum _{i=0}^{n-1}  \log    \|Df_{i}    \xi_{i}\| - \log \|Df_{i}  \zeta_{i}\|  \| 
\right |\\
&\le  
	\sum _{i=0}^{n-1} C_3 d( \xi_{i},\zeta_{i})^\beta
\\
&\le  
	\sum _{i=0}^{n-1} C_3 \left[L_1 d(x_i, y_i)^{\bar \theta  } \right]^\beta
\\
&\le  
	\sum _{i=0}^{n-1} C_3  L_1^\beta  \left(e^{\kappa_0^{(i)} \bar \theta} \right) ^\beta d(x,y)^\beta \\
%&\le  
%	\sum _{i=0}^{n-1} C_3   \left(\delta^\theta C_2 e^{\theta (-\kappa  (n-i) + \gamma (n-i))} \right) ^\beta \\	
%	\\&\quad\quad {\red \sum _{j=1}^{n} C_3  \left( e^{3\epsilon (n-j)} \left((C_2 +L) e^{-\theta \hat \beta\beta\kappa n}\right)^\theta \right) ^\beta}\\
%	\\&= 	\sum _{j=0}^{n-1} C_3 \left(\delta^\theta e^{-\kappa  \theta \hat \beta n + 3(\gamma-\hat \gamma) j} \right) ^\beta \\
&\le \log(  K_2)  .		\qedhere
\end{align*}
\end{proof}

Approximate the derivatives $D h\colon TW^c_0(p) \to TW^c_0(q)$ by the bundle maps 
$$\Delta_{n} \colon  TW^c_0(p,1)\to TW^c_0(q)$$ defined as follows:  given $n\ge 0$ and $(x,v) \in    TW^c_0(p)$  let %with the notation above let
$$\Delta_{n} \colon (x,v)\mapsto Df^{(-n)}_{n}\left(\left(\| D f^{(n)}_0 (v)\|    (h^s_{p_{n},q_{n},n})_*((f^{(n)}_0)_*(x,\tfrac{v}{\|v\|})))\right)\right).$$
Note that $(\Delta_{n})_* = h_*$.   From Lemma \ref{lem:thisone}, with $h_*(x,v) = (y,w)$ we have    $$\|\Delta_{n} (x,v) \| = \frac{\|D_x f_0^{(n)} (v) \|}{\|D_y f_0^{(n)} (w) \|}\le K_2.$$ 
In particular, $\|\Delta_{n} \| $ is uniformly bounded.

Also,  given $\xi=(x,v)\in SW^c_0(p)$ we have % that  %with the notation above that 
$$\|Dh_n (\xi) \|= \|\Delta_n (\xi)\| \cdot \dfrac{\prod_{i=0} ^{n-1} \|D f_{i}     \zeta^n_{i}\|}{\prod_{i=0} ^{n-1} \|D f_{i} \hat   \zeta_{i}\|}
	 \cdot \| D\pi_{p_{n},q_{n},n}((f^{(n)}_0)_*(x,\tfrac{v}{\|v\|}))\|. $$
	It then follows from Lemma \ref{lem:bounded} that $$\|Dh_n (\xi)\|  - \|\Delta_n (\xi)\| \to 0$$ uniformly in $ \xi\in SW^c_0(p)$.
 Moreover, as the  the projectivization of $\Delta_n$ coincides with $(h^s_{p,q,0})_*$ we have
\begin{claim}\label{claim:salmi}
$\sup_{\xi\in SW^c_0(p,1) } \{\left\|Dh_n (\xi) - \Delta_n(\xi)\right\|\}\to 0$ as $n\to \infty$.  
\end{claim}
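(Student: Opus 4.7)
The plan is to assemble the claim from the polar decompositions of $Dh_n(\xi)$ and $\Delta_n(\xi)$, using Lemma \ref{lem:5} to control the projectivized factor and Lemma \ref{lem:bounded} together with property \ref{item2'} of the approximate holonomies to control the magnitude.

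First, I would write
\begin{equation*}
Dh_n(\xi) = \|Dh_n(\xi)\|\,(h_n)_*(\xi), \qquad \Delta_n(\xi) = \|\Delta_n(\xi)\|\,h_*(\xi),
\end{equation*}
since by construction these are the respective projectivizations, and then apply the triangle inequality to get
\begin{equation*}
\|Dh_n(\xi) - \Delta_n(\xi)\| \le \|Dh_n(\xi)\|\cdot d\bigl((h_n)_*(\xi), h_*(\xi)\bigr) + \bigl|\|Dh_n(\xi)\| - \|\Delta_n(\xi)\|\bigr|.
\end{equation*}
The uniform convergence $(h_n)_* \to h_*$ on $SW^c_0(p,1)$ is exactly Lemma \ref{lem:5}. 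What remains is to show that $\|Dh_n(\xi)\|$ is uniformly bounded in $n$ and $\xi$ and that $\bigl|\|Dh_n(\xi)\| - \|\Delta_n(\xi)\|\bigr| \to 0$ uniformly.

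Both statements come from the displayed product identity
\begin{equation*}
\|Dh_n(\xi)\| = \|\Delta_n(\xi)\|\cdot \frac{\prod_{i=0}^{n-1}\|Df_i\,\zeta_i\|}{\prod_{i=0}^{n-1}\|Df_i\,\hat\zeta^n_i\|}\cdot \|D\pi_{p_n,q_n,n}(\xi_n)\|.
\end{equation*}
By Lemma \ref{lem:bounded} the ratio of products tends to $1$ uniformly in $\xi$; by property \ref{item2'} together with $d(p_n, q_n) \le e^{\kappa_0^{(n)}}d(p,q)$, the factor $\|D\pi_{p_n,q_n,n}(\xi_n)\|$ also tends to $1$ uniformly. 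Combined with the uniform bound $\|\Delta_n(\xi)\| \le K_2$ from Lemma \ref{lem:thisone}, this gives $\|Dh_n(\xi)\|/\|\Delta_n(\xi)\| \to 1$ uniformly, which simultaneously bounds $\|Dh_n(\xi)\|$ uniformly and forces $\bigl|\|Dh_n(\xi)\| - \|\Delta_n(\xi)\|\bigr| \to 0$ uniformly. Substituting into the triangle inequality above closes the argument.

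The claim is essentially a bookkeeping step rather than a new estimate: the two genuine pieces of work — the Cauchy estimate for the projectivized derivative and the distortion estimate along orbits — have already been carried out in Lemmas \ref{lem:5} and \ref{lem:bounded}. The mildest obstacle is checking that every convergence is uniform in $\xi \in SW^c_0(p,1)$, which is inherited from the uniform-in-base-point statements of those earlier lemmas and of properties \ref{item1}--\ref{item2'} of the approximate holonomies.
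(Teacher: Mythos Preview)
Your proposal is correct and follows essentially the same approach as the paper: the paper records the product identity for $\|Dh_n(\xi)\|$, deduces $\|Dh_n(\xi)\|-\|\Delta_n(\xi)\|\to 0$ uniformly from Lemma~\ref{lem:bounded} (and implicitly property~\ref{item2'}), and notes $(\Delta_n)_*=h_*$, leaving the claim as an immediate consequence. You have simply spelled out the final triangle-inequality step, invoking Lemma~\ref{lem:5} for the projectivized convergence $(h_n)_*\to h_*$ and the bound $\|\Delta_n\|\le K_2$ to ensure uniformity---exactly the intended assembly.
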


It follows that the sequence  $D h_n $ converges uniformly if and only if the sequence $\Delta_n$ converges uniformly.  
%We show in Lemma \ref{lem:deltaconverge} below that the sequence of maps $\Delta_n$ converges uniformly and hence  $\|\Delta_n\|$ is uniformly bounded.

%Let 
%%$${\red L_2 = 1 + \sum _{n= 0 } ^{\infty} e^{4\epsilon n }e^{2\epsilon} C_1 (L e ^{-\kappa \theta \hat \beta \beta n})^\delta.}$$
%%$${\red L_2 = 1 + \sum _{n= 0 } ^{\infty}  K_2 e^{\gamma}    C_1 (L _1e^{-\theta\beta\kappa n}  )^\beta.}$$
%$$L_2 = 1 + \sum _{n= 0 } ^{\infty}  K_2 e^{\mu}    2C_1 L _1 e^{\bar \theta  \bar \kappa n}  .$$
%

\begin{lemma}\label{lem:deltaconverge}
The sequence of maps $\Delta_n\colon SW^c_0(p,1) \to TW^c_0(q)$ is uniformly Cauchy.  
%Moreover for each $(x, v) \in  S W^c _0(p,1)$ and $n\ge0$, $\|\Delta_n(x,v)\| \le L_2$.
%Moreover, letting $Dh= \lim_n D h_n$ we have $\|D_xh\| \le L_2 $ for all $x\in TW^c_0(p,1)$
\end{lemma}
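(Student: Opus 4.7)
The plan is to exploit the identity $(\Delta_n)_* = h_*$: since these projectivizations are independent of $n$, it suffices to show that the scalar functions $(x,v)\mapsto \|\Delta_n(x,v)\|$ are uniformly Cauchy on $SW^c_0(p,1)$.

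First I would record the equivariance $(f^{(n)}_0)_* \circ h_* = (h^s_{p_n,q_n,n})_* \circ (f^{(n)}_0)_*$, which follows by passing to the limit in the identity $f^{(n)}_0 \circ h_{n+k} = (h^s_{p_n,q_n,n})_k \circ f^{(n)}_0$ between the approximations (valid for all $k$ since $q_n \in W^s_n(p_n, R_0)$, as $d(p_n,q_n) \le e^{\kappa_0^{(n)}} d(p,q) \le R_0$). This equivariance yields both $(\Delta_n)_* = h_*$ and the key consequence used below: writing $\xi = (x,v) \in SW^c_0(p,1)$, $\zeta = h_*(\xi)$, and $\xi_i = (f^{(i)}_0)_*\xi$, $\zeta_i = (f^{(i)}_0)_*\zeta$, one has $\zeta_n = (h^s_{p_n,q_n,n})_*(\xi_n)$ for every $n$.

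Next I would unwind the definition of $\Delta_n$ into a multiplicative telescope,
\[
\|\Delta_n(x,v)\| = \|v\|\, \prod_{i=0}^{n-1} \frac{\|Df_i\, \xi_i\|}{\|Df_i\, \zeta_i\|},
\]
so that by the uniform $\beta$-H\"older continuity of $\log\|Df_n(\cdot)\|$ on $S\R^k$ (with constant $C_3$),
\[
\bigl|\log\|\Delta_{n+1}(x,v)\| - \log\|\Delta_n(x,v)\|\bigr| \le C_3\, d(\xi_n, \zeta_n)^\beta.
\]
Applying Lemma \ref{lem:5} at the shifted base points $(p_n,q_n)$ bounds $d(\xi_n,\zeta_n) \le L_1\, d(x_n, y_n)^{\bar\theta} \le L_1\, e^{\bar\theta \kappa_0^{(n)}}\, d(x,y)^{\bar\theta}$, yielding a uniform estimate of the form $C_3 L_1^\beta \rho_0^{\bar\theta\beta} e^{n\bar\theta\beta\bar\kappa}$, which is geometrically summable since $\bar\kappa < 0$.

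Consequently $\log\|\Delta_n\|$ is uniformly Cauchy on $SW^c_0(p,1)$; combined with the uniform two-sided bound from Lemma \ref{lem:thisone}, the norms $\|\Delta_n\|$ themselves are uniformly Cauchy, and together with the $n$-independence of the projectivizations $(\Delta_n)_* = h_*$ this gives uniform Cauchy convergence of $\Delta_n \colon SW^c_0(p,1) \to TW^c_0(q)$. The main subtlety I expect is verifying the equivariance identity rigorously before $h$ is known to be differentiable---one wants to avoid circularity---but this can be done entirely at the level of the approximating sequences as above, after which the telescoping estimate is essentially a second application of the argument already used to prove Lemma \ref{lem:thisone}.
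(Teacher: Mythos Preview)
Your proposal is correct and takes essentially the same approach as the paper: both use that $(\Delta_n)_* = h_*$ is $n$-independent (already noted before the lemma) and reduce the problem to a one-step estimate on the norms via the telescoping identity $\|\Delta_n(x,v)\| = \prod_{i=0}^{n-1} \|Df_i\,\xi_i\|/\|Df_i\,\zeta_i\|$, controlling the increment with the H\"older continuity of $Df$ and the bound $d(\xi_n,\zeta_n)\le L_1 e^{\bar\theta\kappa_0^{(n)}}$ from Lemma~\ref{lem:5}. The only cosmetic difference is that you work with $\log\|\Delta_n\|$ and the constant $C_3$, while the paper bounds $\|\Delta_{n+1}(x,v)-\Delta_n(x,v)\|$ directly by splitting $\bigl|\,\|D_{y_n}f_n\,w_n\|-\|D_{x_n}f_n\,v_n\|\,\bigr|$ into a base-point term and a direction term; your worry about circularity in the equivariance is well-placed and correctly resolved at the level of the approximating sequences.
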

\begin{proof}
Given $\xi=(x,v) \in SW^c_0(p,1) $ with  $\xi_{n}= (x_{n}, v_{n})\in S D_{n}$, let  $\zeta =(y,w) = h_*(\xi)\in SW^c_0(q) $ and  $\zeta_{n}=(y_{n}, w_{n})= (h^u_{p_{n},q_{n},{n}})_{*}(\xi_{n}) $.
Observe  that both $\Delta_{n} (x,v)$ and $ \Delta_{n+1} (x, v) $ have footprint   $y$.  % and $$\|\Delta_{n} (x,v) \| = \frac{\|D_x f_0^{(n)} (v) \|}{\|D_y f_0^{(n)} (w) \|}\le K_2.$$ 
Then 
\begin{align}
\| \Delta_{n} (x,v)& - \Delta_{n+1} (x, v) \| \notag\\
&= \frac{\| D_x f_0^{(n)} (v)\| }{ \| D_{y}f^{(n)}_0 ( {w})\| } \|D_{y_{n}} f_{n} (w_{n})\| \inv \\&\quad \quad  \cdot \left (   \|D_{y_{n}} (f_{n}) (w_{n})\| - \|D_{x_{n}} (f_{n}) (v_{n})\| 
\right )\notag \\
%&{\red \le e^{4\epsilon n }e^{2\epsilon}
% \max\{C_1 (L_1 e ^{-\kappa \theta \hat \beta \beta n})^\beta , C_1 L_1 e^{-\kappa\theta \hat \beta \beta n}\}
%\\
%&\le e^{4\epsilon n }e^{2\epsilon} C_1 (L _1e ^{-\kappa \theta \hat \beta \beta n})^\beta.}\\
& \le   K_2 C_1
 \left( C_1 L _1e^{\bar  \theta \kappa _0^{(n)}}   + C_1  e^{\kappa _0^{(n)} \beta }\right) \notag
\\
&\le   K_2 C_1  2  C_1 L _1  e^{ \bar \theta \kappa _0^{(n)}}  \label{eq:lllllll}\\
&\le   K_2 C_1  2  C_1 L _1  e^{\bar \theta \bar \kappa n}.\notag \qedhere
\end{align}
%It follows that the sequence $\Delta_{n}$ is uniformly Cauchy on  $SW^c_0(p,1) $.
%Moreover as $ \|\Delta_0 (x,v) \|=1$ for each $(x, v) \in  S W^c _0(q)$ we have 
%%\begin{align*}
%$$\| \Delta_{n}(x,v) \| \le 1  + \sum _{i=0}^n\| \Delta_{i} (x,v) - \Delta_{i+1} (x, v) \|
%\le L_2.  
%$$
%\end{align*}
\end{proof}
From Claim \ref{claim:salmi} and Lemma \ref{lem:deltaconverge} it follows that the  sequence of maps $D h_{ n} $ converges uniformly.  As $h_{ n}$ converges to $h$ we necessarily have that $h = h^s_{p,q,n}$ is differentiable and that $Dh_{ n}$ converges to $Dh$.  
Furthermore, %the $C^1$ norm of
 $\|Dh^s_{p,q,n}\|\le K_2$.  
     This completes the proof of  $C^1$ properties in Theorem \ref{thm:difflholonomiesspecial}.  
%
% necessarily converges uniformly to
% 
%  $Dh^s_{p,q,n}$. 

%\subsection{Claim}

\subsection{Step 4: H\"older continuity of $Dh$}
We have the following claim.
\begin{claim}\label{claim:kkklk}There is a $c_0>0$ so that 
if $d(x_k,y_k)\le 1$ for all $0\le k\le n-1$ and $d((x,v), (y,w))\le 1$ then 
%\begin{enumerate}
%\item  $d((f_0^{(n)})_*(x,w), (f_0^{(n)})_*(y,w))\le n C_1 ^{1+ (2+\beta)n} d( (x , y )^\beta$
%\item  $d((f_0^{(n)})_*(x,v), (f_0^{(n)})_*(y,w))\le (n+1) C_1 ^{1+ (2+\beta)n} d( (x,v),  (y,w))^\beta$
%\end{enumerate}
$$d((f_0^{(n)})_*(x,v), (f_0^{(n)})_*(y,w))\le e^{c_0 n}  d( (x,v),  (y,w))^\beta$$
\end{claim}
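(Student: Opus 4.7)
The plan is to avoid naively iterating the local Hölder estimate $\lochol((f_i)_*)\le C_1$, which would degrade the Hölder exponent from $\beta$ to $\beta^n$ under $n$-fold composition, and instead to separate the base-point distance from the fiber distance in the sphere bundle and iterate a pair of essentially linear recursions. The point is that only the $\beta$-Hölder variation of $Df$ (not its boundedness) is the source of non-Lipschitz behavior, and this can be isolated as a source term.

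First I would set $\xi_k=(x_k,v_k):=(f_0^{(k)})_*(x,v)$ and $\zeta_k=(y_k,w_k):=(f_0^{(k)})_*(y,w)$, and let $b_k:=d(x_k,y_k)$ and $c_k:=\|v_k-w_k\|$, so that $d(\xi_k,\zeta_k)=\max\{b_k,c_k\}$. The uniform bound $\|Df_k\|\le C_1$ immediately yields $b_{k+1}\le C_1 b_k$, hence $b_k\le C_1^k b_0$. For the fiber evolution the triangle inequality gives
\[
\|v_{k+1}-w_{k+1}\|\le \bigl\|(Df_k|_{x_k})_*v_k-(Df_k|_{x_k})_*w_k\bigr\|+\bigl\|(Df_k|_{x_k})_*w_k-(Df_k|_{y_k})_*w_k\bigr\|.
\]
The first summand is at most $C_1^2 c_k$ because the linear isomorphism $Df_k|_{x_k}$ satisfies $1/C_1\le m(Df_k|_{x_k})\le\|Df_k|_{x_k}\|\le C_1$, so its projectivization is bi-Lipschitz with constant $C_1^2$ by the formula from the setup. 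The second summand is at most $C' b_k^\beta$ for a constant $C'=C'(C_0,C_1)$, using $\|Df_k|_{x_k}-Df_k|_{y_k}\|\le C_0 b_k^\beta$, the lower bound $m(Df_k)\ge 1/C_1$, and the elementary inequality $\|u/\|u\|-v/\|v\|\|\le 2\|u-v\|/\min(\|u\|,\|v\|)$. This gives
\[
c_{k+1}\le C_1^2 c_k+C' b_k^\beta\le C_1^2 c_k + C' C_1^{k\beta} b_0^\beta.
\]

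Unrolling this yields $c_n\le C_1^{2n}c_0 + C' b_0^\beta C_1^{2(n-1)}\sum_{k=0}^{n-1}C_1^{k(\beta-2)}$; since $\beta<2$ the geometric sum converges to an absolute constant and one obtains $c_n\le C_1^{2n}(c_0+C'' b_0^\beta)$ for some $C''=C''(C_0,C_1)$. The hypothesis $d((x,v),(y,w))\le 1$ forces $b_0,c_0\le a_0:=d(\xi_0,\zeta_0)\le 1$, so $c_0\le a_0^\beta$ and $b_0^\beta\le a_0^\beta$; combining with $b_n\le C_1^n b_0\le C_1^n a_0^\beta$ gives $\max\{b_n,c_n\}\le C_1^{2n}(1+C'')a_0^\beta$, which is the claim with $c_0:=2\log C_1+\log(1+C'')$.

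The main obstacle is conceptual rather than computational: one has to recognize that iterating the local Hölder estimate directly is fatal to the exponent, and that the correct decomposition places the non-Lipschitz term $C_0 b_k^\beta$ only as a driving term in an otherwise linear recursion for $c_k$, with $b_k$ growing at most geometrically with rate $C_1$. Once that decomposition is in place the rest is a geometric-series computation. The hypothesis $d(x_k,y_k)\le 1$ for $0\le k\le n-1$ plays only a mild role, keeping us in the regime where the uniform bounds $C_0,C_1$ apply.
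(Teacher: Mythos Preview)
Your proof is correct and follows essentially the same approach as the paper's: separate the base-point distance from the fiber distance, bound the fiber evolution by a Lipschitz constant at fixed base plus a $\beta$-H\"older source term in the base, and use that the base distance grows only geometrically. The paper organizes the computation slightly differently---it splits globally via the intermediate point $(x,w)$ and then runs an induction on the second piece, obtaining $nC_1^{(1+\beta)n}d(x,y)^\beta$ and hence $c_0=(1+\beta)\log C_1+1$---whereas you set up and solve an explicit step-by-step recursion for $(b_k,c_k)$; the resulting constants differ cosmetically but the content is the same.
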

\begin{proof}
We have 
\begin{align*} d((f_0^{(n)})_*(x,v), & (f_0^{(n)})_*(y,w)) \le 
d((f_0^{(n)})_*(x,v), (f_0^{(n)})_*(x,w))+ d((f_0^{(n)})_*(x,w), (f_0^{(n)})_*(y,w))\\
&\le (C_1)^{2n} d(v,w) +  d((f_0^{(n)})_*(x,w), (f_0^{(n)})_*(y,w))\end{align*}

Also proceeding inductively we have 
\begin{align*} 
d((f_0^{(n)})_*(x,w), & (f_0^{(n)})_*(y,w)) 
= 
d((D_{x_{n-1}}f_{n-1}) _*  (D_{x}f_0^{(n-1)})_* (w) , (D_{y_{n-1}}f_{n-1}) _*  (D_{y}f_0^{(n-1)})_* (w) ) 
\\ &\le 
d((D_{x_{n-1}}f_{n-1}) _*  (D_{x}f_0^{(n-1)})_* (w) , (D_{y_{n-1}}f_{n-1}) _*  (D_{x}f_0^{(n-1)})_* (w) ) \\
&\quad \quad + 
d((D_{y_{n-1}}f_{n-1}) _*  (D_{x}f_0^{(n-1)})_* (w) , (D_{y_{n-1}}f_{n-1}) _*  (D_{y}f_0^{(n-1)})_* (w) ) 
\\ & \le 
C_1 d(x_{n-1}, y_{n-1}) ^\beta+  C_1  (n-1) C_1 ^{  (1+\beta)(n-1)} d(  x,y)^\beta
\\ &\le 
C_1 (C_1)^{n\beta} d(x, y) ^\beta+  C_1  (n-1) C_1 ^{ (1+\beta)(n-1)} d(  x,y)^\beta
\\&  \le 
n C_1 ^{  (1+\beta)n} d(  x,y)^\beta.
\end{align*}
Take $c_0 = (\log C_1 )(1+\beta) +   1$.  
\end{proof}

We now show that the maps $h_*\colon SW^c_0(p,1)\to SW^c_0(q)$ and $Dh\colon SW^c_0(p,1)\to TW^c_0(q)$ are H\"older continuous.  

Given $\epsilon>0$ take
\begin{itemize}
\item  $a_0= \max \left\{ %\sup\{\frac {-c_0}{\kappa_n}\} + \frac 1 \beta +1, 
 \sup\{ \frac{\kappa_n - \hat \gamma_n }{\kappa_n}\},
% ,
  \sup\{ \frac{\bar \theta \kappa_n - \beta \hat \gamma_n -\epsilon }{\beta  \kappa_n}\}  
  ,\sup  \{\frac { \kappa_n \bar \theta- c_0 -\epsilon}{ \bar  \theta  \kappa_n  \beta   }\}  \right \} >1$;
%\item 
%$a_0:= \sup_{n\in \Z}\left\{\frac{-\kappa_n}{-\kappa_n +\hat \kappa_n} \right\}$. 
%\item Fix $\bar \beta =   \theta \hat \beta - (1+\alpha) (\gamma - \hat \gamma) \kappa\inv > 0$. %any $\bar \beta<\hat  \beta <\beta$
\item Set  $\bar \alpha = \min \left\{ %\inf_n\{ \frac{\xi}{\kappa_n a_0} \}, 
 \inf \{ \frac{\kappa_n -  \gamma_n }{a_0 \kappa_n}\},  \inf \{ \frac{  \theta   \kappa _n + (1+\alpha)(\hat  \gamma _n - \gamma_n)}{\kappa_n a_0    } \} , \frac {\bar \theta} {a_0 } %\inf \{ \frac {1}{c_0 + \kappa_n (\theta \beta -\theta)}
  \right \}.$  
  %{ \bar\beta}a_0\inv,  {\red \frac{\theta \beta \kappa - (1+\alpha)(\gamma -\hat \gamma)}{\kappa a_0}  }, {(\kappa - \gamma)}{\kappa\inv}  % {(1-\hat \beta) }{a_0\inv }
%\right \}$.  
Note $0<\bar\alpha <1$.  
%\item 
%Set $n_1>0$ so that for all $n\ge n_1$ we have $$e^{(-\kappa  \theta \hat \beta  + (1+\alpha)(\gamma-\hat \gamma))n} \le e^{(-\kappa  \theta \bar  \beta)n} .$$
\item Recall our fixed $\rho_0$  with $\rho(p,q,0)\le \rho_0 < \delta$ satisfying \eqref{llklklklkl} and let $\rho = \rho_0/(1+ 2 C_2)^{\theta\inv}.$

%We also write $\rho= \rho(p,q,0)$ for the remainder.
\end{itemize}
Let $q\in W^s_0(p)$ and as before let $h = h^s_{p,q,0}$.  Assume now that $d(p,q) \le \rho$.  
\begin{claim}
The function $h_*$ is $\bar \alpha$-H\"older with constant uniform in all parameters.  
\end{claim}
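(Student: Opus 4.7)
The plan is to run a two-scale argument in the style of the preceding steps. Fix $\xi=(x,v)$ and $\xi'=(x',v')$ in $SW^c_0(p,1)$, set $r=d(\xi,\xi')$, and write $\zeta=h_*(\xi)$, $\zeta'=h_*(\xi')$. I will choose an intermediate iterate $n=n(r)$ of size comparable to $|\log r|/(a_0|\bar\kappa|)$, selected as the largest integer for which the forward-iterate footpoints $x_k,x'_k$ remain within $\delta$ for all $0\le k\le n$; the first defining clause of $a_0$ is tailored to make this possible. I will then split
\begin{equation*}
d(\zeta,\zeta')\le d(\zeta,(h_n)_*(\xi))+d((h_n)_*(\xi),(h_n)_*(\xi'))+d((h_n)_*(\xi'),\zeta'),
\end{equation*}
and bound each of the three pieces by a constant multiple of $r^{\bar\alpha}$.

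The two outer tail terms are the easy part: summing the per-step estimate \eqref{eqMMM} from the proof of Lemma~\ref{lem:5} and using $d(x,h(x))\le\rho_0\le 1$ yields
\begin{equation*}
d(\zeta,(h_n)_*(\xi))\le\sum_{k\ge n}3C_2C_1e^{\omega k}\rho_0^{\bar\theta}\le C'e^{\omega n}.
\end{equation*}
With our choice of $n$ and the second defining clause of $\bar\alpha$ (the one built from $\omega$ and $\kappa_n$) this is bounded by a multiple of $r^{\bar\alpha}$; the symmetric bound handles $d((h_n)_*(\xi'),\zeta')$.

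The middle term is the crux. I will decompose $(h_n)_*=(f_n^{(-n)})_*\circ(\pi_n)_*\circ(f_0^{(n)})_*$. Applying Claim~\ref{claim:kkklk} to the forward cocycle $\{f_k\}$ gives $d((f_0^{(n)})_*(\xi),(f_0^{(n)})_*(\xi'))\le e^{c_0n}r^\beta$. Uniform $C^{1+\beta'}$-regularity of $\pi_n$ supplies a Lipschitz constant that preserves this bound up to a universal factor. Finally, the analog of Claim~\ref{claim:kkklk} for the inverse cocycle $\{f_k^{-1}\}$ contributes another factor $e^{c_0n}$ and composes by raising the bound to the $\beta$-th power, so that
\begin{equation*}
d((h_n)_*(\xi),(h_n)_*(\xi'))\le C\,e^{c_0(1+\beta)n}r^{\beta^2}.
\end{equation*}
With $n$ as above this becomes $r^{\beta^2-c_0(1+\beta)/(a_0|\bar\kappa|)}$, and the third defining clause of $a_0$ is designed precisely so that this exponent is at least the third clause of $\bar\alpha$, namely $\bar\theta/a_0$.

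I expect the backward pullback step inside the middle term to be the main obstacle. A naive Lipschitz estimate for $(f_n^{(-n)})_*$ accumulates to $C_1^{n}$ or worse, which is catastrophic at our optimal $n$; the analog of Claim~\ref{claim:kkklk} for the inverse cocycle replaces this with the subexponential factor $e^{c_0n}$, but only after verifying that the backward footpoint iterates of the points $\pi_n(x_n),\pi_n(x'_n)\in W^c_n(q_n)$ stay within unit distance through all $n$ pullback steps. Verifying this stay condition requires the center-manifold distortion bounds of Proposition~\ref{prop:HPS} together with the sizing of $a_0$, which was designed to dominate each ratio $\hat\gamma_n/|\kappa_n|$ via its first defining clause. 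Once this is in place the remaining bookkeeping is routine, and the uniformity of the resulting constant across all admissible $p$, $q$, $\xi$, $\xi'$ follows directly from the uniformity of the constants appearing in Proposition~\ref{prop:HPS}, Claim~\ref{claim:kkklk}, and the approximate holonomies $\{\pi_n\}$.
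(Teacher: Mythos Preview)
Your decomposition into two tail terms plus a middle term, and your treatment of the tails via the summation of \eqref{eqMMM}, match the paper exactly. The gap is in the middle term.

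You propose to pull $(\pi_n)_*(\xi_n)$ and $(\pi_n)_*(\xi'_n)$ back from time $n$ to time $0$ using an analog of Claim~\ref{claim:kkklk} for the inverse cocycle, picking up a factor $e^{c_0n}$ where $c_0=(\log C_1)(1+\beta)+1$ involves the \emph{global} Lipschitz constant $C_1$ of $Df^{\pm 1}$. Your resulting bound $e^{c_0(1+\beta)n}r^{\beta^2}$, with $n\approx|\log r|/(a_0|\bar\kappa|)$, yields exponent $\beta^2-c_0(1+\beta)/(a_0|\bar\kappa|)$. For this to exceed $\bar\theta/a_0$ you need $a_0\ge \bar\theta/\beta^2+c_0(1+\beta)/(\beta^2|\bar\kappa|)$. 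But the third clause of $a_0$ only guarantees $a_0\ge 1/\beta+(c_0+\epsilon)/(\bar\theta|\bar\kappa|\beta)$, and when $\bar\theta>\beta/(1+\beta)$ and $C_1$ (hence $c_0$) is large, the former strictly exceeds the latter. So your assertion that ``the third defining clause of $a_0$ is designed precisely so that this exponent is at least $\bar\theta/a_0$'' is false in general.

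The paper avoids this by observing that after applying $(\pi_n)_*$ the two points $\zeta^n_n=(\pi_n)_*(\xi_n)$ and $\zeta'^n_n=(\pi_n)_*(\xi'_n)$ both lie on $SW^c_n(q_n)$, and then invoking Lemma~\ref{lem:1,1} (not a crude Claim~\ref{claim:kkklk} analog) for the backward pullback. Lemma~\ref{lem:1,1} uses the sharp center rates $\gamma_k,\hat\gamma_k$ rather than the global $c_0$, and this is exactly what the constants are calibrated for: the third clause of $a_0$ is used \emph{only} in the forward step, to guarantee $d(\xi_n,\xi'_n)\le\rho^{\bar\theta\beta}e^{\kappa_0^{(n)}\theta}$, which (together with the footpoint bound coming from the first clause of $a_0$) places you inside the hypotheses of Lemma~\ref{lem:1,1}. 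The first two clauses of $\bar\alpha$ are then read off directly from the output of that lemma.
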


\begin{proof}
%It is enough to show the map $Dh^u_{p,q,0}\colon W^c_0(p,1)\to W^c_0(q)$ is H\"older continuous with exponent $a:= ...$

Given $n\in \N$ let 
$$r_n : = \rho e^{\kappa_0^{(n)}  a_0   } .$$%=  \rho e^{(\hat \kappa-\kappa )  (\beta- \hat \beta) n } e^{(\hat \kappa-\kappa ) \hat \beta n}.$$ Note $r_n\to 0$ as $n\to \infty$
Consider any pair $\xi := (x,v)$ and $ \xi':= (x',v') $  in $SW^c_0(p, 1)$.  
It is enough to consider pairs that are sufficiently close so that   for some  $1\le n$:  %\le n_1$  that 
%\begin{itemize}
%	\item 
	$$d(x,x') \le  r_n ,   \DPP (v, v') \le (r_n)^{\bar \theta} ,   
\text{ and either $d(x,x')\ge r_{n+1}$ or $\DPP (v, v') \ge r_{n+1}^{\bar \theta}$}.$$
%\end{itemize}
%where  $r_n : = e^{-\theta\inv (1+\alpha)(\gamma - \hat \gamma)n}\rho$.    
%where $A = \rho^\theta e^{-\kappa_{-n}^{(n)}  \theta \hat \beta +    (1+\alpha) (\gamma_{-n}^{(n)}-\hat \gamma_{-n}^{(n)})}$.
Let $\zeta= (y, w) = h_*(\xi)$ and $\zeta^n= (y^n, w^n) = (h_n)_*(\xi)$. Similarly define $\zeta'$ and $\zeta'^{ n}$.  
From \eqref{eqMMM} we have  %(for $n \ge n_1$)
%\begin{itemize}
%\item 
 $$d(\zeta, \zeta^n) \le e^{(\kappa_0^{(n)}  \theta   + (1+\alpha)(\hat \gamma_0^{(n)}- \gamma_0^{(n)})} L_1
%\le  e^{\xi n } L_1
 \le L_1 \rho^{-  \bar \alpha }  e^{\mu\bar \alpha a_0} r_{n+1}^{ \bar \alpha }$$  and % \le L_1 \rho^{-  \bar \beta   a_0\inv} (r_{n}^\theta)^{ {  \bar \beta a_0\inv }}  $
%\item 
$$d(\zeta', \zeta'^n)\le e^{(\kappa_0^{(n)}  \theta   + (1+\alpha)(\hat \gamma_0^{(n)}- \gamma_0^{(n)})} L_1
%\le   e^{\xi n } L_1
 \le L_1 \rho^{-  \bar \alpha }  e^{\mu\bar \alpha a_0} r_{n+1}^{ \bar \alpha }.$$
% \le e^{(-\kappa  \theta \hat \beta  + (1+\alpha)(\gamma-\hat \gamma))n} L_1
%= e^{-\kappa   \bar \beta n } L_1 \le L_1 \rho^{-  \bar \beta   a_0\inv}  r_{n}^{ { \bar \beta a_0\inv }}  $%\le L_1 \rho^{-  \bar \beta   a_0\inv} (r_{n}^\theta)^{ {  \bar \beta a_0\inv }}  $
%= e^{-\kappa   \bar \beta n } L_1 \le L_1 r_{n}^{\theta{\bar \beta a_0\inv}} $
%\end{itemize}

Note that for all $0\le k\le n$ we have $$d(x_{k}, x'_{k} ) \le \rho e^{a_0\kappa _0^{(n)} + \hat \gamma _0^{(k)}} \le \rho e^{\kappa_0^{( k)}  } \le \delta. $$ %\quad 
%{\red  \max\left\{\rho e^{-\kappa n +\hat \gamma(n-k)}, \rho e^{-\kappa n  }\right\}\le \rho}$.  
Also note that, as $\beta\kappa_n < \kappa_n \theta + \beta \hat \gamma_n$ for all $n$, we have $a_0\ge \frac{\theta}{\beta}\ge \frac{\theta}{\beta\bar \theta}$.
From  Claim \ref{claim:kkklk}, for all $0\le k\le n$  we have 
$$(\xi_{k}, \zeta_{k})\le e^{c_0 k} (r_n)^{\bar \theta\beta} = e^{c_0 k}  \rho ^{\bar \theta\beta}  e^{  \kappa_0^{(n)}  a_0 \bar \theta  \beta}\le  \rho ^{\bar\theta \beta } e^{\kappa_0^{(n)}    \theta  }
.$$

%Lemma \ref{lem:1,1}\ref{lemmapartbbb} we have that 
%\begin{itemize}
%	\item $d(x_{-n},x'_{-n}) \le 
%	\rho e^{-\kappa a_0 n} e^{-\hat \kappa n} e^{\hat \kappa   n - \hat \gamma n} 
%\le \rho e^{-  \kappa n}  
%$
%	\item $\DPP (v_{-n}, v'_{-n}) \le 
%	\rho ^\theta  e^{-\theta \kappa a_0 n} e^{-\theta \hat \kappa \hat \beta n} e^{\theta \hat \kappa  \hat  \beta n + (1+\alpha)(\gamma -  \hat \gamma ) n} 
%\le e^{-  \kappa\theta      n} \rho^\theta  $

%	  \delta^\theta  e^{\theta \hat \kappa  (\beta- \hat \beta) n } e^{- \theta   \kappa  \beta n }    
%	   e^{ \hat \kappa \theta \hat \beta n + (\gamma -\hat \gamma )n}$
%	\end{itemize}

From the properties of the maps $\pi_n$ we have 
%\begin{itemize}
%	\item 
	$d(x_{n},y^n_{n}) \le C_2 e^{\kappa_0^{(n)}} \rho $,
%	\item 
	$d(x'_{ n},y'^n_{n}) \le C_2 e^{ \kappa _0^{(n)} } \rho $,
%	\item 
	$\DPP (v_{ n}, w^n_ {n}) \le C_2 e^{\kappa _0^{(n)} \bar\theta    } \rho^{\bar\theta }   $, and 
%	\item
	 $\DPP (v'_{n}, w'^n_{n}) \le C_2 e^{\kappa_0^{(n)} \bar\theta    } \rho^{\bar\theta } .$
%\end{itemize} 
Thus 
% \begin{itemize}
%	\item
	 $$d(y_{n}^n,y'^n_{n}) \le (1+2C_2)  e^{\kappa _0^{(n)} }\rho \le   (1+2C_2) ^{\bar \theta\inv} e^{\kappa _0^{(n)} }\rho$$
%	\item \begin{align*}
%	\item 
and 
	$$ \DPP (w_{ n}^n, w'^n_{ n}) \le  (1+2C_2) e^{\kappa _0^{(n)}  \theta    } \rho^{\bar \theta \beta }.$$
From Lemma \ref{lem:1,1} we have  
	$$d(y^n,y'^n) \le (1+2C_2)  e^{\kappa_0^{(n)}  - \gamma_0^{(n)}}\rho \le (1+2C_2)  e^{\mu\bar \alpha a_0} \rho^{1-\bar \alpha} (r_{n+1})^{\bar \alpha  }$$
		and 
(from Lemma \ref{lem:1,1} %\ref{lemmapartaaa}
 with $r= (1+2C_2) ^{\bar \theta\inv}   \rho<\delta$)
% \begin{itemize}
%
%	\item 
	\begin{align*} 
	\DPP(w^n,w'^n) &\le 
 (1+2C_2)    \rho^{\bar \theta }  e^{\kappa_0^{(n)} \theta   + (1+\alpha) (\hat \gamma _0^{(n)} -  \gamma_0^{(n)})  } 
%\\&  = 
% (1+2C_2)    \rho^{\theta\beta} e^{\kappa_0^{(n)} \theta      + (1+\alpha) (\hat \gamma _0^{(n)} -  \gamma_0^{(n)})  } 
 %  \le (1+2C_2) (r_n)^{\frac{(1-\hat \beta)\theta}{a}}$	
%\\&   \le  (1+2C_2) \rho^{\theta\beta} e^{\bar \alpha   a_0 \kappa_0^{(n)}}   
\le (1+2C_2) e^{\mu\theta a_0 \bar \alpha}  ( r_{n+1} )^{\bar \alpha}.\end{align*}  
%  
%  \rho^\theta e^{\theta a_0 \kappa_0^{(n)}}
%  
%  
%  (r_n)^{\frac{\theta \kappa - (1+\alpha)(\gamma -\hat \gamma)}{\kappa a_0}}$	
%	\item \begin{align*}
%	d(w,w')& \le \left(([A]^\theta + 2C_2) e^{-\kappa_{-n}^{(n)}  \theta \hat \beta +   (1+\alpha) (\gamma_{-n}^{(n)}-\hat \gamma_{-n}^{(n)})} \right) ^\theta e^{\kappa_0^{(-n)}  \theta \hat \beta  - (1+\alpha) (\gamma_0^{(-n)}-\hat \gamma_0^{(-n)})} \\
%%	&\le \left(1+ 2C_2\right ) ^\theta  e^{-\kappa_{-n}^{(n)}  \theta^2 \hat \beta +   \theta(1+\alpha) (\gamma_{-n}^{(n)}-\hat \gamma_{-n}^{(n)})}   \\
%	&\le \left(1+ 2C_2\right ) ^\theta \left( e^{-\kappa_{-n}^{(n)}  \theta \hat \beta +    (1+\alpha) (\gamma_{-n}^{(n)}-\hat \gamma_{-n}^{(n)})}  \right) ^\theta \\
%&\le \left(1+ 2C_2\right ) ^\theta [A] ^\theta
%	\end{align*}
%	\end{itemize}
%with $A = e^{-\kappa_{-n}^{(n)}  \theta \hat \beta +    (1+\alpha) (\gamma_{-n}^{(n)}-\hat \gamma_{-n}^{(n)})}$.
It follows that there is some uniform $K_3>0$ so that  
$$d(\zeta, \zeta') \le K_3 r_{n+1}^{\bar \alpha }\le   K_3 (r_{n+1}^{\bar \theta})^{ \bar \alpha }$$ whence 
$d(\zeta, \zeta') \le K_3 d(\xi, \xi')^{\bar \alpha}$.
\end{proof}

\begin{claim}
The function $(x,v)\mapsto \log\|D_xh (v)\| $ is $ (\bar \theta \bar \alpha) $-H\"older on $SW^c_0(p,1)$.  
\end{claim}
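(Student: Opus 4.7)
The plan is to represent $\log\|D_xh(v)\|$ as an infinite telescoping series and then bound differences of such series via a short-time/long-time decomposition, completely parallel to the proof of the preceding claim for $h_*$.

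First, I would establish the representation: for $\xi=(x,v)\in SW^c_0(p,1)$, setting $\zeta=(y,w)=h_*(\xi)$, $\xi_i=(f_0^{(i)})_*(\xi)$, and $\zeta_i=(f_0^{(i)})_*(\zeta)$,
\begin{equation*}
\log\|D_xh(v)\|=\sum_{i=0}^{\infty}\bigl(\log\|Df_i(\xi_i)\|-\log\|Df_i(\zeta_i)\|\bigr).
\end{equation*}
This follows by passing to the limit in the identity $\|\Delta_n(\xi)\|=\|D_xf_0^{(n)}(v)\|/\|D_yf_0^{(n)}(w^n)\|$ (where $w^n$ is the direction of $(h_n)_*(\xi)$), using Lemma \ref{lem:bounded} to replace the approximate direction $w^n$ by the true direction $w$ in the limit. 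Absolute convergence of the series follows from Lemma \ref{lem:5}, which gives $d(\xi_i,\zeta_i)\le L_1 e^{\bar\theta\kappa_0^{(i)}}d(x,y)^{\bar\theta}$, together with the fact that $\zeta\mapsto\log\|Df_i(\zeta)\|$ is $\beta$-H\"older on $S\R^k$ with uniform constant $C_3$.

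For the H\"older estimate, given $\xi,\xi'\in SW^c_0(p,1)$ with $t:=d(\xi,\xi')$ small, the goal is to bound
\begin{equation*}
\log\|Dh(\xi)\|-\log\|Dh(\xi')\|=\sum_{i=0}^{\infty}\bigl[\bigl(\log\|Df_i\xi_i\|-\log\|Df_i\zeta_i\|\bigr)-\bigl(\log\|Df_i\xi'_i\|-\log\|Df_i\zeta'_i\|\bigr)\bigr].
\end{equation*}
I would estimate each summand in two complementary ways. The \emph{short-time} bound, useful while forward iterates remain close, groups terms as $C_3 d(\xi_i,\xi'_i)^\beta+C_3 d(\zeta_i,\zeta'_i)^\beta$; Claim \ref{claim:kkklk} gives $d(\xi_i,\xi'_i)\le e^{c_0 i}t^\beta$, and combining the $\bar\alpha$-H\"older continuity of $h_*$ just established with another application of Claim \ref{claim:kkklk} on $W^c_0(q)$ yields $d(\zeta_i,\zeta'_i)\le e^{c_0 i}(K_3 t^{\bar\alpha})^\beta$. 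The \emph{long-time} bound, useful when forward distances are large, groups the same terms as $2C_3 d(\xi_i,\zeta_i)^\beta+2C_3 d(\xi'_i,\zeta'_i)^\beta$, which Lemma \ref{lem:5} bounds by $4C_3 L_1^\beta e^{\bar\theta\beta\kappa_0^{(i)}}$.

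I would then split the sum at a threshold $N$, using the short-time bound for $i<N$ and the long-time bound for $i\ge N$. The short-time contribution is $O(e^{c_0\beta N}t^{\bar\alpha\beta^2})$ and the long-time contribution is $O(e^{\bar\theta\beta\bar\kappa N})$. Choosing $N\sim|\log t|$ to balance these yields a H\"older estimate of the desired form $O(t^{\bar\theta\bar\alpha})$. The third clause in the definition of $a_0$, namely $\sup\{(\kappa_n\bar\theta-c_0-\epsilon)/(\bar\theta\kappa_n\beta)\}$, was engineered precisely for this balancing so that with $\bar\alpha\le\bar\theta/a_0$ the resulting exponent is at least $\bar\theta\bar\alpha$. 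The main obstacle is the exponent bookkeeping---verifying that this balancing among the sphere-bundle expansion rate $c_0$, the contraction rate $\bar\kappa$, the regularity $\bar\theta$, and the constants $a_0$, $\bar\alpha$ does produce H\"older exponent at least $\bar\theta\bar\alpha$. Once the claim is proved, it combines with the $\bar\alpha$-H\"older continuity of $h_*$ to show that $Dh\colon SW^c_0(p,1)\to TW^c_0(q)$ is H\"older continuous, completing the proof of Theorem \ref{thm:difflholonomiesspecial} with any $\hat\alpha\le\bar\theta\bar\alpha$.
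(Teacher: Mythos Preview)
Your plan is essentially the paper's proof: represent $\log\|Dh(\xi)\|$ via the partial sums $\log\|\Delta_n(\xi)\|=\sum_{j<n}(\log\|Df_j\xi_j\|-\log\|Df_j\zeta_j\|)$, bound the tail using \eqref{eq:lllllll}, bound the finite difference $|\log\|\Delta_n(\xi)\|-\log\|\Delta_n(\xi')\||$ using the $\bar\alpha$-H\"older continuity of $h_*$ together with Claim \ref{claim:kkklk}, and take the cutoff $N=n$ from the same annular decomposition $r_n=\rho e^{a_0\kappa_0^{(n)}}$ used in the previous claim (your ``balancing'' is exactly this choice).

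Two refinements the paper makes that matter for the exponent bookkeeping you flag as the main obstacle.  First, in the short-time part the paper separates base and direction, exploiting that $v\mapsto\log\|D_xf_j v\|$ is \emph{Lipschitz} in the direction variable (bound $C_1^2\,d(w_j,w'_j)$) and only $\beta$-H\"older in the base; your joint bound $C_3\,d(\zeta_j,\zeta'_j)^\beta$ introduces an extra factor of $\beta$ in the exponent.  Second, the paper applies the $h_*$ H\"older bound at each level $j$ (using uniformity of $K_3$ across $p,q,n$), obtaining $d(w_j,w'_j)\le K_3\,d(\xi_j,\xi'_j)^{\bar\alpha}$ with growth $e^{\bar\alpha c_0 j}$, whereas applying $h_*$ at level $0$ and then pushing forward via Claim \ref{claim:kkklk} gives the larger growth $e^{c_0 j}$.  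With your coarser estimates the balancing yields a smaller H\"older exponent than $\bar\theta\bar\alpha$; since Theorem \ref{thm:difflholonomiesspecial} only requires some $\hat\alpha>0$ this would still suffice for the theorem, but to match the claim as stated you should incorporate these two sharpenings.
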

\begin{proof}
We retain all notation from the previous proof.  In particular,  $d(x,x') \le  r_n $; $\DPP (v, v') \le (r_n)^{\bar \theta}  $;
	 and either $d(x,x')\ge r_{n+1}$ or $\DPP (v, v') \ge r_{n+1}^{\bar \theta}$.

Recall that for all $(x,v)\in SW^c_0(p)$ we have $\|\Delta_n(x,v)\| - \|D_x h (v) \| \to 0$ as $n\to \infty$.  Moreover,
from \eqref{eq:lllllll}  we have that for some uniform $ K_4$ that 
\begin{equation}
\label{eq:jjul}
\left|\|\Delta_n(x,v)\| - \|D_x h (v) \| \right|   \le  K_4 e^{\bar \theta \kappa_0^{( n)}}     \le K_5 (r_{n+1}) ^{   \bar \alpha} \end{equation}

We have that
\begin{align*}
\log(\|\Delta_n(x,v) \| )
		&=   \sum_{j= 0 } ^{n-1}  \log \|D_{x_{j}}f_{j}  v_{j}  \| - \log \|D_{y_{j}}f_{j} w_{j} \| .
\end{align*}
Let $L_2\ge \|Dh\|$.  Then for all $0\le j\le n$ we have\begin{itemize}

\item $d(x_{j}, x'_{j})\le e^{\hat \gamma_0^{(k)}}d(x,x')$	whence $d(y_j, y'_j)\le L_2e^{\hat \gamma_0^{(k)}}d(x,x')$

\item  $d(v_{j}, v'_{j}) \le  e^{c_0 j} d ((x,v),(x',v')) ^\beta$
\item $d(w_{j}, w'_{j}) \le K_3 d(v_{j}, v'_{j})^{\bar \alpha} \le K_3 e^{\bar \alpha c_0 j} d ((x,v),(x',v')) ^{\bar \alpha\beta}.$

\end{itemize}

%Recall that for all $0\le j\le n$ we have\begin{itemize}
%%	\item $d(x_{n},x'_{n}) \le \rho e^{ \kappa _0^{(n)}}  $
%%	\item $\DPP (v_{n}, v'_{n})  
%%\le e^{  \kappa _0^{(n)}\theta  } \rho^\theta  $
%	\item $d(x_{j}, x'_{j})\le  \rho  e^{  \kappa _0^{(n)}}$	whence $d(y_{j}, y'_{j})\le  L_2 \rho e^{  \kappa _0^{(n)}}$
%
%\item  $d(v_{j}, v'_{j}) \le  \rho^\theta e^{\kappa _0^{(n)} \theta } $ 
%\item $d(w_{j}, w'_{j}) \le K_3 d(v_{j}, v'_{j})^{\bar \alpha} \le K_3 \rho^{\bar \alpha \theta} e^{\kappa _0^{(n)} \bar \alpha \theta } $ 
% 
%
%%	\item $\DPP (v_{-n}, w_{-n})   \le C_2 \rho^\theta e^{-  \kappa \theta n}$ and $\DPP (v'_{-n}, w'_{-n})   \le C_2 \rho^\theta e^{-  \kappa\theta  n}$
%\end{itemize}
Then for some uniform choice of $K_6$, $K_7$  	and $K_8$ we have 
	\begin{align*}
 |\log \| & \Delta_n(x,v)\| -  \log \|\Delta_n(x',v')\|  | \\
		&\le \sum_{j= 0 } ^{n-1}  \left| \log\| D_{x_{j}}f_{j}  v_{j}\| - \log \|D_{y_{j}}f_{j} w_{j} \| - 
		\log \| D_{x'_{j}}f_{j}  v'_{j} \| +\log \|D_{y'_{j}}f_{j} w'_{j} \| \right| \\
		&\le \sum_{j= 0 } ^{n-1}   \left| \log \|D_{x_{j}}f_{j}  v_{j} \| - 
		\log \| D_{x'_{j}}f_{j}  v'_{j} \| \right|  \\ &\quad + \sum_{j= 0 } ^{n-1} \left |  \log\| D_{y_{j}}f_{j} w_{j} \| - \log\|  D_{y'_{j}}f_{j} w'_{j} \| \right| \\
		&\le \sum_{j= 0 } ^{n-1} C_1^2  d(v_{j}, v_{j}') 
			+ C_3  d(x_{j}, x_{j}') ^\beta + C_1^2  d(w_j, w'_j)  + C_3d(y_{j}, y_{j}')^\beta \\
%		&\le  4C_1 \rho^\theta (1+C_2) \sum_{j= 0 } ^{n-1} e^{-\kappa \beta \theta n + (1+\alpha)(\gamma - \hat \gamma)n}  \\
		& \le K_6  n \left(e^{\hat \gamma_0^{(n)}\beta}d(x,x')^\beta +  e^{c_0 n}  d(\xi, \xi') ^\beta + e^{\bar \alpha c_0 n}  d(\xi, \xi') ^{\bar \alpha \beta} \right) \\
		& \le K_7   \left(e^{\epsilon n+ \hat \gamma_0^{(n)}\beta}d(x,x')^\beta +  e^{(c_0+\epsilon)  n}  d(\xi, \xi') ^\beta + e^{\bar \alpha (c_0+\epsilon)  n}  d(\xi, \xi') ^{\bar \alpha \beta} \right) \\
		 		& \le K_7  (e^{\bar\theta \kappa_0^{(n)} }	+e^{\bar \theta \kappa_0^{(n)} }		+e^{\bar \alpha \bar \theta \kappa_0^{(n)} })\\
		& \le K_8 (r_{n+1})^{\bar \theta \bar \alpha } \le K_8 (r_{n+1}^{\bar \theta})^{\bar \theta \bar \alpha }
		\end{align*}
Combined with \eqref{eq:jjul}, it follows that $(x,v)\mapsto  \| D_xh(v)\| $   is $
(\bar \theta \bar \alpha  )$-H\"older   on $SW^c(p,1)$.    
%	\note{$n\le e^n$ and $n\le K e^{\bar \alpha n}$}		
\end{proof}

\section{Ergodicity of partially hyperbolic diffeomorphisms}\label{sec:3}

We refer the reader to \cite{MR2630044} for   definitions and complete arguements.  Let $M$ be a compact manifold and for $\beta>0$  let $f\colon M\to M$ be a $C^{1+\beta}$ diffeomorphism.  We assume $f$ admits a partially hyperbolic splitting $E^s(x)\oplus E^c(x) \oplus E^u(x)$; that is there are functions  $$ \mu (x)<\nu(x)<\gamma(x) < \hat \gamma(x) \inv <\hat \nu(x) \inv <\hat \mu(x) \inv $$   with $\hat \nu(x), \nu(x) <1$, and  $\hat \gamma (x)\le \gamma(x)$ such that 
\begin{itemize}
	\item $  \mu(x)   \|v\| \le \|D_x f v\| \le   \nu (x)  $ for all $v\in E^s(x)$
		\item $  \gamma (x)   \|v\| \le \|D_x f v\| \le \hat \gamma (x )\inv  $ for all $v\in E^c(x)$
			\item $ \hat \nu (x)\inv    \|v\| \le \|D_x f v\| \le  \hat  \mu (x)\inv  $ for all $v\in E^u(x)$.
\end{itemize}

\begin{theorem}\label{thm:BW}
Let $f\colon M\to M$ be a volume preserving, essentially accessible, partially hyperbolic $C^{1+\beta} $ diffeomorphism. Let $\bar \theta<\beta$ be such that 
$$\nu(x) \gamma\inv (x) < \mu(x) ^{\bar \theta}, \quad \hat \nu(x) \hat \gamma\inv (x) < \hat \mu(x) ^{\bar \theta}.$$
 Assume $f$ satisfies the strong center bunching hypothesis: for some $0<\theta <\bar \theta$
 with \begin{equation}\label{eq:superbunch}\nu(x) ^\theta \ge \nu(x)^\beta \gamma(x) ^{-\beta},\quad  \hat \nu(x) ^\theta \ge \hat  \nu(x)^\beta \hat \gamma(x) ^{-\beta}\end{equation}
 we have 
$$\max\{\nu(x), \hat \nu(x)\} ^{ \theta} \le \gamma \hat \gamma\inv.$$
Then $f$ is ergodic and has the $K$-property.  
\end{theorem}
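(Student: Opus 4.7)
\textbf{Proof proposal for Theorem \ref{thm:BW}.} The plan is to deduce the theorem directly from the Burns--Wilkinson scheme of \cite{MR2630044} by supplying the one missing ingredient in the $C^{1+\beta}$ setting, namely Lipschitzness of stable holonomies inside center-stable plaques (and symmetrically, of unstable holonomies inside center-unstable plaques). The Hopf-type argument of \cite{MR2630044} reduces ergodicity and the $K$-property of a volume-preserving, essentially accessible, partially hyperbolic diffeomorphism to an absolute continuity / Lebesgue density statement for the stable and unstable foliations, combined with a juliennes geometry argument in the center direction. Inspection of \cite{MR2630044} shows that the $C^2$ hypothesis is invoked only to know that the $W^s$-holonomies between $W^{cu}$-plaques inside a $W^{cs}$-plaque are Lipschitz (hence absolutely continuous with controlled Jacobians); everything else --- accessibility, density of julienne decompositions, the Kolmogorov property via $B$-sets --- works verbatim for $C^{1+\beta}$ dynamics.

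Thus, the task reduces to verifying the hypotheses of Theorem \ref{thm:difflholonomies} (and hence applying Corollary \ref{thm:lipholonomies}) to $f$ in a neighborhood of any orbit. I would do this by fixing $x \in M$ and passing to a sequence of charts along the orbit $\{f^n(x)\}$ (for instance, exponential charts pre-composed with a linear change of basis trivializing the partially hyperbolic splitting $E^s \oplus E^c \oplus E^u$). After rescaling and truncating outside a small ball using a bump function that agrees with the linear part at infinity, this produces a sequence of uniformly $C^{1+\beta}$ maps $f_n : \mathbb R^k \to \mathbb R^k$ with $f_n(0)=0$, $C^1$-close to the block-diagonal linear parts $L_n = \mathrm{diag}(A_n,B_n,C_n)$, and satisfying the uniform bounds required in Section \ref{sec:1}. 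Setting $e^{\kappa'_n} = \nu(f^n(x))$, $e^{\eta'_n} = \mu(f^n(x))$, $e^{\gamma'_n} = \gamma(f^n(x))$, $e^{\hat\gamma'_n} = \hat\gamma(f^n(x))^{-1}$, $e^{\hat\kappa'_n} = \hat\nu(f^n(x))^{-1}$, $e^{\hat\eta'_n} = \hat\mu(f^n(x))^{-1}$, the three hypotheses of Theorem \ref{thm:BW} translate directly as follows: $\nu\gamma^{-1} < \mu^{\bar\theta}$ gives \eqref{eq:holder}; the strong center bunching $\nu^\theta \ge \nu^\beta \gamma^{-\beta}$ gives \eqref{eq:coarsbunch}; and $\max\{\nu,\hat\nu\}^\theta \le \gamma\hat\gamma^{-1}$ gives \eqref{eq:bunch} (together with the symmetric versions for the unstable direction obtained by running the same argument for $f^{-1}$).

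With these verifications in hand, Corollary \ref{thm:lipholonomies} applied to the sequence $f_n$ yields that the stable holonomies of $f$ between transversals inside a $W^{cs}$-plaque are uniformly bi-Lipschitz; applying the same result to $f^{-1}$ yields the analogous statement for unstable holonomies inside $W^{cu}$-plaques. Feeding this into the Burns--Wilkinson mechanism then gives ergodicity and the $K$-property, completing the proof.

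The main obstacle is the bookkeeping in the chart construction: one must ensure that a single choice of exponents $\theta, \bar\theta, \hat\theta, \hat\beta, \alpha, \epsilon'$ (as fixed in Section \ref{sec:1}) works simultaneously for \emph{all} orbits, given only the pointwise bunching hypothesis. This is handled by noting that $\mu,\nu,\gamma,\hat\gamma,\hat\nu,\hat\mu$ are continuous on the compact manifold $M$, so there is a uniform gap in the inequalities of Theorem \ref{thm:BW}; consequently one can choose $\theta<\bar\theta<\beta$ and auxiliary parameters uniformly in $x \in M$, and the globalization/truncation step can be carried out with uniform constants $C_0, C_1$ independent of $x$. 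The remaining accessibility and juliennes arguments are imported verbatim from \cite{MR2630044}.
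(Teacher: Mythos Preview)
Your proposal is correct and follows essentially the same route as the paper: the paper does not give a standalone proof of Theorem~\ref{thm:BW} but simply refers to the Burns--Wilkinson argument in \cite{MR2630044} and supplies the missing Lipschitz holonomy estimate via Theorem~\ref{thm:difflholonomies}/Corollary~\ref{thm:lipholonomies}, exactly as you outline. The one nuance worth making explicit (and which the paper's Remark after the Corollary stresses) is that the globalized-chart construction yields regularity of holonomies along \emph{fake} stable manifolds inside \emph{fake} center-stable manifolds, which is precisely what the juliennes argument of \cite{MR2630044} requires since it does not assume dynamical coherence.
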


Noting that $\theta= \bar \theta \beta$ satisfies \eqref{eq:superbunch}, we have as the following corollary.  

\begin{corollary}
If in Theorem \ref{thm:BW}  we have 
$$\max\{\nu(x), \hat \nu(x)\} ^{ \bar \theta\beta} \le \gamma \hat \gamma\inv$$
then $f$ is ergodic and has the $K$-property.  
\end{corollary}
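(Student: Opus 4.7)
The plan is to derive the corollary as an immediate consequence of Theorem \ref{thm:BW}, applied with the distinguished choice $\theta = \bar\theta \beta$. Once this $\theta$ is shown to satisfy the auxiliary condition \eqref{eq:superbunch}, the strong center bunching inequality demanded by Theorem \ref{thm:BW} reduces verbatim to the hypothesis of the corollary, and the conclusion follows.

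The only verification required is that $\theta = \bar\theta \beta$ satisfies
\begin{equation*}
\nu(x)^\theta \ge \nu(x)^\beta \gamma(x)^{-\beta} \quad \text{and} \quad \hat\nu(x)^\theta \ge \hat\nu(x)^\beta \hat\gamma(x)^{-\beta}.
\end{equation*}
Taking $\beta$-th roots, the first of these is equivalent to $\nu(x)^{\bar\theta} \ge \nu(x)\gamma(x)^{-1}$. I would deduce this from two ingredients inherited from the hypotheses of Theorem \ref{thm:BW}: the H\"older-exponent bound $\nu(x)\gamma(x)^{-1} < \mu(x)^{\bar\theta}$, and the monotonicity $\mu(x)^{\bar\theta} \le \nu(x)^{\bar\theta}$, which holds because $\mu(x) \le \nu(x) < 1$ and $\bar\theta > 0$. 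The second inequality is handled identically using $\hat\mu, \hat\nu, \hat\gamma$.

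With \eqref{eq:superbunch} in hand for this value of $\theta$, Theorem \ref{thm:BW} applies, and its remaining bunching hypothesis $\max\{\nu(x),\hat\nu(x)\}^\theta \le \gamma \hat\gamma\inv$ is precisely the assumption of the corollary. Ergodicity and the $K$-property follow. There is no substantive obstacle here; the corollary is essentially a one-line specialization of Theorem \ref{thm:BW} to a canonical admissible exponent, and the only thing to check is the elementary monotonicity above.
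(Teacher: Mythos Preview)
Your proposal is correct and follows exactly the paper's approach: the paper simply notes that $\theta=\bar\theta\beta$ satisfies \eqref{eq:superbunch} and then applies Theorem~\ref{thm:BW}, without writing out the monotonicity verification you supply. Your added detail (using $\nu\gamma^{-1}<\mu^{\bar\theta}\le\nu^{\bar\theta}$ from $\mu\le\nu$) is a clean justification of precisely that one-line remark.
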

\begin{remark}
Theorem \ref{thm:difflholonomiesspecial} establishes that the smoothness of unstable holonomies inside center-unstable manifolds for a choice of the globalized dynamics.  In the language of \cite{MR2630044}, this establishes the smoothness of holonomy maps by \emph{fake stable manifolds} inside of \emph{fake center-stable manifolds}.  

In the case that the partially hyperbolic diffeomorphism $f\colon M\to M$ is dynamically coherent, one could adapt the proof of Theorem \ref{thm:difflholonomiesspecial} to show the that the holonomy maps by true stable manifolds inside  true center-stable manifolds is $C^{1+\text{H\"older}}$.  
\end{remark}

\section{Ledrappier--Young entropy formula}\label{sec:4}

For  $\beta>0$, let  $f\colon M\to M$ be a $C^{1+\beta}$ diffeomorphism of a compact $k$-dimensional manifold $M$.  Let $\mu$ be an ergodic, $f$-invariant Borel probability measure.
We have the following generalizations of the main results of \cite{MR819556, MR819557}.  
\begin{theorem}[{\cite[Theorem A] {MR819557}}]

 $h_\mu(f)$ satisfies the Pesin entropy formula if and only if $\mu$ has the SRB property.  
\end{theorem}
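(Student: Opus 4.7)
The easy implication (SRB $\Rightarrow$ Pesin formula) is Pesin's entropy formula, which is standard in the $C^{1+\beta}$ setting and requires no input from this paper. The plan is thus to focus on the converse, following the strategy of Ledrappier--Young \cite{MR819556} whose only $C^2$-dependent step is the Lipschitzness of unstable holonomies inside center-unstable manifolds; this step will be supplied by Theorem \ref{thm:difflholonomies}.

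First I would pass to a Pesin block $\Lambda_\ell$ of positive $\mu$-measure on which the Oseledets splitting $T_xM = E^s(x)\oplus E^c(x)\oplus E^u(x)$ is continuous with uniform constants: $E^u$ is the sum of Oseledets subspaces with strictly positive Lyapunov exponents, $E^s$ is the sum with exponents less than some $-c<0$, and $E^c$ collects the remaining exponents lying in $[-c,0]$. Along each $f$-orbit meeting $\Lambda_\ell$, I would use Pesin/Lyapunov charts to conjugate the local dynamics to a sequence of maps near $0\in \R^k=\R^s\oplus\R^c\oplus\R^u$, then truncate and extend outside the unit ball by the linearization $L_n$ to obtain a sequence $f_n\colon \R^k\to \R^k$ satisfying the uniform hypotheses of Section \ref{sec:1}. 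The exponents $\kappa'_n,\gamma'_n,\hat\gamma'_n,\hat\kappa'_n,\hat\eta'_n$ are read off the Oseledets data along the orbit.

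The crucial check is the bunching condition \eqref{eq:bunch} for the holonomy I need, namely the $f$-unstable foliation inside $f$-center-unstable leaves, which corresponds to applying Theorem \ref{thm:difflholonomies} to $f^{-1}$ (so that $f$-unstable becomes $f^{-1}$-stable). In that setup the gap $\hat\gamma_n-\gamma_n$ is bounded by the diameter of the $f$-center Lyapunov spectrum, i.e.\ by a quantity comparable to $c$, while $|\kappa_n|$ is bounded below by $c$ minus a subexponential error. Choosing $c$ sufficiently small, and refining the Oseledets filtration further to split clusters of nearby exponents if necessary, the ratio $(\hat\gamma_n-\gamma_n)/(-\kappa_n)$ can be made arbitrarily small, so that \eqref{eq:bunch} together with \eqref{eq:holder}, \eqref{eq:coarsbunch}, and \eqref{eq:3} all hold for some admissible $\theta$. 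Theorem \ref{thm:difflholonomies} then produces uniformly $C^{1+\text{H\"older}}$, and a fortiori bi-Lipschitz, holonomies of the $f$-unstable foliation inside the $f$-center-unstable leaves on $\Lambda_\ell$.

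With this holonomy estimate in hand, the remainder of the argument is exactly \cite{MR819556}: the Lipschitz holonomy along $E^c$ is used to transport conditional measures without distortion, showing that the $\mu$-entropy is \emph{carried entirely by the unstable manifolds}; Ledrappier's earlier argument \cite{MR743818}, already valid in the $C^{1+\beta}$ category for hyperbolic measures, then upgrades this to absolute continuity of unstable conditionals on $\Lambda_\ell$, i.e.\ the SRB property, which one finally extends globally by taking $\ell\to\infty$. The main obstacle I anticipate is not conceptual but technical: verifying that the Pesin-chart globalization yields a sequence $f_n$ with the uniform $C^{1+\beta}$-bounds demanded by Section \ref{sec:1}, since Pesin charts only give subexponentially slow control of the relevant constants along orbits. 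A careful choice of $\Lambda_\ell$ and of the truncation scale makes this routine.
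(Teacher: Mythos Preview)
Your approach matches the paper's: reduce to the globalized setting of Section~\ref{sec:1} via Lyapunov charts, apply Corollary~\ref{thm:lipholonomies} to obtain Lipschitz unstable holonomies inside the center-unstable sets $S^{cu}_{\delta,x}$ of \cite{MR819556}, and then run the Ledrappier--Young argument verbatim. Two points need correction, however.

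First, your bunching verification is miscalibrated. In the Ledrappier--Young setting the center bundle is exactly the \emph{zero} Lyapunov subspace; there is no need for a threshold $c$. In charts one then has $\hat\gamma_n-\gamma_n = O(\epsilon)$, while $|\kappa_n|$ (the contraction rate of the $f$-unstable direction under $f^{-1}$) is bounded below by the \emph{smallest positive} Lyapunov exponent of $f$ minus $O(\epsilon)$, not by $c$. As you wrote it, $\hat\gamma_n-\gamma_n\sim c$ and $|\kappa_n|\gtrsim c$ give a ratio of order $1$, which does \emph{not} verify \eqref{eq:bunch}. With the correct bounds the ratio is $O(\epsilon)$ and all of \eqref{eq:holder}--\eqref{eq:3} follow immediately for small $\epsilon$; no refinement of the Oseledets filtration is needed here.

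Second, the uniform $C^{1+\beta}$ bounds are not achieved by restricting to a Pesin block $\Lambda_\ell$---orbits leave $\Lambda_\ell$, so this cannot help---nor merely by choosing a truncation scale. The paper's device (Section~\ref{sec:4}) is to \emph{rescale} the standard Lyapunov charts, setting $\Phi_x(v)=\hat\Phi_x(\hat\ell(x)^{-1}v)$; a direct computation then converts the slowly growing H\"older constant $\hat\epsilon\,\hat\ell(x)^\beta$ of $D\hat f_x$ into the uniform bound $\Hol(D\tilde f_x)\le\epsilon$. After globalizing, one checks that the Ledrappier--Young center-unstable \emph{sets} $S^{cu}_{\delta,x}$ lie inside the fake center-unstable \emph{manifolds} $\tilde W^{cu}_x(0)$, so Corollary~\ref{thm:lipholonomies} applies and \cite[(4.2), Proposition~5.1]{MR819556} go through.
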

\begin{theorem}[{\cite[Theorem C'] {MR819557}}]
For any measure $\mu$, the entropy formula of \cite[Theorem C'] {MR819557} remains valid for $h_\mu(f)$.
\end{theorem}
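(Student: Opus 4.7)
The plan is to follow the Ledrappier--Young argument from \cite{MR819557} essentially verbatim, replacing their single use of the $C^2$ hypothesis (establishing Lipschitz regularity of the intermediate unstable holonomies inside the total unstable manifold and, for the entropy formula of \cite{MR819556}, of the unstable holonomies inside the center-unstable set) by the conclusion of Theorem \ref{thm:difflholonomies} applied to appropriately linearized dynamics on Pesin blocks. More precisely, fix an ergodic $f$-invariant measure $\mu$ with Lyapunov exponents $\lambda_1>\dots>\lambda_r$ (possibly including zero and negative values) and, for each index $i$, consider the ordered splitting $TM = E^s_i \oplus E^c_i \oplus E^u_i$ where $E^u_i$ corresponds to exponents strictly greater than $\lambda_i$, $E^c_i$ corresponds to the single exponent $\lambda_i$, and $E^s_i$ corresponds to the remaining exponents. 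Build Pesin charts along a $\mu$-typical orbit so that $f$ is conjugated to a sequence $f_n\colon\R^k\to\R^k$ with $f_n(0)=0$, uniformly $C^{1+\beta}$ estimates, and linearization $L_n$ of the desired block form.

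The next step is to verify the bunching conditions of Section \ref{sec:1} for the chosen splitting. Because $E^c_i$ corresponds to the \emph{single} exponent $\lambda_i$, the subexponential gap $\hat\gamma_n-\gamma_n$ can be made arbitrarily small uniformly by shrinking the Pesin block constant $\epsilon'$; with $\kappa_n$ bounded away from zero by a definite gap between consecutive Lyapunov exponents, conditions \eqref{eq:holder}, \eqref{eq:coarsbunch}, and \eqref{eq:bunch} hold for some $\theta\le\bar\theta<\beta$ chosen once and for all. Theorem \ref{thm:difflholonomies} (or Corollary \ref{thm:lipholonomies}) then gives uniformly Lipschitz (indeed $C^{1+\text{H\"older}}$) $W^s_n$-holonomies inside $W^{cs}_n$ for this globalized system. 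By reversing time one obtains the same statement for unstable holonomies inside center-unstable sets, which is exactly the estimate used in the crucial step of \cite[\S4]{MR819556} and for the intermediate unstable holonomies used in \cite[\S\S7--9]{MR819557}. For hyperbolic measures, this reduces to the Lipschitz estimate already established in \cite[Appendix]{MR1709302}; our contribution is that \emph{zero} Lyapunov exponents are allowed as a center block.

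With the holonomy regularity in hand, the remainder of the Ledrappier--Young proof proceeds unchanged: one defines the partitions subordinate to the filtration of unstable manifolds, computes transverse dimensions $\delta_i$ of conditional measures along the intermediate unstable leaves $W^i$, expresses $h_\mu(f)$ as $\sum_i(\delta_i-\delta_{i-1})\lambda_i$, and obtains the SRB/Pesin criterion as the case $\delta_i=\dim W^i$. The only non-formal ingredient in that machinery that is sensitive to smoothness is the absolute continuity and bi-Lipschitz property of the relevant holonomies, and those are now available in the $C^{1+\beta}$ category.

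The main obstacle in this plan is of a bookkeeping rather than a conceptual nature: one must verify that the abstract setup of Section \ref{sec:1}, formulated for a globalized sequence $f_n$ on $\R^k$ with matching $L_n$, can genuinely be realized from the Pesin-chart dynamics on each tempered block, uniformly with respect to the Lyapunov block constants, and that the center direction can be chosen to isolate each individual exponent (rather than groups of them) so that the bunching \eqref{eq:bunch} is automatic. Once this reduction to the hypotheses of Theorem \ref{thm:difflholonomies} is made precise, there is nothing further to prove beyond the arguments already written in \cite{MR819556, MR819557} and \cite[Appendix]{MR1709302}.
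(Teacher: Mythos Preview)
Your proposal is correct and follows essentially the same route as the paper: reduce to the abstract setup of Section~\ref{sec:1} via Pesin charts, observe that isolating a single Lyapunov exponent as the center makes $\hat\gamma_n-\gamma_n$ arbitrarily small so that the bunching conditions \eqref{eq:holder}--\eqref{eq:bunch} are automatic, apply Corollary~\ref{thm:lipholonomies}, and then run \cite{MR819556,MR819557} unchanged. The one point the paper makes explicit that you flag only as ``bookkeeping'' is the rescaling $\Phi_x(v)=\hat\Phi_x(\hat\ell(x)^{-1}v)$ of the standard Lyapunov charts, which converts the tempered H\"older bound $\Hol(D\hat f_x)\le\hat\epsilon\,\hat\ell(x)^\beta$ into the uniform bound $\Hol(D\tilde f_x)\le\epsilon$ required by the hypotheses of Section~\ref{sec:1}; this is exactly the uniformity issue you identify, and the paper's Section~\ref{sec:4} carries it out.
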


\subsection{Lyapunov charts}
Given  $M$, $f\colon M\to M$ and $\mu$ as above, let  $\Lambda$ denote the set of \emph{regular points}, $T_x M = \oplus E^i(x)$ the Oseledec's splitting, and $\lambda_1>\lambda_2 \dots >\lambda_p$ denote the Lyapunov exponents.  
%\section{Standard Lyapunov charts}\label{Sec:charts}
%We begin with some technical preparations.  
%Recall the Lyapunov exponents \eqref{eq:LE}.  %Let $\lambda_1>\lambda_2 \dots >\lambda_r>0>\dots >\lambda_p$ be the Lyapunov exponents of $DF$.  
 % We  specify a  norm on $\R^d$. 
  Fix a decomposition $\R^k = \oplus \R^i$ where $\dim \R^i = m_i$ is the dimension of $E_i(x)$ for  $x\in \Lambda$.  Define the norm $\|\cdot\|'$ on $\R^d$ as follows: writing $v= \sum v_i$, set  $\|v\|'= \max\{ \|v_i\|\}$ where $\|v_i\|$ restricts to the standard norm on each $\R^i$.
%Although it may not always be explicitly stated, the domain of all embeddings below is equipped with the norm $\|\cdot \|$.  
Let $\lambda_0= \max\{|\lambda_1|, | \lambda_p|\}$.  % Write $B(0,r):= \{v\in \R^n : \|v\|<r\}$.  

%\subsection{Standard Lyapunov charts}% and charts} %adapted to admissible foliations}
\def\invco{^{(-1)}}
Fix a background Riemannian metric  and induced distance on $M$.  We have the following standard construction.  (See for example \cite[Appendix]{MR819556}, \cite[\S 2]{MR730270}.)
\begin{proposition}\label{lplplplp} %[Standard Lyapunov charts]\label{prop:lyapcharts}
For every sufficiently small $0<\hat \epsilon<1$ there is a % $\hat \epsilon$-slowly growing
measurable function $\hat \ell\colon \Lambda\to [1,\infty)$ and a measurable family of  $C^\infty$ embeddings  $\{\hat \Phi_x, x\in \Lambda\}$  with the following properties: %e have 
\begin{enumlemma}
	\item $\hat \Phi_x\colon B(0, \hat \ell(x) \inv)\to M$ is a $C^\infty$ diffeomorphism with \label{LC:1} $\hat \Phi_x(0) = x$;
	\item \label{LC:2}$D_0\hat  \Phi_x\R^{i}= E^i(x)$;
	\item \label{LC:31} the map $\hat  f_x \colon B(0,  e^{-\lambda_0 - 3 \hat \epsilon}\hat \ell(x) \inv) \to  B(0, \hat \ell(f(x)) \inv) $ %and 
%	$\td f_x \invco \colon \R^d( e^{\lambda_0 - 3 \hat \epsilon}\ell(x) \inv) \to  \R^d( \ell(f\invco(x)) \inv) $
	 given by $$\hat  f_x(v) =\hat  \Phi_{f(x) }\inv \circ f\circ\hat  \Phi_x(v)$$%,\quad \td f_x\invco (v) = \Phi_{f\inv (x) }\inv \circ f\inv \circ \Phi_x(v)$$
	is  well-defined;
	\item \label{LC:41}$D_0\hat  f_x\R^i= \R^i$, %$D_0\td f_x\invco \R^i= \R^i$ 
	and for $v\in \R^i$, $$ e^{\lambda_i- \hat \epsilon} \|v\|' \le \|D_0\hat  f_x v\|'\le e^{\lambda_i + \hat \epsilon}\|v\|';$$ 
%$$ e^{-\lambda_i- \hat \epsilon} \|v\| \le \|D_0\td f_x \invco v\|\le e^{-\lambda_i + \hat \epsilon}\|v\|;$$ 
	\item \label{LC:51} $\Hol (D\hat  f_x)\le \hat \epsilon( \hat \ell(x))^{\beta}$ whence $\lip\left (\hat  f_x - D_0\hat  f_x\right) \le \hat \epsilon$; %\footnote{$\Hol (D\hat  f_x)\le \hat \epsilon( \ell(x)^{\beta})$ can be arranged by taking $\ell(x) = \hat \ell(x)^{1/\beta}$ for $\hat \ell(x)$ an $(\hat \epsilon \beta)$-slowing growing}
\item similar statements to \ref{LC:31},\ref{LC:41} and \ref{LC:51} hold for $f\inv$;
	\item \label{LC:7}there is a uniform $k_0$ so that 
	$\hat \ell(x)\inv \le \lip(\hat  \Phi_x)\le k_0$;%
	%$\ell(x)\inv \le \\|L_x\| \le1$ and $\ell(x)\inv \le \lip (\Phi_x )\le \ell(x)$.
	\item $e^{-\hat \epsilon} \le \dfrac{\hat \ell(f(x))}{\hat \ell(x)} \le e^{\hat \epsilon}$.
%	\item\label{LC:8} $\phi_x\inv\circ \Phi_x$ is linear with $\ell\inv \le \|\phi_x\inv\circ \Phi_x\|\le 1$. 
\end{enumlemma}

\end{proposition}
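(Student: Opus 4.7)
\textbf{Proof proposal for Proposition \ref{lplplplp}.}

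The plan is to build the charts in the classical Pesin/Oseledec manner: first construct a \emph{Lyapunov inner product} on $T_xM$ adapted to the Oseledec splitting, then use it to select a linear isomorphism $A_x\colon (\R^k,\|\cdot\|')\to (T_xM,\langle\cdot,\cdot\rangle'_x)$, and finally set $\hat\Phi_x=\exp_x\circ A_x$ where $\exp_x$ is the Riemannian exponential of the background metric. The tempered function $\hat\ell(x)$ will simultaneously encode the distortion between the Lyapunov norm and the background norm, the size of the ball on which $\exp_x$ is a controlled diffeomorphism, and the H\"older constants of the pulled-back dynamics.

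Concretely, for each $i$ I would define
\[
\langle u,v\rangle'_{x,i}=\sum_{n\in\Z} e^{-2n\lambda_i-4|n|\hat\epsilon'}\langle D_xf^n u,D_xf^n v\rangle_{f^n(x)}
\]
(truncated to $n\ge 0$ for stable blocks and $n\le 0$ for unstable blocks, with a two-sided sum on the zero block so that it converges), where $\hat\epsilon'\ll\hat\epsilon$ is chosen so the series converges for $x\in\Lambda$. Declaring distinct $E^i$-blocks to be orthogonal gives $\langle\cdot,\cdot\rangle'_x$. The standard telescoping computation then yields $e^{\lambda_i-\hat\epsilon}\|v\|'\le \|D_xfv\|'\le e^{\lambda_i+\hat\epsilon}\|v\|'$ on each block. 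Letting $A_x$ be a $\|\cdot\|'$-isometry from $\R^k$ onto $(T_xM,\langle\cdot,\cdot\rangle'_x)$ sending $\R^i$ to $E^i(x)$, properties \ref{LC:1}, \ref{LC:2}, and the linear part of \ref{LC:41} follow immediately from $D_0\exp_x=\mathrm{id}$ and $D_0\hat f_x=A_{f(x)}\inv\circ D_xf\circ A_x$.

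For \ref{LC:31}, \ref{LC:51}, \ref{LC:7}, and the temperedness of $\hat\ell$, I would define
\[
\hat\ell(x)=C\cdot \max\bigl\{\|A_x\inv\|,\,\|A_x\|,\,1\bigr\}^{1/\beta}\cdot\max\bigl\{\mathrm{inj}(x)\inv,1\bigr\}
\]
with $C=C(\hat\epsilon)$ large. Since the Lyapunov norm satisfies $\|A_{f(x)}\|/\|A_x\|=e^{o(n)}$ along orbits by the Oseledec theorem, Luzin-type approximation makes $\hat\ell$ measurable and gives the tempered bound $e^{-\hat\epsilon}\le\hat\ell\circ f/\hat\ell\le e^{\hat\epsilon}$ after possibly passing to an $\hat\epsilon$-tempered majorant (a standard trick: replace $\hat\ell$ by $\sup_n e^{-\hat\epsilon|n|}\hat\ell\circ f^n$). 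The Lipschitz bound \ref{LC:7} on $\hat\Phi_x$ follows from $\|\mathrm{d}\exp_x\|\le k_0$ on a definite ball and $\|A_x\|\le \hat\ell(x)$; invertibility of $\hat f_x$ on the stated ball comes from comparing $e^{-\lambda_0-3\hat\epsilon}\hat\ell(x)\inv$ with $\hat\ell(f(x))\inv$ via the temperedness.

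The main obstacle is estimate \ref{LC:51}. Writing $\hat f_x=A_{f(x)}\inv\circ\exp_{f(x)}\inv\circ f\circ\exp_x\circ A_x$, the H\"older norm of $D\hat f_x$ is amplified by the operator norms of $A_x$, $A_{f(x)}\inv$, and by the local $C^{1+\beta}$ bounds for $\exp_x$, $\exp_{f(x)}\inv$, and $f$. One gets a bound of order $\|A_x\|^{1+\beta}\|A_{f(x)}\inv\|\cdot\Hol(Df)$, and the job of $\hat\ell(x)$ is to swallow these factors so that the nonlinear remainder $\hat f_x-D_0\hat f_x$ is $\hat\epsilon$-Lipschitz on the ball of radius $\hat\ell(x)\inv$. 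Choosing the exponent $1/\beta$ in the definition of $\hat\ell(x)$ is precisely what makes $\Hol(D\hat f_x)\le\hat\epsilon\hat\ell(x)^\beta$ on the relevant scale; combined with the linear rate on the ball, integrating yields $\lip(\hat f_x-D_0\hat f_x)\le\hat\epsilon$. The argument for $f\inv$ is symmetric, establishing the remaining claim.
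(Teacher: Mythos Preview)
The paper does not actually prove Proposition~\ref{lplplplp}; it merely records it as ``the following standard construction'' and cites \cite[Appendix]{MR819556} and \cite[\S 2]{MR730270}. Your proposal is a correct sketch of precisely that classical Pesin construction---Lyapunov inner product adapted to the Oseledec splitting, chart $\hat\Phi_x=\exp_x\circ A_x$, and an $\hat\epsilon$-tempered radius function $\hat\ell$ absorbing the distortion between the Lyapunov and Riemannian norms---so your approach is essentially the one the paper defers to.
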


Given $0< \hat \epsilon<1$ and corresponding function $\hat \ell\colon \Lambda\to [1, \infty) $ as in Proposition \ref{lplplplp}
%, set $$\ell(x) = \ell(x) ^{1/\beta}.$$ As $\beta\le 1$ we have  $\ell(x) \le \ell(x).$
%D
define new charts $  \Phi_x \colon B(0, 1) \to M$ by rescaling: $$  \Phi_x(v) := \hat  \Phi_x(\hat  \ell(x) \inv v).$$
%To make the next proposition useful  take $\epsilon <\beta/100$.  
We check with $\epsilon = 4\hat \epsilon$ and $\ell(x) = (\hat \ell(x))^2$ 
%\begin{proposition}\label{prop:uniform charts}
%We have
  that for every $x\in \Lambda$
\begin{enum2}
	\item \label{LC:1} $  \Phi_x\colon B(0, 1)\to M$ is a $C^\infty$ diffeomorphism with \label{LC:1} $  \Phi_x(0) = x$;
	\item \label{LC:2}$D_0   \Phi_x\R^{i}= E^i(x)$;
	\item \label{LC:3}the map $\td f_x \colon B(0, e^{-\lambda_0 - 2  \epsilon} ) \to B(0, 1) $ %and 
%	$\td f_x \invco \colon \R^d( e^{\lambda_0 - 3  \epsilon}\ell(x) \inv) \to  \R^d( \ell(f\invco(x)) \inv) $
	 given by $$\td f_x(v) :=   \Phi_{f(x) }\inv \circ f\circ   \Phi_x(v) = 
	\hat   \ell(f(x))  \left(\hat  f_x \left(\hat \ell(x) \inv v\right)\right) $$
	 %$$%,\quad \td f_x\invco (v) = \Phi_{f\inv (x) }\inv \circ f\inv \circ \Phi_x(v)$$
	is  well-defined;
	\item \label{LC:4} $D_0\hat  f_x\R^i= \R^i$, %$D_0\td f_x\invco \R^i= \R^i$ 
	and for $v\in \R^i$, $$ e^{\lambda_i-   \epsilon} \|v\|' \le \|D_0\td  f_x v\|'\le e^{\lambda_i +    \epsilon}\|v\|';$$ 
%$$ e^{-\lambda_i-  \epsilon} \|v\| \le \|D_0\td f_x \invco v\|\le e^{-\lambda_i +  \epsilon}\|v\|;$$ 
	\item \label{LC:5} $\Hol (D\td f_x)\le   \epsilon$ whence $\lip(\td f_x - D_0\td f_x) \le    \epsilon$;
\item similar statements to \ref{LC:3},\ref{LC:4} and \ref{LC:5} hold for $f\inv$;
		\item \label{LC:7}there is a uniform $k_0$ so that 
	$\ell(x)\inv  \le \lip(\td  \Phi_x)\le k_0;$%
	%$\ell(x)\inv \le \\|L_x\| \le1$ and $\ell(x)\inv \le \lip (\Phi_x )\le \ell(x)$.
	\item\label{LC:8}  $e^{-  \epsilon} \le \dfrac{\ell(f(x))}{\ell(x)}\le  e^{    \epsilon}$.
	
\end{enum2}
%\end{proposition}
Indeed, \ref{LC:1}, \ref{LC:2}, \ref{LC:3}, \ref{LC:7}, and \ref{LC:8} follow from construction.
For \ref{LC:4},  and \ref{LC:5}, note that for $u\in B(0,1)$, and $\xi\in \R^k$ with $\|\xi\|'= 1$, 
	 $$D_u\td  f_x (\xi) = \hat \ell(f(x)) D_{\hat \ell(x) \inv u }\hat f_x (\hat \ell (x) \inv\xi  )= \dfrac{\hat \ell(f(x))}{\hat \ell(x)}D_{\hat \ell(x) \inv u }\hat  f_x(\xi)  $$
hence 
$$D_0\td  f_x (\xi) = \dfrac{\hat \ell(f(x))}{\hat \ell(x)} D_0 \hat  f_x(\xi)$$
and 
\begin{align*}
\|D_u\td f_x(\xi) - D_v\td f_x(\xi)\|'
&	 = \dfrac{\hat \ell(f(x))}{\hat \ell(x)}\|D_{\hat \ell(x) \inv u }\hat f_x (\xi  )- 
D_{\hat \ell(x) \inv v }\hat f_x (\xi  )\|'\\
&	 \le  \dfrac{\hat \ell(f(x))}{\hat \ell(x)}\Hol(D\hat f_x ) {\|\hat \ell(x) \inv u - \hat \ell(x) \inv v\|'}^\beta \\
&	 \le  \dfrac{\hat \ell(f(x))}{\hat \ell(x)}\hat \epsilon \hat \ell(x) ^\beta \left(\hat \ell(x) \inv \right)^\beta {\| u - v\|'}^\beta \\
%&	 =  \dfrac{\hat \ell(f(x))}{\hat \ell(x)}\epsilon \hat \ell(x) \left(\hat \ell(x) ^{-1/\beta} \right)^\beta \| u - v\|^\beta \\
&	 =  \dfrac{\hat \ell(f(x))}{\hat \ell(x)}\hat \epsilon {\| u - v\|'}^\beta.  % \qedhere
\end{align*}
%\end{proof}

%Taking $\epsilon$ sufficiently small, the charts in Proposition \ref{prop:uniform charts} have as good   properties as the standard ones plus  uniform radii  and uniform control on the $(1+\beta)$ norm.  

%We have the following.  
The family of maps $\{\Phi_x, x\in \Lambda\}$ is called a family of $(\epsilon,\ell)$ charts.  
Fix   a suitable bump function on $\R^k$ supported inside the unit ball in the $\|\cdot \|'$-norm.    Then there is  a constant $C_1$  such that for all sufficiently small $\epsilon>0$, for any family of  $(\epsilon,\ell)$-charts  $\Phi_x$ as above, with $\td f_x$ as in \ref{LC:3} we may 
% Multiplying by a suitable bump function and rescaling our charts, for suitably small $\delta>0$ we can 
extend the maps $\td f_x$ to  $F_x\colon \R^k\to \R^k$ so that  for every $x\in \Lambda$ 
\begin{enumerate}
\item $F_x (u)= \td f_x (u)$ for all $u$ with $\|u\|\le \frac 1 2$; %.  nside the $\delta$-ball;
\item  $\Lip _{\|\cdot\|'}(F_x- D_0\td f_x)<\epsilon$;
\item  $\Lip _{\|\cdot\|'}(F_x\inv - (D_0\td f_x)\inv)<\epsilon$;
\item $\hol_{\|\cdot\|'} (DF_x ) \le C_1$;  %\hfill {\red check still epsilon small through bump function}
\item $\hol _{\|\cdot\|'}(DF_x ) \le C_1$.  
\end{enumerate}
Furthermore, taking $\epsilon>0$ sufficiently small we can ensure all relevant estimates remain true in the  Euclidean norm $\|\cdot\|$.

Given sufficiently small $\epsilon>0$,  fix a family of $(\epsilon,\ell)$-charts $\{ \Phi_x : x\in \Lambda\}$ as above.  
Let $0<\delta<1$ be a reduction factor. 
For $x\in \Lambda$, let 
$$S^{cu}_{\delta,x}:= \{ y\in \R^k: \|  \Phi_{f^{-n}(x)}\inv \circ f^{-n} \circ   \Phi_x (y)\|<\delta \text{ for all $n\ge 0$}\}.$$
For $x\in \Lambda$,  $\star\in \{s,u,c,su,cu\}$, and $v\in \R^k$ let $\td W^\star_x(v)$ be the corresponding ``fake'' manifold through the  point $v$ constructed in Proposition \ref{prop:HPS} using the sequence of globalizations $F_x$ along the orbit $f^j(x)$. 

From the dynamics inside charts we obtain the following.
\begin{claim} For all sufficiently small $\delta>0$ we have   %and for any family of $(\epsilon,\ell)$-charts
 $S^{cu}_{\delta,x} \subset W^{cu}_x (0).$
\end{claim}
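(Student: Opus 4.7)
The plan is to identify $y \in S^{cu}_{\delta,x}$ with the unique intersection of its fake stable leaf and $\widetilde W^{cu}_x(0)$, and then rule out any deviation from $\widetilde W^{cu}_x(0)$ by using the backward expansion of the stable direction.

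Shrink $\delta$ so that $\delta<1/2$; then for any $y\in S^{cu}_{\delta,x}$ the backward chart iterates $y_{-n}:=\Phi_{f^{-n}(x)}^{-1}\circ f^{-n}\circ\Phi_x(y)$ all lie in $B(0,1/2)$, the region on which each globalized map $F_{f^j(x)}$ coincides with the chart conjugate $\tilde f_{f^j(x)}$. In particular, the $y_{-n}$ are also the backward iterates of $y$ under the globalized dynamics, so Proposition~\ref{prop:HPS} (applied to the bi-infinite sequence $\{F_{f^j(x)}\}_{j\in\Z}$) governs fake invariant manifolds along the orbit of $0$, which is fixed under the globalized dynamics. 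In particular each $\widetilde W^\star_x(0)$ is a global Lipschitz graph (over $\R^\star$ or its complement) with slope bounded by $1/3$.

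Since the partition into $\widetilde W^s_x(\cdot)$ foliates $\R^k$, the fake stable leaf $\widetilde W^s_x(y)$ is a global graph over $\R^s$, while $\widetilde W^{cu}_x(0)$ is a global graph over $\R^{cu}$. Transversality of these complementary graphs yields a unique intersection point
\[
\hat y := \widetilde W^s_x(y)\cap \widetilde W^{cu}_x(0),
\]
with $\|\hat y\|$ comparable to $\|y\|$. If $\hat y=y$, the claim holds. Assume for contradiction $\hat y\neq y$. Because $y,\hat y$ lie on a common fake stable leaf, iterating Proposition~\ref{prop:HPS}(5) backwards gives
\[
d(y_{-n},\hat y_{-n})\ \ge\ e^{-(\kappa_{-1}+\cdots+\kappa_{-n})}\, d(y,\hat y),
\]
which grows exponentially because $\sup_j \kappa_j<0$. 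Meanwhile $\hat y\in \widetilde W^{cu}_x(0)$, and invariance places $\hat y_{-n}\in \widetilde W^{cu}_{f^{-n}(x)}(0)$, so Proposition~\ref{prop:HPS}(2) gives
\[
\|\hat y_{-n}\|\ \le\ e^{-(\gamma_{-1}+\cdots+\gamma_{-n})}\,\|\hat y\|.
\]
Since $\inf_j(\gamma_j-\kappa_j)>0$, the separation $d(y_{-n},\hat y_{-n})$ exponentially dominates $\|\hat y_{-n}\|$, so for all sufficiently large $n$ the triangle inequality yields $\|y_{-n}\|\ge \tfrac12 d(y_{-n},\hat y_{-n})\to \infty$, contradicting $\|y_{-n}\|<\delta$.

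The only delicate point is Step~1: verifying that the chart-backward iterates $y_{-n}$ agree with backward iterates under the globalized dynamics, so that Proposition~\ref{prop:HPS} can be legitimately invoked along the whole orbit. This is handled by the choice $\delta<1/2$ together with the stipulation that $F_x\equiv\tilde f_x$ on $B(0,1/2)$ and the invariance of the condition $\|y_{-n}\|<\delta$ under further backward iteration. A secondary check is that the two norms $\|\cdot\|$ and $\|\cdot\|'$ are uniformly comparable, which is immediate from the chart construction.
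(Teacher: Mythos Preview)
Your argument is correct. The paper does not actually supply a proof of this claim; it merely asserts that the inclusion follows ``from the dynamics inside charts.'' Your proof is precisely the standard way to justify that phrase: use the global graph structure of the fake stable and center-unstable manifolds (Proposition~\ref{prop:HPS}(7)--(8)) to produce the stable projection $\hat y=\widetilde W^s_x(y)\cap\widetilde W^{cu}_x(0)$, then exploit the gap $\inf_j(\gamma_j-\kappa_j)>0$ to see that backward separation along $\widetilde W^s$ outpaces the backward drift of $\hat y$ along $\widetilde W^{cu}$, forcing $\|y_{-n}\|\to\infty$ unless $y=\hat y$. The reduction to $\delta<1/2$ so that chart backward iterates coincide with globalized backward iterates is exactly the point that needs care, and you handle it correctly: since each $y_{-j}\in B(0,1/2)$ and $F_{f^{-j}(x)}=\tilde f_{f^{-j}(x)}$ there, the relation $F_{f^{-j}(x)}(y_{-j})=y_{-(j-1)}$ holds, so the two backward orbits agree. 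Note that no such restriction is needed for the $\hat y_{-n}$, since the estimates of Proposition~\ref{prop:HPS} are stated for the globalized dynamics on all of $\R^k$.
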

From Corollary \ref{thm:lipholonomies}, it follows (for sufficiently small $\delta>0$)  that the Lipschitz property of holonomies along unstable manifolds inside the center-unstable sets $S^{cu}_{\delta,x}$ 
derived in  \cite[(4.2)]{MR819556} for $C^2$ maps holds for $C^{1+\beta}$ maps.    We similarly obtain that the holonomies along ``fake'' $W^i$ manifolds is Lipschitz inside $W^{i+1}$ manifolds.  This replaces   the Lipschitz estimates \cite[Lemma 8.2.5, (8.4)]{MR819557}.  
%for  $C^{1+\beta}$ maps (though the relevant estimate to \cite[Lemma 8.2.5]{MR819557} 
%can be derived from \cite[Appendix]{MR1709302}) 
In particular, \cite[Proposition 5.1]{MR819556} and  \cite[Proposition 11.2]{MR819557}  remain valid  for $C^{1+\beta}$ diffeomorphisms.  It follows that the results of \cite{MR819556} and \cite{MR819557} hold for $C^{1+\beta}$ diffeomorphisms.

\end{document}